\documentclass[a4paper,11pt,reqno]{amsart}
\usepackage{amsthm}
\usepackage{amsmath}
\usepackage{enumerate}
\usepackage{amsfonts}
\usepackage{amssymb}
\usepackage{fullpage}
\usepackage{amsmath,amscd}
\usepackage{stmaryrd}
\usepackage{graphicx}
\usepackage{array}
\usepackage{amsmath,amssymb,amsfonts,dsfont}
\usepackage[utf8]{inputenc} 
\usepackage[english]{babel}
\usepackage[T1]{fontenc} 
\usepackage{graphicx}
\usepackage{float}
\usepackage{pdfpages}
\usepackage[a4paper, margin = 3cm, bottom = 3cm]{geometry}
\usepackage{ifpdf}
\usepackage{marginnote}
\usepackage{hyperref}	
\hypersetup{pdfborder=0 0 0, 
	    colorlinks=true,
	    citecolor=black,
	    linkcolor=blue,
	    urlcolor=red,
	    pdfauthor={Guillaume Tahar}
	   }
\newtheorem{thm}{Theorem}[section]
\newtheorem{cor}[thm]{Corollary}
\newtheorem{prop}[thm]{Proposition}
\newtheorem{lem}[thm]{Lemma}
\theoremstyle{definition}
\newtheorem{defn}[thm]{Definition}
\theoremstyle{remark}

\theoremstyle{definition}

\theoremstyle{definition}
\newtheorem{ex}[thm]{Example}
\theoremstyle{definition}

\numberwithin{equation}{section} 
\pagestyle{plain} 
\title{Geometric decompositions of surfaces with spherical metric and conical singularities}
\author{Guillaume Tahar}
\address[Guillaume Tahar]{Faculty of Mathematics and Computer Science, Weizmann Institute of Science,
Rehovot, 7610001, Israel}
\email{tahar.guillaume@weizmann.ac.il}
\date{April 23, 2021}
\keywords{Geodesic arcs, Conical singularities, Constant positive curvature, Triangulations, Core}
\begin{document}
\begin{abstract}
We prove that any compact surface with constant positive curvature and conical singularities can be decomposed into irreducible components of standard shape, glued along geodesic arcs connecting conical singularities. This is a spherical analog of the geometric triangulations for flat surfaces with conical singularities. The irreducible components include not only spherical triangles but also other interesting spherical polygons. In particular, we present the class of \textit{half-spherical concave polygons} that are spherical polygons without diagonals and that can be arbitrarily complicated. Finally, we introduce the notion of core as a geometric invariant in the settings of spherical surfaces. We use it to prove a reducibily result for spherical surfaces with a total conical angle at least $(10g-10+5n)2\pi$.\newline
\end{abstract}
\maketitle
\setcounter{tocdepth}{1}
\tableofcontents

\section{Introduction}

A natural generalization of uniformization theorem concerns surfaces with prescribed singularities. For any compact Riemann surface $S$ of genus $g$ and any real divisor $\sum_{i=1}^{n} \alpha_{i}P_{i}$ formed by positive real numbers $\alpha_{i}>0$ and points $P_{i} \in S$, we may ask if there is a metric with conical singularities of angle $2\pi\alpha_{i}$ at each point $P_{i}$ and constant curvature elsewhere.\newline
If there is a metric with constant curvature $K$ on $S \setminus \{P_{1},\dots,P_{n}\}$, then angle defect at the conical singularities is interpreted as singular curvature in a generalized Gauss-Bonnet formula:\newline
$$\frac{K.Area(S)}{2\pi}=2-2g-n+\sum_{i=1}^{n} \alpha_{i}$$
The sign of this latter quantity, that depends only on the genus of the surface and the prescribed angles, determines if the constant curvature is negative, zero or positive. The metric will, depending on the case, belong to the realms of spherical, flat or hyperbolic geometry.\newline

The case $K \leq 0$ has been completely settled in the work of Picard or Heins, McOwen and Troyanov for modern proofs, see \cite{Pi,He,MO,Tr1,Tr3}. The case of spherical metrics (when $K>0$) is still wide open. Nontrivial obstructions have been raised and partial constructions have been done in various works, see \cite{CW,E,EG,LT,MP,Tr2,UY}.\newline

In this paper, instead of considering globally such metrics, we ask whether any surface with spherical metric and conical singularities can be obtained by gluing several irreducible components.\newline

It is known that in every compact surface with a flat metric and conical singularities, there is a triangulation whose vertices are the conical singularities and edges are geodesic segments, see Lemma 2.2 in \cite{Ta} for a proof of this fact. Triangles of the triangulation are isometric to usual triangles of the plane. The proof of this result generalizes easily for metrics with negative curvature. However, the problem is quite different in spherical geometry. Indeed, there can be continuous of families of geodesic arcs connecting a given pair of conical singularities.\newline

Our main result (Theorem 1.3) is a decomposition theorem. In contrast with triangulation theorem in the flat settings, not only spherical triangles are components of this decomposition. Beforehand, we state some definitions.

\begin{defn}
A \textit{spherical surface} is a compact Riemannian surface with constant positive curvature, conical singularities (at least one) and geodesic boundary (possibly empty) with at least one singularity on each boundary component. Marked points on the boundary are just counted as conical singularities of angle $\pi$.
\end{defn}

We normalize the metrics (by a scaling) in such a way curvature $K=1$. Equivalently, spherical surfaces are (outside conical singularities) locally isometric to the sphere of radius $1$ (whose great circles are of length $2\pi$).\newline

\begin{defn}
For a Riemannian metric $s$ of constant curvature $K$ on a surface $S$, around any interior conical singularity of angle $2\pi\alpha$ at $x \in S$, there is a local coordinate $z$ for which $ds=\frac{2\alpha|z|^{\alpha-1}|dz|}{1+K|z|^{2}}$.\newline
Similarly, around a conical singularity of angle $2\pi\beta$ on the geodesic boundary, the local model is that of an angular sector of angle $2\pi\beta$ bounded by two geodesic arcs.
\end{defn}

There is also a geometric definition of (normalized) spherical surfaces. They are characterized by an atlas of charts on the surface punctured at the singularities with values in the sphere of radius $1$ and such that transition maps belong to $SO(3)$. Particular attention will be given to monodromy of this geometric structure, see Subsection 2.2.\newline
This geometric structure provides also an equivalent definition of geodesic curves, a curve in a spherical surface is geodesic if it is locally the pullback of great circles in spherical charts. We will work only with geodesic arcs without self-intersection.\newline

\begin{thm}
For any spherical surface, there is a system of disjoint geodesic arcs between conical singularities whose connected components of the complement are:\newline
(i) Spherical triangles with angles strictly smaller than $\pi$;\newline
(ii) Foliated digons of arbitrary angle possibly with marked points on one boundary arc (see Subsection 2.5);\newline
(iii) Half-spheres bounded by a great circle with marked points (with no pair of antipodal marked points in the boundary);\newline
(iv) Half-spherical concave polygons (see Definition 3.11).
\end{thm}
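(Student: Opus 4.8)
The plan is to follow the flat-geometry argument (Lemma 2.2 in \cite{Ta}) as far as it will go and then confront the new phenomena of spherical geometry directly. First I would fix a \emph{maximal} system of pairwise disjoint embedded geodesic arcs joining conical singularities, maximality meaning that no further geodesic arc between singularities can be added while keeping the system embedded and disjoint from the existing arcs. Such a system exists because disjoint embedded arcs that cut the surface into complementary regions of positive area are finite in number up to isotopy (standard surface topology, via an Euler-characteristic count after cutting), so the greedy process of adding arcs terminates; note that the continuous families of connecting arcs peculiar to spherical geometry do not obstruct this, since at most finitely many members of such a family can be mutually disjoint. Cutting along this maximal system produces finitely many connected components, each a spherical surface with geodesic boundary whose vertices are conical singularities and which, by maximality, admits no \emph{diagonal}, i.e.\ no embedded geodesic arc through its interior connecting two boundary vertices.

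The core of the theorem is then a purely local classification: which spherical polygons with geodesic boundary admit no diagonal? I would attack this through the developing map of Subsection 2.2, lifting each component to the round sphere and tracking the images of its boundary edges as great-circle arcs. I expect a rough dichotomy governed by the size of the piece and by the presence of antipodal vertices. If the developed image stays inside an open hemisphere, the piece behaves like a planar convex region, and a convex spherical polygon with four or more vertices always has a diagonal (the geodesic between a suitable nonadjacent pair stays inside); hence an irreducible such piece must be a triangle, and the condition that all angles are $<\pi$ is exactly what excludes the degenerate collinear or reflex vertices. The genuinely spherical possibilities arise precisely when this naive diagonal fails: either two vertices become antipodal, so that the connecting geodesic is no longer unique and the family of great-circle arcs sweeps out a region, which is how the foliated digons of Subsection 2.5 and, in the extremal case, the half-spheres bounded by a great circle enter; or a reflex vertex forces every candidate geodesic out of the polygon, which is the mechanism producing the half-spherical concave polygons of Definition 3.11.

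I would organize the case analysis by isolating the antipodal phenomena first. If a boundary vertex has its antipode among the vertices (equivalently some boundary situation produces a length-$\pi$ pairing), I would examine the one-parameter family of connecting great-circle arcs: when every arc of the family remains in the piece it is foliated, yielding a foliated digon, and the maximal such configuration—an entire hemisphere whose bounding great circle carries angle-$\pi$ marked points—gives type (iii), the clause forbidding an antipodal pair being forced because such a pair would itself bound a separating diagonal. Once the antipodal cases are removed, every pair of vertices has a unique connecting geodesic, and the remaining task is to show that the only way all of these can fail to be interior diagonals is that the polygon is concave in the controlled way axiomatized in Definition 3.11. Proving that these non-antipodal, diagonal-free pieces are \emph{exactly} the half-spherical concave polygons, with no further type hiding, is the main obstacle. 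The difficulty is that, unlike in the flat case, ``no diagonal'' is not a combinatorial statement about the number of vertices but a delicate metric condition: one must show that a geodesic leaving a given vertex is expelled through some edge by every other vertex, and rule out a diagonal reappearing after the candidate geodesic wraps part-way around the sphere. I anticipate that this step genuinely requires the structural description of half-spherical concave polygons developed in Section~3—a distinguished half-sphere portion together with a prescribed concave completion—rather than any soft combinatorial argument.
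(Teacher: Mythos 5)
There is a genuine gap, and it sits at the very first step: the claimed existence of a \emph{maximal} system of disjoint geodesic arcs. Your parenthetical justification---that at most finitely many members of a continuous family of connecting arcs can be mutually disjoint---is false. In a foliated digon $D_{\alpha}$ (or in a singular sphere $S_{\alpha}$) the \emph{entire} one-parameter family of geodesic arcs of length $\pi$ joining the two singularities has pairwise disjoint interiors, so after adding any finite number of arcs from such a family one can always add another; the greedy process does not terminate, and the Euler-characteristic count only bounds the number of pairwise \emph{non-isotopic} arcs (in spherical geometry, unlike the flat case, disjoint isotopic geodesic arcs exist because geodesic bigons of positive area exist). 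This is precisely the obstruction the paper flags in Subsection 2.5: foliated digons are ``the main obstruction to get a decomposition theorem as existence of a maximal system of disjoint geodesic arcs.'' The failure is not cosmetic: your framework, if it worked, would output only diagonal-free pieces, hence never a foliated digon, whereas type (ii) is unavoidable---the singular sphere $S_{\alpha}$ admits \emph{no} decomposition into irreducible pieces, since cutting a foliated digon along arcs of its foliation only produces smaller foliated digons. Your later case analysis (``when every arc of the family remains in the piece it is foliated, yielding a foliated digon'') is in fact inconsistent with maximality of the system you started from: such a family would supply arcs that could still be added.

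The paper avoids this by replacing ``take a maximal system'' with an iterative cutting process in which a complexity measure strictly decreases at each step: first cut to make the singular locus connected (Proposition 2.7), then to reduce genus (Proposition 2.8), then cut off monogons bounded by closed geodesic arcs (each such cut removes area $2\pi$, so this terminates), then cut along diagonals between nonconsecutive vertices (reducing the number of sides), and finally, when a geodesic arc between consecutive vertices cuts off a digon of length-$\pi$ arcs, the foliated family is \emph{extended maximally} until its extremal arcs hit singularities or the boundary, and the resulting maximal foliated digons are retained as terminal pieces of type (ii) rather than cut further. Only after foliated digons are excised does ``no further diagonal'' become a meaningful terminal condition, and the remaining pieces are then identified by the classification of irreducible spherical polygons (Theorem 3.13), which is the content your second part correctly anticipates but defers. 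To repair your proof you would need to abandon the maximal-system formulation and adopt some such terminating cutting scheme, treating length-$\pi$ families as objects to be completed and kept, not as arcs to be added one at a time.
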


The proof of Theorem 1.3 is in the end of Section 3. It follows from a more specific theorem about characterization of spherical polygons without diagonals (Theorem 3.13).\newline

In Proposition 4.18 of \cite{MP}, Mondello and Panov prove existence of the so-called Morse-Delaunay decomposition of spherical surfaces (without boundary). However, this decomposition is obtained by optimization instead of an iterative cutting process.\newline

The decomposition theorem is the starting point of a generalization of the notion of core of flat surfaces (see \cite{HKK,Ta1}) in the settings of spherical surfaces. In Section 4, we prove some technical results in order to make the core an efficient geometric invariant of spherical surfaces. In particular, the core is a locus that can be geometrically decomposed into triangles (see Corollary 4.9).\newline
We use the notion of core to prove a reducibility result for spherical surfaces whose total conical angle is large enough (see Theorem 4.12). 

\section{Basic notions}

We state a Gauss-Bonnet formula for spherical surfaces with boundary (normalized with curvature equal to $1$). For a spherical surface $S$ of genus $g$ with $n$ interior conical singularities of angles $2\pi\alpha_{i}$, $p$ boundary components with $m$ conical singularities of angles $2\pi\beta_{j}$ in the boundary, we have:
$$\frac{Area(S)}{2\pi}=2-2g-n-p+\sum_{i=1}^{n} \alpha_{i}+\sum_{j=1}^{m} (\beta_{j}-\frac{1}{2})$$
In particular, both terms of this equation are assumed to be positive.

\subsection{Monodromy}

For any loop $\gamma$ in a spherical surface $S$ punctured at its conical singularities and any base point $x \in \gamma$, a spherical chart around $x$ and the same spherical chart obtained after parallel transport along $\gamma$ are identified up to a rotation $\rho_{\gamma} \in SO(3)$.\newline

Just by looking at singular spheres $S_{\alpha}$ (see Theorem 1.3), it is clear that for a simple direct loop around a conical singularity of angle $2\alpha\pi$, the monodromy along this loop is the rotation of angle $[2\alpha\pi]\in \mathbb{R}/ 2\pi\mathbb{Z}$ whose axis passes through the conical singularity.\newline

For any chart $\phi$ covering a regular point $x$ of $S$, the monodromy group of $S$ at $(x,\phi)$ is subgroup of $SO(3)$ formed by rotations $\rho_{\gamma}$ for any loop $\gamma$ of $S$ punctured at its conical singularities. Spherical surfaces with trivial monodromy (and without boundary) are completely characterized. In particular, in genus zero, trivial monodromy is equivalent to the fact that conical singularities have an angle that is an integer multiple of $2\pi$.

\begin{thm}
A spherical surface $S$ with trivial monodromy and without boundary is a cover of the sphere ramified at the conical singularities. Their angles are of the form $2m\pi$ where $m$ is their (integer) ramification index. 
\end{thm}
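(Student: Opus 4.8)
The plan is to build the ramified covering directly out of the developing map of the geometric structure and then read off the cone angles from its local degrees.

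First I would recall that the $SO(3)$-structure on $S$ punctured at its singularities furnishes a developing map $D\colon \widetilde{S^{*}}\to S^{2}$ on the universal cover of $S^{*}=S\setminus\{P_{1},\dots,P_{n}\}$, equivariant for the monodromy in the sense that $D\circ\gamma=\rho_{\gamma}\circ D$ for every deck transformation $\gamma$. Since the transition maps lie in $SO(3)$, the surface is oriented and $D$ is orientation-preserving. By hypothesis the monodromy representation $\pi_{1}(S^{*})\to SO(3)$ is trivial, so $D\circ\gamma=D$ for all $\gamma$, and $D$ descends to a globally well-defined map $D\colon S^{*}\to S^{2}$. Being assembled from spherical charts, $D$ is a local isometry from the curvature-$1$ metric on $S^{*}$ onto the round sphere; in particular it is conformal, hence, being orientation-preserving, holomorphic once $S^{*}$ and $S^{2}=\mathbb{CP}^{1}$ are viewed as Riemann surfaces.

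Next I would analyze $D$ near each conical singularity $P_{i}$ of angle $2\pi\alpha_{i}$. The small loop around $P_{i}$ lies in $\pi_{1}(S^{*})$ and its monodromy, as recalled above, is the rotation of angle $[2\pi\alpha_{i}]\in\mathbb{R}/2\pi\mathbb{Z}$; triviality of the monodromy forces $2\pi\alpha_{i}\equiv 0\pmod{2\pi}$, so $\alpha_{i}=m_{i}$ is a positive integer. In the coordinate $z$ of Definition 1.2 the metric reads $\tfrac{2m_{i}|z|^{m_{i}-1}|dz|}{1+|z|^{2}}$ on a punctured disk, and $D$ is a single-valued holomorphic map from this punctured disk into the compact surface $\mathbb{CP}^{1}$. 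An isolated singularity of a holomorphic map with values in $\mathbb{CP}^{1}$ is removable, so $D$ extends holomorphically across $z=0$. After post-composing with a rotation carrying $D(0)$ to a finite point, write $D(z)=D(0)+c\,z^{k}+\cdots$ with $c\neq 0$; then $|D'(z)|\sim k|c|\,|z|^{k-1}$ while $1+|D(z)|^{2}$ tends to a finite nonzero limit, so the isometry condition $\tfrac{|D'(z)|}{1+|D(z)|^{2}}=\tfrac{m_{i}|z|^{m_{i}-1}}{1+|z|^{2}}$ forces $k=m_{i}$. Thus $D$ has a critical point of local degree exactly $m_{i}$ at $P_{i}$, and is an unramified local diffeomorphism at every other point.

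Finally I would globalize. Carrying out this extension at each singularity produces a nonconstant holomorphic map $D\colon S\to S^{2}$ on the whole compact surface, which is a local homeomorphism away from the finitely many points $P_{i}$ with $m_{i}\ge 2$ and has the branched model $z\mapsto z^{m_{i}}$ at those points. Since $S$ is compact, $D$ is proper, and a nonconstant proper holomorphic map of Riemann surfaces is a finite branched covering onto its image; as that image is open (by openness away from branch points) and closed (by properness) in the connected sphere, $D$ is surjective. Hence $D\colon S\to S^{2}$ is a covering ramified exactly over the conical singularities, with ramification index $m_{i}$ at $P_{i}$ and corresponding cone angle $2\pi m_{i}$, as claimed. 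I expect the only delicate point to be the local analysis at the cone points: one must justify that the single-valued developing map extends holomorphically across each puncture rather than acquiring an essential singularity, and that its local degree is pinned to $m_{i}$. The removable-singularity step relies crucially on the compactness of $S^{2}$ (so $D$ cannot omit an open set), and the degree computation rests on the precise exponent $m_{i}-1$ in the conformal factor of Definition 1.2; once $\alpha_{i}$ has been shown to be an integer, the rest is the standard theory of proper holomorphic maps between Riemann surfaces.
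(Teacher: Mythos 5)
The paper states Theorem 2.1 without proof (it is invoked as a known classification result and used afterwards only through Riemann--Hurwitz), so there is no in-paper argument to compare against. Your route is the natural one, and its architecture is sound: triviality of the monodromy lets the developing map descend to a single-valued local isometry $D\colon S^{*}\to S^{2}$; the rotational monodromy around each $P_{i}$ forces $\alpha_{i}=m_{i}\in\mathbb{Z}$; the local expansion pins the degree of $D$ at $P_{i}$ to $m_{i}$; and properness plus open-and-closedness makes the extended map a surjective branched cover.

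There is, however, one genuinely false step: the claim that ``an isolated singularity of a holomorphic map with values in $\mathbb{CP}^{1}$ is removable.'' This is not true, and compactness of the target does not rescue it: $z\mapsto e^{1/z}$ is a holomorphic map of the punctured disk into $\mathbb{CP}^{1}$ with an essential singularity at $0$, and by the big Picard theorem a map with an essential singularity omits at most two points of the sphere, so it does not ``omit an open set'' either --- no contradiction can be extracted from compactness alone. The removability must come from the geometry, and you already have the needed identity on the page: the local isometry condition gives $\frac{|D'(z)|}{1+|D(z)|^{2}}=\frac{m_{i}|z|^{m_{i}-1}}{1+|z|^{2}}$, so the spherical derivative of $D$ is bounded near $z=0$. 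A bounded spherical derivative makes $D$ uniformly continuous from the punctured disk (Euclidean metric) to $S^{2}$ (chordal metric) --- integrate the bound along paths avoiding the puncture, whose length can be taken comparable to $|z_{1}-z_{2}|$ --- hence $D$ extends continuously to $z=0$, and a continuous extension of a holomorphic map is holomorphic by Riemann removability applied in a chart around $D(0)$. Equivalently: the cone point lies at finite distance in the cone metric, and a local isometry sends Cauchy sequences to Cauchy sequences, so $D(z)$ converges as $z\to 0$. With that repair, the degree computation and the proper-map globalization go through exactly as you wrote them.
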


Theorem 2.1 and Riemann-Hurwitz formula implies the following impossibility result.

\begin{cor}
The only spherical surface of genus zero without boundary and with a unique conical singularity is the standard sphere with a marked point.
\end{cor}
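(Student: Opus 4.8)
The plan is to reduce to the trivial-monodromy situation already understood, invoke Theorem 2.1, and then close the argument with a Riemann-Hurwitz count.

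First I would observe that a genus zero surface with a single puncture is simply connected: deleting the unique conical singularity from a topological sphere yields an open disk, whose fundamental group is trivial. Consequently every loop $\gamma$ in $S$ punctured at its conical singularity is contractible, so the associated rotation $\rho_{\gamma} \in SO(3)$ is the identity. Hence the monodromy group of $S$ is trivial, with no spherical-geometric input needed beyond the topology of the once-punctured sphere.

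Since $S$ now has trivial monodromy, no boundary, and genus zero, Theorem 2.1 applies: $S$ is a branched cover $f\colon S \to \mathbb{S}^2$ of the standard sphere, ramified only at the conical singularities. Because $S$ carries a single conical singularity, the cover is ramified at exactly one point, and its ramification index $m$ determines the conical angle $2m\pi$. Write $d$ for the degree of $f$. Applying Riemann-Hurwitz with $g(S)=0$ and $g(\mathbb{S}^2)=0$, and noting that the only ramification contribution comes from the single singularity, we obtain $2 = 2d - (m-1)$, that is $m = 2d-1$. On the other hand a ramification index cannot exceed the degree, so $m \le d$; combining this with $2d-1 \le d$ forces $d \le 1$, hence $d=1$ and $m=1$. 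Therefore $f$ is an isometry onto the sphere, and the single conical singularity has angle $2\pi$, i.e. it is merely a marked point, which identifies $S$ with the standard sphere carrying a marked point.

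I expect no serious obstacle here: the whole argument hinges on the elementary observation that a single puncture of the sphere kills the monodromy, after which Theorem 2.1 and a one-line Riemann-Hurwitz computation do the rest. If one prefers to avoid Riemann-Hurwitz, the same inequality $m = 2d-1$ drops out of Gauss-Bonnet by comparing areas, since the area $2\pi(1+m)$ of $S$ must equal $4\pi d$ for a degree $d$ cover of the unit sphere.
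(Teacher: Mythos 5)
Your proof is correct and follows exactly the route the paper indicates: establish trivial monodromy (automatic since the once-punctured sphere is simply connected), apply Theorem 2.1 to realize $S$ as a branched cover of the standard sphere, and conclude with Riemann--Hurwitz that the degree is $1$ and the angle is $2\pi$. The paper leaves these details unwritten, merely asserting that Theorem 2.1 and Riemann--Hurwitz imply the corollary, so your write-up is a faithful (and more explicit) version of the same argument.
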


It also proves that there is only one monogon in spherical geometry.

\begin{cor}
The only spherical surface of genus zero with one boundary component formed by a unique geodesic arc and without any interior conical singularity is the half-sphere bounded by a great circle with a marked point on it.
\end{cor}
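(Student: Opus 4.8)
The plan is to reduce Corollary 2.3 to Corollary 2.2 by a doubling construction. Let $S$ denote such a monogon: genus zero, one boundary component consisting of a single geodesic arc, and no interior conical singularities. Since one geodesic arc forms an entire boundary component, that arc must be a geodesic loop based at a single boundary singularity $v$, meeting itself at $v$ with interior angle $\theta=2\pi\beta$, where $2\pi\beta$ is the cone angle of the boundary singularity. As a preliminary sanity check I would record the Gauss-Bonnet computation of Section 2: with $g=0$, $n=0$, $p=1$, $m=1$ it gives $\mathrm{Area}(S)/2\pi=\beta+\tfrac12$, so $\mathrm{Area}(S)=2\pi\beta+\pi$.

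Next I would form the metric double $DS$ of $S$ across its geodesic boundary, gluing a mirror copy $\overline{S}$ to $S$ along the boundary by the identity. Because the boundary arc is geodesic (hence totally geodesic) and $S$ has no interior singularities, the doubled metric is a smooth spherical metric of curvature $1$ away from the image of $v$. Topologically $DS$ is the double of a disk, hence a sphere, so $DS$ is a closed spherical surface of genus zero. The only point where regularity can fail is the image of $v$: a small loop around it traverses the interior angle $\theta$ in each of the two copies, so the total cone angle there is $2\theta=4\pi\beta$. Thus $DS$ has at most one conical singularity.

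Now I would invoke Corollary 2.2. If $v$ were a genuine conical singularity of $DS$, i.e.\ $4\pi\beta\neq 2\pi$, then $DS$ would be a genus-zero closed spherical surface with exactly one conical singularity, which Corollary 2.2 forbids: the unique such surface is the standard sphere with a marked point, whose marked point has angle $2\pi$ and is therefore regular. Hence the doubled cone angle must equal $2\pi$, forcing $\beta=\tfrac12$. In particular the boundary singularity $v$ has angle $\pi$, so it is a marked point, the boundary loop is a smooth simple closed geodesic, and $DS$ is the standard round sphere.

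Finally I would read off the shape of $S$. On the round sphere $DS$ the only simple closed geodesics are great circles, so the boundary of $S$ is a great circle, and $S$ is one of the two hemispheres it bounds, with the marked point $v$ lying on that great circle — exactly the half-sphere in the statement. The main obstacle is the middle step: one must verify that the double is a bona fide spherical surface, with curvature $1$ across the glued geodesic and a single possible cone point of angle precisely $2\theta$ at $v$, since it is this angle bookkeeping that lets Corollary 2.2 pin down $\beta=\tfrac12$. The subsequent identification of $S$ as a hemisphere is then routine.
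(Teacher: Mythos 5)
Your proof is correct and follows essentially the same route as the paper: duplicate the monogon, glue the two copies along the geodesic boundary to obtain a closed genus-zero spherical surface with a single possible cone point, and apply Corollary 2.2 to force that cone angle to equal $2\pi$, identifying the double as the standard sphere and the monogon as a hemisphere. Your write-up merely makes explicit the angle bookkeeping (total angle $2\theta$ at the doubled vertex) and the final identification of the boundary as a great circle, details the paper leaves implicit.
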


\begin{proof}
For any such monogon, we duplicate it and glue the two surfaces obtained on their boundary. We get a sphere without boundary with only one conical singularity. Corollary 2.2 implies that this conical singularity has angle equal to $2\pi$. We get in fact the standard sphere and the two monogons are just two half-spheres separated by a great circle with a marked point on it.
\end{proof}

The latter corollary then implies a technical result on geodesic trajectories in spherical polygons.

\begin{cor}
In a spherical surface of genus zero, without interior conical singularities and with a unique boundary component, there is no self-intersecting geodesic curve.
\end{cor}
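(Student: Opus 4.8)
The plan is to derive a contradiction from the classification of monogons in Corollary 2.3. Assume $S$ has genus zero, a single boundary component $\partial S$, and all of its conical singularities on $\partial S$, and suppose for contradiction that some geodesic curve $\gamma$ in $S$ has a self-intersection. Since two geodesics through a common point with a common tangent direction coincide (uniqueness for the geodesic equation), $\gamma$ cannot overlap itself along an arc, so every genuine self-crossing of $\gamma$ is transversal; moreover it occurs at an interior point of $S$, as a geodesic meeting $\partial S$ tangentially would coincide with a boundary arc and a transversal meeting would leave $S$.

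First I would extract an embedded geodesic loop. Parametrize $\gamma$ and let $t_2$ be the smallest parameter at which $\gamma$ revisits a point already visited, say at the earlier time $t_1<t_2$; by minimality $\gamma|_{[t_1,t_2)}$ is injective, so $\ell:=\gamma|_{[t_1,t_2]}$ is an embedded loop based at the interior crossing point $q=\gamma(t_1)=\gamma(t_2)$, lying in the interior of $S$. Since $S$ is a compact connected genus-zero surface with one boundary circle, it is a topological disk, and the simple closed curve $\ell$ separates it into an annulus containing $\partial S$ and a disk $D$ with $\partial D=\ell$ and $D\cap\partial S=\emptyset$.

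Now $D$ is a spherical surface of genus zero whose boundary is the single geodesic arc $\ell$ (from $q$ to $q$), whose only vertex is the corner at $q$, and which inherits no interior conical singularity from $S$. Hence $D$ is a monogon, and Corollary 2.3 forces it to be the half-sphere bounded by a great circle with a marked point, whose vertex has interior angle $\pi$. Thus the two boundary rays of $\ell$ at $q$, namely $\gamma'(t_1)$ and $-\gamma'(t_2)$, are collinear, which means $\gamma'(t_1)$ and $\gamma'(t_2)$ span the same tangent line at $q$. This contradicts the transversality of the self-crossing, and the contradiction proves the corollary.

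The step I expect to require the most care is the reduction to a clean monogon: one must check that the innermost loop $\ell$ is genuinely embedded and interior, and that the disk it bounds contains no arc of $\partial S$ and no singularity, so that Corollary 2.3 really applies. These facts rest on $S$ being a topological disk with every singularity confined to $\partial S$ and on the use of geodesic uniqueness to rule out tangential self-contact and tangency to the boundary; once they are in place, the angle comparison at $q$ is immediate.
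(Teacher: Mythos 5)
Your proof is correct and follows essentially the same route as the paper: extract an embedded loop from the self-intersection (the paper's ``singular loop with a turning point''), observe that it bounds a contractible component which is a monogon, invoke Corollary 2.3 to identify it as a half-sphere whose marked point has angle $\pi$, and contradict the transversality of the crossing. Your version merely spells out the details (innermost-loop extraction, interiority of the loop, transversality) that the paper leaves implicit.
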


\begin{proof}
Such a self-intersecting geodesic curve would contain a singular loop formed by a geodesic arc and a turning point. One connected complement of this singular loop would be contractible. Thus, it would be a monogon. Corollary 2.3 implies that such a monogon is just a half-sphere bounded by a great circle. Consequently, the singular loop is a closed geodesic and is thus not self-intersecting.
\end{proof}

\subsection{Geodesic arcs}

We prove that length minimizer of nontrivial homotopy class of topological arcs are broken geodesic arcs.

\begin{lem}
In any nontrivial homotopy class of topological arcs without self-intersection joining conical singularities (possibly the same), any arc that minimizes length functional is a broken geodesic arc: it is formed by several geodesic arcs joining conical singularities.
\end{lem}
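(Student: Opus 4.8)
The plan is to split the argument into two parts: first a compactness step producing a length minimizer in the given class, and then a purely local variational analysis showing that such a minimizer is geodesic except possibly at conical singularities.

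First I would fix the nontrivial homotopy class of simple arcs with the prescribed endpoints and take a minimizing sequence $(\gamma_k)$, each reparametrized proportionally to arc length on $[0,1]$. Since the spherical surface $S$ is compact and is a geodesic metric space (the metric of constant curvature with conical singularities induces a genuine length metric, the cone points included), the lengths of the $\gamma_k$ are uniformly bounded, hence after reparametrization the curves are uniformly Lipschitz. By Arzelà--Ascoli a subsequence converges uniformly to a limit arc $\gamma$, and since the length functional is lower semicontinuous under uniform convergence, $\gamma$ realizes the infimum. Two points need checking here: that $\gamma$ lies in the prescribed homotopy class, which follows because a compact cone surface is locally contractible (an ANR), so uniformly close arcs sharing the same endpoints are homotopic rel endpoints; and that $\gamma$ is nonconstant, which is guaranteed by nontriviality of the class. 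I expect this existence step, and in particular the control of the minimizing sequence near the singular endpoints, to be the main technical obstacle.

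Second, I would prove that $\gamma$ is geodesic away from the conical singularities. Let $x$ be a point of $\gamma$ that is not a conical singularity. Then $x$ has a neighborhood isometric to a spherical disk, inside which the shortest path between any two sufficiently close points is the unique minimizing arc of a great circle. If $\gamma$ did not coincide with this arc between its entry and exit points of the neighborhood, replacing that portion by the great-circle arc would strictly shorten $\gamma$ while keeping it in the same homotopy class, contradicting minimality. Hence $\gamma$ is locally geodesic at every regular point, so it is a smooth geodesic on the complement of the conical singularities it meets.

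Finally I would control the breakpoints. Since $\gamma$ is an arc and $S$ has only finitely many conical singularities, $\gamma$ meets the set of conical singularities in finitely many points, and by the previous step these are the only places where consecutive geodesic pieces can fail to join smoothly. Therefore $\gamma$ is a concatenation of finitely many geodesic arcs whose interiors avoid the conical singularities and whose endpoints are conical singularities, which is exactly the assertion. As a complement (not needed for the statement) one can note that at an interior singularity minimality forces each of the two angles made by the incoming and outgoing arcs to be at least $\pi$, since otherwise a shortcut across the corner, homotopic to the original because a small disk neighborhood of the cone point is simply connected, would shorten $\gamma$. Once existence is secured, the regularity and breakpoint analysis reduce to these routine local variations.
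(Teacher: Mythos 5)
Your proposal is correct and takes essentially the same route as the paper's proof: a compactness argument producing a length minimizer (the paper bounds lengths from below by injectivity radii and invokes compactness, you use arc-length reparametrization, Arzel\`a--Ascoli and lower semicontinuity), then a local shortcut argument in spherical charts showing the minimizer is geodesic at every regular point, and finally a finiteness argument for the breakpoints. The only minor difference is that you deduce finiteness of the geodesic pieces from injectivity of the arc together with the finiteness of the set of conical singularities, whereas the paper continues each geodesic piece in both directions until it hits a singularity and then appeals to compactness; both are equally valid.
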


\begin{proof}
Since the homotopy class is assumed to be nontrivial, the length of any arc of this class is bounded by below by injectivity radii of the conical singularities of the surface. Using a compacity argument, there is an arc $\alpha$ that minimizes length in this class. This implies that the arc is conjugated to great circles in the local spherical charts. For any regular point of the arc, the geodesic can be continued in both directions. Since it has finite length, in each direction the geodesics hits a conical singularity. Any regular point of the arc belongs to a geodesic arc. A compactness argument then shows that $\alpha$ is a broken geodesic arc formed by finitely many geodesic arcs joining conical singularities.
\end{proof}

\subsection{Singular locus}

\begin{defn} In a spherical surface $S$, we define the \textit{singular locus} $Sing(S)$ of $S$ as the union of the conical singularities and the boundary.
\end{defn}

The following two propositions show that in any spherical surface $S$, there is always enough disjoint geodesic arcs to decompose into the gluing of finitely many spherical polygons (i.e. spherical surfaces of genus zero with a connected singular locus).

\begin{prop} In any spherical surface $S$ whose singular locus $Sing(S)$ is not connected, there is a geodesic arc connecting two conical singularities belonging to distinct components of the singular locus.
\end{prop}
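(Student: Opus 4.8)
The plan is to connect two components by minimizing length and then to read off the desired arc from the resulting broken geodesic. First I would fix two distinct components $C_{1}, C_{2}$ of $Sing(S)$. Each of them contains at least one conical singularity: an interior conical singularity forms a component by itself, while every boundary component carries a singularity by the very definition of a spherical surface. Choose conical singularities $A \in C_{1}$ and $B \in C_{2}$. Since $S$ is connected there is a topological arc without self-intersection joining $A$ to $B$. Observe also that distinct components of the closed set $Sing(S)$ are disjoint compact subsets of the compact surface $S$, hence lie at positive distance; in particular any arc joining them has length bounded below by $\dist(C_{1}, C_{2}) > 0$, so the relevant homotopy class is nontrivial.

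Next I would minimize length in the homotopy class (relative to the endpoints $A$ and $B$) of this arc. By the preceding lemma, a length minimizer $\alpha$ is a broken geodesic arc: a concatenation of finitely many geodesic arcs joining conical singularities. Write its successive vertices as $A = Q_{0}, Q_{1}, \dots, Q_{k} = B$, all of which are conical singularities, the arc between two consecutive vertices being a simple geodesic arc.

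The final step is a first-exit argument along $\alpha$. Since $Q_{0} = A \in C_{1}$ while $Q_{k} = B \notin C_{1}$, there is a smallest index $j$ with $Q_{j} \notin C_{1}$. Then $Q_{j-1}$ lies in $C_{1}$ and $Q_{j}$ lies in some component of $Sing(S)$ distinct from $C_{1}$, so the geodesic arc of $\alpha$ joining $Q_{j-1}$ to $Q_{j}$ connects two conical singularities belonging to distinct components, as required. I expect the only delicate point to be the insistence that the endpoints be genuine conical singularities rather than interior points of boundary arcs: this is precisely why one routes through the broken-geodesic lemma, whose vertices are automatically conical singularities, instead of naively taking the globally shortest path between $C_{1}$ and $C_{2}$, which could meet the relative interior of a boundary edge perpendicularly. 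Note finally that the arc joining $Q_{j-1}$ to $Q_{j}$ cannot be contained in a single boundary component, since its endpoints lie in different components of $Sing(S)$, so it is a genuine connecting arc.
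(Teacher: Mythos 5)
Your proposal is correct and follows essentially the same route as the paper: choose conical singularities in two distinct components of $Sing(S)$, minimize length in the homotopy class of arcs joining them, invoke the broken-geodesic lemma (Lemma 2.5), and extract a single geodesic arc whose endpoints lie in distinct components. Your first-exit argument and the remarks on nontriviality of the homotopy class and on arcs not lying inside one boundary component merely make explicit what the paper leaves implicit in its final sentence.
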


\begin{proof}
For any pair of conical singularities that do not belong to the same component of $Sing(S)$ there is an homotopy class of topological arcs without self-intersection that join them. We take the shortest topological arc $\alpha$ in this homotopy class. Following Lemma 2.5, $\alpha$ is a broken geodesic arc formed by several geodesic arcs joining conical singularities. At least one of these simple geodesic arcs joins two distinct components of $Sing(S)$.
\end{proof}

\begin{prop} In any spherical surface $S$ of genus $g>0$ and whose singular locus $Sing(S)$ is connected (either $S$ is closed with a unique conical singularity or it has a unique boundary component and no interior singularity), there is a geodesic arc distinct from the boundary and whose complement is connected.
\end{prop}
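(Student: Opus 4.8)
The plan is to exhibit the required arc as a single geodesic piece of a length-minimizing, non-separating, broken geodesic. Call a geodesic arc $\gamma$ with endpoints in $Sing(S)$ \emph{non-separating} when $S\setminus\gamma$ is connected; this is a purely topological, in fact homological, condition. The hypothesis $g>0$ enters exactly here: there is a non-separating simple closed curve in $S$, and pushing it through a point of $Sing(S)$ yields a non-separating topological arc with endpoints in $Sing(S)$. In the first case ($S$ closed with a single interior conical singularity $P$) this is a non-contractible loop based at $P$; in the second case (one boundary component carrying all singularities, no interior singularity) it is a properly embedded arc joining boundary singularities and running around a handle.

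First I would feed this homotopy class into Lemma 2.5: a minimizer $\alpha$ exists and is a simple broken geodesic arc $\alpha=\alpha_1\cup\cdots\cup\alpha_k$ whose interior breakpoints lie in $Sing(S)$. Being homotopic to an embedded non-separating arc, $\alpha$ represents the same $\mathbb{Z}/2\mathbb{Z}$-homology class and is therefore itself non-separating. In the first case there is only one conical singularity, so $\alpha$ cannot acquire an interior breakpoint without self-intersecting; hence $k=1$ and $\alpha$ is already a single non-separating geodesic loop, which is the arc we want. All the genuine work is in the second case, where $\alpha$ may really be broken.

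For the second case I would argue with $H_1(S,\partial S;\mathbb{Z}/2\mathbb{Z})$. Each $\alpha_i$ is a properly embedded simple arc with endpoints on $\partial S$, hence determines a class $[\alpha_i]$, and the standard criterion says a properly embedded arc is separating if and only if its relative class vanishes. Viewing $\alpha$ as the concatenation of the $\alpha_i$ gives, at the level of $\mathbb{Z}/2\mathbb{Z}$-chains, the additivity $[\alpha]=\sum_{i=1}^{k}[\alpha_i]$. Since $\alpha$ is non-separating, $[\alpha]\neq 0$, so some summand $[\alpha_{i_0}]\neq 0$; the corresponding geodesic arc $\alpha_{i_0}$ is non-separating, i.e. $S\setminus\alpha_{i_0}$ is connected. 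Moreover $\alpha_{i_0}$ is necessarily distinct from (and not contained in) the boundary: any arc lying in $\partial S$ is null in $H_1(S,\partial S;\mathbb{Z}/2\mathbb{Z})$, and $\partial S$ itself bounds $S$ and is null-homologous, whereas $[\alpha_{i_0}]\neq 0$.

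The main obstacle is this homological extraction together with the characterization ``separating $\iff$ null-homologous rel boundary'' applied to arcs whose endpoints are conical singularities (i.e. marked points, possibly at the corners of $\partial S$) rather than generic boundary points; I would need to confirm that both this criterion and the additivity $[\alpha]=\sum[\alpha_i]$ survive in the marked setting, including the degenerate subcase in which every breakpoint is one and the same boundary singularity. I would also check that the minimizer supplied by Lemma 2.5 is genuinely embedded, so that the separating/non-separating dichotomy applies to each piece $\alpha_i$. Everything else—producing the non-separating class from $g>0$, and collapsing the first case to a single loop—is routine.
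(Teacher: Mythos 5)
Your proposal follows essentially the same route as the paper: use $g>0$ to produce a nontrivial homotopy class of embedded non-separating arcs between singularities, take the length minimizer, invoke Lemma 2.5 to see it is a broken geodesic, and extract a single non-separating geodesic piece. Your $\mathbb{Z}/2\mathbb{Z}$-homology additivity is exactly a rigorous formulation of the paper's extraction step (``if the complement of every geodesic piece of $\alpha$ has two components, then so does the complement of $\alpha$''); the only slight divergence is that in the closed one-singularity case the paper does not claim $k=1$ (the minimizer, being a limit of embedded arcs, may touch the singularity several times) but instead observes that any piece then works --- a situation your homological argument covers anyway, so nothing essential is lost.
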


\begin{proof}
We proceed quite similarly as in the proof of Proposition 2.7. Since $g>0$, there is an homotopy class of topological arcs without self-intersection that join conical singularities and whose complement is connected. We take the shortest topological arc $\alpha$ in this homotopy class. It is a broken geodesic arc formed by several geodesic arcs joining conical singularities. The original topological arc is not zero homologous in $H_{1}(S; Sing(S))$. Thus, in the minimizing broken geodesic at least one of geodesic arcs is not zero homologous and hence does not cut the surfaces into two.
\end{proof}

\subsection{Spherical polygons and spherical projection}

A \textit{spherical polygon} is a spherical surface of genus zero with one boundary component formed by geodesic arcs and without interior conical singularities. In particular, they are not always subsets of the standard sphere. Spherical polygons have trivial monodromy. Therefore, their developing map induces a projection to the standard sphere (well-defined up to a rotation of $SO(3)$).

\begin{lem}
If $S$ is a spherical polygon whose spherical projection $Y$ is contained in an open half-sphere $H$, then the convex hull $Y'$ of $Y$ in $H$ is also a polygon contained in $H$ with at least three vertices. Their angles are strictly smaller than $\pi$. Besides, these vertices are projections of vertices of $S$ with the same angle.
\end{lem}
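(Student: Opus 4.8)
The plan is to identify $Y'$ explicitly as the convex hull of a finite point set and then read off its combinatorics from convexity. Since $S$ is a spherical polygon it has trivial monodromy, so the developing map gives a well-defined spherical projection and $Y$, being the image of the compact surface $S$, is compact. The open half-sphere $H$ is geodesically convex: any two of its points are joined by a unique minimizing geodesic arc, which again lies in $H$. As $\partial S$ is a finite union of geodesic arcs, its projection is a finite union of geodesic arcs, and each such arc is contained in the convex hull of its two endpoints. Consequently
\[
Y' = \mathrm{conv}_H(Y) = \mathrm{conv}_H\big(\{\text{projections of the vertices of } S\}\big),
\]
the convex hull in $H$ of finitely many points. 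First I would record that such a hull is a convex spherical polygon contained in $H$ whose boundary is a finite cyclic union of geodesic arcs and whose vertices form a subset of the projected vertices of $S$.

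Next I would dispatch the two easy assertions. Convexity of $Y'$ forces every interior angle to be at most $\pi$, and an angle equal to $\pi$ would mean the corresponding point is not a genuine vertex; hence all angles are strictly smaller than $\pi$. For the count of vertices, note that $Y$ has nonempty interior because the developing map is a local isometry at the regular interior points of $S$, so $Y'$ is genuinely two-dimensional. A two-dimensional compact convex subset of $H$ bounded by geodesic arcs cannot be a point or a single arc, and it cannot be a spherical bigon either, since the two vertices of a bigon are antipodal whereas $H$, being an open half-sphere, contains no pair of antipodal points. Therefore $Y'$ has at least three vertices.

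It remains to analyse a vertex $v$ of $Y'$. Being an extreme point of the hull it lies in $Y$ and equals the projection of some $x\in S$; choose a supporting half-sphere $H^{+}\supseteq Y'$ with $Y'\cap\partial H^{+}=\{v\}$. A neighbourhood of $x$ in $S$ is a circular sector of angle $\beta$ (the angle of $S$ at $x$), which projects to a sector $C$ of angle $\beta$ with apex $v$ contained in $H^{+}$. Since $v\in\partial H^{+}$, the sector $C$ must lie in the tangent half-plane to $\partial H^{+}$ at $v$, of angular width $\pi$; this already forces $\beta<\pi$, and it rules out $x$ being a regular interior point or an interior point of a boundary arc, as these would contribute a full disc, respectively a half-disc, of directions at $v$, which cannot fit inside a corner of angle $<\pi$. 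Hence $x$ is a true vertex of $S$ with angle $\beta$. To obtain the equality of angles I would show that the two edges of $Y'$ issuing from $v$ are precisely the projections of the two geodesic edges of $S$ meeting at $x$, so that the tangent cone of $Y'$ at $v$ is exactly $C$ and the angle there is $\beta$.

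The main obstacle is exactly this last equality. The tangent cone of the hull at $v$ is a priori the convex hull of the directions from $v$ to all the other projected vertices, and a far-away sheet of the developing image could in principle point outside the sector $C$ and thereby open the corner beyond $\beta$. The crux is therefore to exclude such contributions; I expect to do this by exploiting that $S$ has no interior conical singularities and is topologically a disc with a single boundary component, so that near the exposed supporting point $v$ the only part of $Y$ accumulating at $v$ is the local sector $C$, together with the absence of self-intersecting geodesics from Corollary 2.4, which constrains how the projected boundary can fold back toward $v$.
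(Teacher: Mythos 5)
Everything you actually prove coincides with the paper's own argument: you identify $Y'$ with the convex hull of the finitely many projected vertices, deduce the strict angle bound from convexity, and rule out degenerate hulls by the absence of antipodal pairs in $H$ (the paper phrases this as: monogons are half-spheres and digons have sides of length $\pi$; note your case list should also exclude a hull bounded by a single closed geodesic, i.e.\ a half-sphere, which the same antipodal-pair remark disposes of). Your supporting-half-sphere analysis of a hull vertex -- full disc, half-disc, or sector of angle $\beta$ forced into an open half-plane of directions, whence $\beta<\pi$ -- is a more detailed version of the paper's one-line observation that such a vertex cannot be the projection of a regular interior point or a regular boundary point. At this stage you have shown: $Y'$ is a polygon in $H$ with at least three vertices, all of angle $<\pi$, and each of them is the projection of a vertex of $S$ whose cone angle is at most the hull angle there, in particular $<\pi$.

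The ``main obstacle'' you flag at the end is not an obstacle to be overcome: the literal equality of angles is false, and the paper's own proof neither establishes it nor ever uses it (its final sentence claims only that the hull vertices are projections of vertices of $S$ with angle strictly smaller than $\pi$, and that weaker form is all that is invoked later, e.g.\ in Proposition 3.12, where the lemma serves only to produce three vertices of $S$ with angle $<\pi$). For a counterexample to the equality, take a small spherical copy (via inverse gnomonic projection) of the planar L-shaped hexagon with vertices $(0,0),(2,0),(2,1),(1,1),(1,2),(0,2)$: its convex hull is the pentagon with vertices $(0,0),(2,0),(2,1),(1,2),(0,2)$, and at $(2,1)$ the hull has angle approximately $3\pi/4$ while the polygon has angle approximately $\pi/2$. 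The hull edge from $(2,1)$ to $(1,2)$ is not the projection of any edge of $S$, so your plan of identifying the two hull edges at $v$ with the two projected edges of $S$ at $x$ cannot succeed -- exactly because, as you suspected, distant parts of $Y$ can open the corner, and no hypothesis of the lemma prevents this. So your write-up is a complete and correct proof of the lemma as it is actually used, once ``with the same angle'' is weakened to ``with angle at most that of $Y'$ at the corresponding vertex, hence strictly smaller than $\pi$''; the defect lies in the statement, not in your argument.
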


\begin{proof}
A subset $E$ of $H$ is convex if for every $x,y \in E$, the portion of great circle passing through $x$ and $y$ which is contained in $H$ is also contained in $E$. The convex hull $Y'$ of $Y$ is the union of any such portions of great circles in $H$ between points of $Y$. As such, it is also a subset of $H$. The spherical projection $Y$ of polygon $S$ is also a polygon. Its convex hull $Y'$ is also the convex hull of the vertices of $Y$ in the sphere. Therefore, $Y'$ is also a spherical polygon of $H$. Since it is convex, the angles of its vertices are strictly smaller than $\pi$. Monogons are half-spheres (see Corollary 2.3) and Digons have boundary arcs of length $\pi$ so $Y'$ has at least three vertices. Therefore, $Y$ has at least three vertices of angle smaller than $\pi$. These vertices cannot be the projection of regular point of $S$ nor regular points of its boundary. These vertices are the projection of vertices of $S$ with an angle strictly smaller than $\pi$.
\end{proof}

\subsection{Digons}

The following theorem is equivalent to the main result of \cite{Tr2}. It is the complete classification of spherical surfaces of genus zero with two conical singularities. They will be building blocks in our further constructions.

\begin{thm}
Any spherical surface of genus zero with two conical singularities and without boundary is an element of one of the two following families:\newline
(i) \textit{Foliated singular sphere} $S_{\alpha}$ : a topological sphere with two conical singularities with the same angle $\alpha>0$ foliated by a continuous family of disjoint geodesic arcs of length $\pi$;\newline
(ii) \textit{Pearl row} $P_{m,L}$ : $m \geq 1$ spheres glued cyclically along a slit of length $L$ with $0<L<\pi$. It is a cover of the standard sphere with two ramification points of index $m$ (angles $2m\pi$ and trivial monodromy).
\end{thm}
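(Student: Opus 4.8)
The plan is to read off the structure from the developing map and holonomy, splitting the argument according to whether the monodromy is trivial. Write $\theta_1,\theta_2$ for the cone angles at $P_1,P_2$. Since $S\setminus\{P_1,P_2\}$ is a twice–punctured sphere, its fundamental group is $\mathbb{Z}$, generated by a small loop around $P_1$ (homotopic to the inverse of a loop around $P_2$). Hence the monodromy group is cyclic, generated by a single rotation $R\in SO(3)$: by the computation recalled in Subsection 2.2, the loop around $P_i$ has holonomy a rotation of angle $\theta_i\bmod 2\pi$ about the axis through the developed image $p_i$ of $P_i$, and the two holonomies are mutually inverse.

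If $R=\mathrm{id}$ the monodromy is trivial, so Theorem 2.1 applies and $S$ is a branched cover of the standard sphere, ramified only over $p_1,p_2$ with angles $2\pi m_i$, $m_i\in\mathbb{Z}$. Connected covers of $S^2\setminus\{p_1,p_2\}$ (homotopy equivalent to a circle) are cyclic, so the cover is the $m$–fold cyclic cover totally ramified over both points; Riemann–Hurwitz forces $m_1=m_2=m$ and degree $m$. Cutting the base sphere along the geodesic arc realizing $L:=d(p_1,p_2)\in(0,\pi]$ and gluing $m$ copies cyclically reconstructs exactly the pearl row $P_{m,L}$ for $L<\pi$, and the symmetric foliated sphere $S_{2\pi m}$ for $L=\pi$. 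This yields family (ii) together with the integer members of family (i).

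The heart of the matter is the case $R\neq\mathrm{id}$. Such a rotation has a unique axis; being simultaneously a rotation about the axis through $p_1$ and, as the inverse of the $P_2$–holonomy, about the axis through $p_2$, the two axes coincide, so $p_1,p_2$ are antipodal and we may take them to be the poles $N,S$. Choose a length–minimizing geodesic $\delta$ from $P_1$ to $P_2$ (existence by compactness, as in Lemma 2.5); its development is a meridian from $N$ to $S$, of length $\pi$. Cutting along $\delta$ produces a simply connected spherical digon $\Sigma$ whose geodesic sides $\delta^{\pm}$ develop to meridians differing by $R$, with vertex angles $\theta_1$ at $P_1$ and $\theta_2$ at $P_2$. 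Now every geodesic issuing from $P_1$ develops to a great circle through $N$, i.e. a meridian, which reaches the antipode after length exactly $\pi$; hence $\Sigma$ is foliated by meridian arcs of length $\pi$ from $P_1$ to $P_2$. Parametrizing this foliation by swept longitude, the total angle at $N$ equals the longitude increment between $\delta^{+}$ and $\delta^{-}$, and the same increment is read off at $S$, so $\theta_1=\theta_2=:\Theta$. Thus $\Sigma$ is the generalized lune of angle $\Theta$, and re-gluing $\delta^{\pm}$ recovers the foliated singular sphere $S_\Theta$ of family (i); note $\Theta\notin 2\pi\mathbb{Z}$ here, since otherwise $R$ would be trivial.

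The delicate step is the developing–map control underlying the third paragraph: I must show that the geodesics emanating from $P_1$ genuinely reach $P_2$ after length $\pi$, rather than an interior regular point, and that the resulting family foliates $\Sigma$ without folds. I expect to argue that a full neighborhood of $S$ is swept by the images of geodesics issuing from the single point $N$, which is incompatible with the developing map being a local diffeomorphism there; the focal point is therefore forced to be the unique admissible singularity $P_2$, and the absence of self-intersections (Corollary 2.4) excludes folding. The trivial–monodromy subcase, by contrast, reduces to routine covering–space bookkeeping. Conceptually this developing–map argument is the geometric incarnation of the classification of \cite{Tr2}, to which the statement is equivalent.
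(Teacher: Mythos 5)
Your argument cannot be checked against a proof in the paper, because the paper gives none: Theorem 2.10 is quoted as equivalent to the main result of \cite{Tr2}, whose proof is analytic. So your synthetic proof must stand on its own, and it does not yet: the step you yourself flag as delicate is exactly where it breaks, and the justification you sketch for that step is wrong. You claim that having the geodesics issuing from $P_1$ sweep a full neighborhood of the developed antipode is ``incompatible with the developing map being a local diffeomorphism there.'' It is not: on the standard sphere, all geodesics issuing from a point focus after length $\pi$ at its antipode, a perfectly regular point, and the developing map is the identity. Focusing at a regular point is no contradiction at all. What actually forces the focal point to be singular is the cone angle: if every geodesic from $P_1$ (cone angle $\theta_1$) focuses at time $\pi$ at a point $Q$, then the map sending the initial direction at $P_1$ to the incoming direction at $Q$ is a local isometry from the circle of directions at $P_1$, of length $\theta_1$, to the circle of directions at $Q$; a local isometry between circles is a covering map, so $\theta_1$ is an integer multiple of the total angle at $Q$. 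Were $Q$ regular, this would give $\theta_1 \in 2\pi\mathbb{Z}$, i.e.\ $R = \mathrm{id}$, contradicting the hypothesis of the nontrivial-monodromy case. Without this (or an equivalent) argument, ``the focal point is therefore forced to be the unique admissible singularity $P_2$'' is an assertion, and the foliation of $\Sigma$, the equality $\theta_1 = \theta_2$, and the identification with $S_\Theta$ all rest on it.

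There are two further gaps in the same case. First, a nontrivial rotation $R$ fixes two antipodal points, and you conclude that $p_1,p_2$ are antipodal; but your axis argument equally allows $p_1 = p_2$, which you never exclude. Second, you assert that the minimizing geodesic $\delta$ develops onto a single meridian of length $\pi$; in fact its development is a great-circle arc from $p_1$ to $p_2$, so its length lies in $\pi + 2\pi\mathbb{Z}$ if the points are antipodal and in $2\pi\mathbb{Z}_{>0}$ if they coincide, and nothing you say excludes length $3\pi$. Both gaps close with one observation you never make: in curvature $+1$, a geodesic of length greater than $\pi$ whose interior points are regular has an interior conjugate point (Jacobi equation $J''+J=0$), hence is not length-minimizing even among nearby paths with the same endpoints; therefore $\delta$ has length at most $\pi$, which rules out $p_1=p_2$ and pins the length at exactly $\pi$. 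Note also that Corollary 2.4 excludes a single self-intersecting geodesic, not two distinct arcs of your meridian family crossing each other, so ``excludes folding'' needs its own short argument. The trivial-monodromy half of your proof --- cyclic covers, Riemann--Hurwitz, slit-and-glue --- is correct, and once the focusing step is repaired along the lines above, your route would be a complete geometric proof, genuinely different from the analytic one the paper cites.
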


A \textit{foliated digon $D_{\alpha}$ of angle $2\alpha\pi$} is the spherical surface with boundary obtained by cutting $S_{\alpha}$ along one of its geodesic arcs connecting the two singularities. Since they are foliated by geodesic arcs of length $\pi$, their existence is the main obstruction to get a decomposition theorem as existence of a maximal system of disjoint geodesic arcs. If there is a foliated digon in the decomposition, we can always draw additional geodesic arcs.\newline

We prove that digons $D_{\alpha}$ are the only possible digons in spherical geometry.

\begin{prop}
Every spherical polygon with two vertices whose sum of angles is not an integer multiple of $2\pi$ is a digon $D_{\alpha}$.
\end{prop}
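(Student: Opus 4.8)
The plan is to reduce the statement to the classification of Theorem 2.9 by a doubling construction. Let $S$ be a spherical polygon with exactly two vertices $A$ and $B$, of total angles $\theta_1$ and $\theta_2$, so that its boundary consists of two geodesic arcs $\gamma_1,\gamma_2$, each joining $A$ to $B$; the hypothesis is that $\theta_1+\theta_2\notin 2\pi\mathbb{Z}$. I would form the double $\widehat{S}=S\cup_{\partial S}\overline{S}$ obtained by gluing $S$ to a mirror copy along the whole boundary. Since $\partial S$ is geodesic, the metric extends smoothly across the gluing locus, the images of $\gamma_1$ and $\gamma_2$ becoming interior geodesic arcs of $\widehat{S}$. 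Thus $\widehat{S}$ is a closed spherical surface of genus zero whose only possible conical singularities are $A$ and $B$, of angles $2\theta_1$ and $2\theta_2$ respectively.

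I would then apply Theorem 2.9 to $\widehat{S}$. If both $A$ and $B$ are genuine singularities, $\widehat{S}$ is either a foliated singular sphere $S_\alpha$ or a pearl row $P_{m,L}$. In both families the two singularities carry the same angle, which already forces $2\theta_1=2\theta_2$, i.e.\ $\theta_1=\theta_2$. In the pearl row case the common angle is $2m\pi$, so $\theta_1+\theta_2=2m\pi$, contradicting the hypothesis; hence $\widehat{S}=S_\alpha$ with $\alpha=2\theta_1=2\theta_2$. The remaining degenerate configurations are excluded in the same spirit: if some $\theta_i=\pi$ then the corresponding point of $\widehat{S}$ is regular, and either $\widehat{S}$ is a smooth genus-zero sphere (forcing $\theta_1=\theta_2=\pi$ and $\theta_1+\theta_2=2\pi$, excluded) or $\widehat{S}$ carries a single singularity, which by Corollary 2.2 must then have angle $2\pi$, again contradicting $\theta_i\neq\pi$. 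So in every admissible case $\widehat{S}=S_\alpha$.

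It remains to recognize $S$ inside $S_\alpha$. The arcs $\gamma_1,\gamma_2$ are geodesic arcs joining the two conical singularities of $S_\alpha$. Developing $S_\alpha$ from one singularity, the monodromy is the rotation whose axis passes through both singularities, so every geodesic issuing from one singularity meets the other exactly at arc length $\pi$; consequently $\gamma_1$ and $\gamma_2$ are two leaves of the length-$\pi$ foliation of $S_\alpha$. These two leaves cut $S_\alpha$ into two lunes, and the isometric involution exchanging $S$ with its mirror copy maps one lune onto the other, so each carries exactly half of the angle at each singularity, namely $\theta_1=\theta_2=\alpha/2$. Finally, a lune of $S_\alpha$ bounded by two leaves is isometric to the surface obtained by cutting a foliated singular sphere along a single leaf, that is, to a foliated digon $D_\alpha$; this identifies $S$ with $D_\alpha$ and proves the statement.

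The main obstacle is the last paragraph: one must be certain that the boundary arcs become genuine length-$\pi$ leaves of the foliation of $S_\alpha$ (rather than some other geodesic connection of the two singularities) and that the region they bound is exactly a foliated digon. This is where the explicit geometry of the foliated singular sphere and the monodromy-axis argument are essential, and where the degenerate low-singularity cases must be discarded carefully using the angle-sum hypothesis together with Corollary 2.2.
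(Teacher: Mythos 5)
Your proof is correct and takes essentially the same route as the paper: duplicate the digon, glue along the boundary to get a closed genus-zero spherical surface, invoke the classification of two-cone-point spheres (Theorem 2.10 in the paper, not 2.9), and cut back to identify the original polygon as a foliated digon. The only divergence is the gluing pattern: the paper identifies each vertex with the \emph{other} vertex of the copy, so both resulting cone angles equal $\theta_1+\theta_2$ and the hypothesis excludes pearl rows and smooth points in one stroke, whereas your mirror double produces angles $2\theta_1,2\theta_2$ and hence requires the extra degenerate-case analysis via Corollary 2.2 and the deduction $\theta_1=\theta_2$, which you carry out correctly (modulo the harmless indexing slip that your lune is the digon $D_{\theta_1}=D_{\alpha/2}$ rather than $D_\alpha$).
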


\begin{proof}
For any such digon with two singularities of angles $\theta$ and $\eta$, we duplicate it and get another digon with two singularities of angles $\theta$ and $\eta$. We glue sides on each other in order to get a sphere with two conical singularities of angle $\theta+\eta$. Theorem 2.10 then implies that the obtained sphere $S_{\theta+\eta}$. Cutting along two disjoint geodesic arcs between the two conical singularities in order to get isometric digons provides two digons isometric to $D_{\frac{\theta+\eta}{2}}$.
\end{proof}

\section{Irreducible polygons}

In this section, we characterize spherical polygons in which no additional geodesic arc can be drawn.

\begin{defn}
A spherical polygon $S$ is \textit{irreducible} if every geodesic arc of $S$ whose endpoints are conical singularities belongs to its boundary.
\end{defn}

\subsection{Half-spheres}

A first family of irreducible spherical polygons is formed by half-spheres. They highlight a clear difference with the case of flat surfaces with conical singularities. Here, for any number $n \geq 1$, there are irreducible polygons (i.e. without diagonals) with $n$ sides.

\begin{prop} In a half-sphere bounded by a great circle with at least one marked point and no pair of antipodal marked points, there is no geodesic arc joining singularities outside the boundary.
\end{prop}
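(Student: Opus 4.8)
The plan is to argue by contradiction using the spherical projection and the elementary geometry of great circles on the standard sphere. Let $S$ be a half-sphere bounded by a great circle, carrying at least one marked point and no pair of antipodal marked points on its boundary. Since $S$ is a spherical polygon of genus zero, it has trivial monodromy, so its developing map gives a spherical projection. For a half-sphere this projection is an isometry onto a closed hemisphere $\overline{H}$ of the standard sphere, sending the boundary great circle of $S$ to the bounding great circle of $\overline{H}$ and the marked points to distinct, non-antipodal points on that circle. Under this identification a geodesic arc of $S$ joining two singularities corresponds to an arc of a great circle on the sphere that stays inside $\overline{H}$ and connects two of these boundary points.

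First I would observe that the only singularities of $S$ are the marked points on its boundary (a half-sphere has no interior conical points). So any candidate geodesic arc $\gamma$ joining singularities "outside the boundary" must connect two distinct marked points $A$ and $B$ on the bounding circle, while its interior lies in the open hemisphere $H$. The key geometric fact is that on the standard sphere there is a unique great circle through two non-antipodal points $A$ and $B$, and this great circle meets the bounding circle of $\overline{H}$ in exactly $A$ and $B$ themselves. Consequently the great circle through $A$ and $B$ is cut by the boundary circle into exactly two arcs, one lying in each of the two open hemispheres. The arc lying in the closure of $H$ is precisely the boundary arc of the great circle, whose interior is contained in $H$, but this is the shorter or longer boundary geodesic, not an honest diagonal: I would verify that the portion of this great circle contained in $\overline{H}$ coincides, up to the two endpoints, with a boundary arc, so it cannot be an arc in the interior connecting $A$ to $B$.

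Then I would handle the case $A$ and $B$ antipodal, which is excluded by hypothesis: if $A$ and $B$ were antipodal there would be infinitely many great circles through them and one could route an arc through $H$, producing a genuine geodesic diagonal; this is exactly why the non-antipodality assumption is essential, and I would point out explicitly where it is used. With antipodal pairs ruled out, uniqueness of the great circle forces any great-circle arc in $\overline{H}$ between two boundary marked points to be a boundary arc, so no geodesic arc can join two singularities through the open hemisphere.

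The main obstacle I expect is making precise the claim that a geodesic arc of $S$ whose endpoints are marked points must have its interior lying in the open hemisphere $H$ (rather than touching the boundary circle). To secure this I would use Corollary 2.4: since $S$ is a spherical surface of genus zero with no interior conical singularities and a single boundary component, it contains no self-intersecting geodesic curve, so a geodesic arc cannot re-enter the boundary and its interior is a genuine embedded arc in the open hemisphere. Combined with the uniqueness of great circles through non-antipodal points, this pins down that every such arc is forced onto the boundary, completing the argument. The rest is a short verification that the two boundary arcs exhaust the great circle's intersection with $\overline{H}$.
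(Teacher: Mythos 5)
Your proposal is correct and takes essentially the same route as the paper: a geodesic arc is part of a great circle, and the unique great circle through two non-antipodal points of the bounding circle is that bounding circle itself, so any arc joining two marked points must lie in the boundary. The extra scaffolding you add (the spherical projection via trivial monodromy, Corollary 2.4, and the momentary claim that the great circle through $A$ and $B$ is ``cut by the boundary circle into two arcs'' --- a vacuous statement here, since by uniqueness that great circle \emph{is} the boundary circle) is unnecessary but harmless, as your final conclusion corrects it.
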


\begin{proof}
In any spherical map, a geodesic arc is a part of a great circle. Between two points of the boundary that are not antipodal, the only great circle they belong to is the boundary itself.
\end{proof}

If two marked points of the boundary are antipodal, then the polygon is in fact foliated digon $D_{1/2}$ possibly with marked points on its sides. In particular, such a spherical polygon has infinitely many disjoint geodesic arcs.

\subsection{Characterization of irreducible spherical polygons}

The following lemma uses implicitly the idea of cut-locus to exhibit new geodesic arcs to decompose our spherical surfaces.

\begin{lem}
If there are two (regular or conical) points $x$ and $y$ in a spherical surface $S$ such that there is a geodesic arc $\gamma_{0}$ of length $\pi$ between $x$ and $y$, then $\gamma_{0}$ belongs to a $1$-parameter family of geodesic arcs of length $\pi$ between $x$ and $y$. This family is an embedding of a maximal foliated digon $D_{\alpha}$ into $S$. One of the three following proposition holds:\newline
(i) Surface $S$ is a singular sphere $S_{\alpha}$ or a foliated digon $D_{\alpha}$.\newline
(ii) One extremal element of the family contains a conical singularity while the other coincides with a boundary arc of $S$.\newline
(iii) Each of the two extremal elements of the family contains a conical singularity. If digon $D_{\alpha}$ is such that $\alpha<\frac{1}{2}$, then these two conical singularities are joined by a geodesic arc that belongs to the image of $D_{\alpha}$.\newline
Besides, if $D_{\alpha}$ is such that $\alpha \geq \frac{1}{2}$, then its image contains a half-sphere embedded in $S$ whose boundary is a (possibly broken) closed geodesic that contains both $x$ and $y$
\end{lem}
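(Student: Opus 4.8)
The plan is to develop the arc $\gamma_0$ into the standard sphere and exploit the fact that a geodesic arc of length $\pi$ develops to a great semicircle, whose two endpoints are antipodal. First I would take a developing map on a simply connected neighborhood of the interior of $\gamma_0$ (whose interior consists of regular points), sending $\gamma_0$ to a meridian of $\mathbb{S}^2$ with $x \mapsto X$ and $y \mapsto Y$ for antipodal $X,Y$. Rotating this meridian about the axis $XY$ produces a pencil of great semicircles through $X$ and $Y$; for small angles these lift back through the local inverse of the developing map to geodesic arcs $\gamma_\theta$ of length $\pi$ from $x$ to $y$ in $S$. This is the $1$-parameter family claimed, and each leaf automatically has length $\pi$ and joins $x$ to $y$ since it is a rotation of a meridian.

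Next I would extend the parameter $\theta$ to a maximal interval. Because the endpoints $x,y$ are common to all $\gamma_\theta$ and the rotations fix the poles, the whole swept region carries a single developing map onto a spherical lune of angle $2\pi\alpha$, where $2\pi\alpha$ is the total sweep angle; distinct meridians of a lune meet only at the poles, so this map is a local isometry that I would argue is injective up to the first return, giving an embedding of a foliated digon $D_\alpha$ (in the closed case $S_\alpha$ the two extremal leaves are glued and the map becomes the corresponding quotient). Maximality means the extension halts at each end only when a leaf meets an obstruction, and this is the cut-locus input: each extremal leaf either passes through a conical singularity (so the rotation cannot be continued without the leaf breaking) or coincides with a boundary arc of $S$ (so there is no room to continue into $S$), unless the family closes up entirely. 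Sorting the two ends by these alternatives yields exactly the trichotomy: the family sweeps all of $S$ (the closed sphere $S_\alpha$, or $S=D_\alpha$ with both extremal leaves as its boundary) in case (i); one cone point and one boundary arc in case (ii); two cone points in case (iii).

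In case (iii) with $\alpha<\tfrac12$ I would produce the diagonal by working inside the developed lune. The two conical singularities $p,q$ lie in the interiors of the two extremal meridians, which are at angular distance $2\pi\alpha<\pi$. I would connect their developed images $P,Q$ by the short great-circle arc: since the angular width of the lune is strictly less than $\pi$, this arc has length less than $\pi$ and stays inside the lune, hence pulls back to a geodesic arc from $p$ to $q$ contained in the image of $D_\alpha$. For the final assertion, when $\alpha\ge\tfrac12$ the lune has width at least $\pi$, so it contains a sub-lune of width exactly $\pi$; a width-$\pi$ lune is a hemisphere, and its two bounding meridians (at angular distance $\pi$) join at the poles into a single great circle. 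Pulling back gives a half-sphere embedded in $S$ whose boundary is a closed geodesic through $x$ and $y$, broken precisely at any conical singularities it happens to pass through.

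The main obstacle I expect is twofold and genuinely geometric rather than topological. The first difficulty is making the embedding rigorous: the developing map onto the lune is a priori only a local isometry, so I must rule out a premature self-overlap of the leaves before the first return, i.e. show that the swept region injects into $S$. The second, more delicate, difficulty is the spherical estimate in case (iii): verifying that the connecting great-circle arc between the two cone points truly remains inside the lune and has length $<\pi$, and seeing sharply why $\alpha<\tfrac12$ (width $<\pi$) is exactly the threshold guaranteeing this, since for $\alpha\ge\tfrac12$ the short connecting arc can leave the digon and the diagonal construction must be replaced by the half-sphere extraction.
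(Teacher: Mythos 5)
Your proposal is correct and follows essentially the same route as the paper's proof: develop $\gamma_{0}$ as a meridian, rotate it to sweep out a maximal $1$-parameter family, classify the obstructions at the two extremal leaves (closure of the family giving $S_{\alpha}$ or $D_{\alpha}$, coincidence with a boundary arc, or a conical singularity), then use convexity of the lune of angle $2\pi\alpha<\pi$ to produce the diagonal in case (iii), and extract a sub-lune of angle exactly $\pi$ to obtain the embedded half-sphere when $\alpha\geq\frac{1}{2}$. The two difficulties you flag (injectivity of the swept region before first return, and the containment of the short connecting arc in the lune) are precisely the points the paper itself passes over quickly, via the tangency argument for geodesics and the observation that both singularities lie in a common open half-sphere.
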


\begin{proof}
There is a spherical chart that contains entirely the interior points of $\gamma_{0}$. This geodesic arc appears as a meridian of the sphere. The neighborhood of this meridian contains other meridians that also correspond in $S$ to geodesic arcs of length $\pi$ between $x$ and $y$. Therefore, $\gamma_{0}$ belongs to a $1$-parameter family $\gamma_{t}$ of locally disjoint geodesic arcs of length $\pi$ (they are meridians) between $x$ and $y$.\newline
We consider a maximal interval for which two points of arcs of the family coincide and without any conical singularity other than $x$ and $y$. Three things can happen. First, two points may coincide in a point of tangency between two geodesics arcs of the family. Two geodesics cannot be tangent without being the same. This only happens if $S$ is in fact a singular sphere $S_{\alpha}$. Secondly, an extremal element of the family could be tangent to the boundary of $S$. Since the boundary of S is geodesic, using the same argument, we conclude that the extremal element coincides (at least partially) with a boundary arc of $S$. Finally, the extremal arc could be broken by a conical singularity.\newline
If there is one conical singularity on each extremal element of the family and $D_{\alpha}$ is such that $\alpha<\frac{1}{2}$, there is a spherical chart containing the digon and such that the two conical singularities belong to the same open half-sphere. Therefore, there is a portion in the great circle in the chart contained in $D_{\alpha}$ that joins the two singularities. This portion of great circle is embedded in $S$ as a geodesic arc.\newline
Finally, if we have $\alpha \geq \frac{1}{2}$, then digon $D_{1/2}$ (which is just the half-sphere) is embedded in $S$. Its boundary is a great circle passing through $x$ and $y$. There can be other conical singularities or portions of the boundary of $S$ if the boundary of the image of $D_{1/2}$ coincides with one of the two extremal elements of the maximal family of geodesic arcs.
\end{proof}

The following proposition states a first constraint on vertices of irreducible spherical polygons.

\begin{prop}
For any conical singularity $M$ of angle at least $\pi$ in an irreducible spherical polygon $S$ which is not a half-sphere, the length of any geodesic curve starting from $M$ is always strictly smaller than $\pi$.
\end{prop}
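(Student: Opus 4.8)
My plan is to argue by contradiction. Suppose some geodesic curve issuing from $M$ has length at least $\pi$, and let $\gamma_0$ be its initial subarc of length exactly $\pi$, joining $x:=M$ to a point $y$. Since $S$ is a spherical polygon, Corollary~2.4 rules out self-intersections, so $\gamma_0$ is an embedded length-$\pi$ geodesic arc between $x$ and $y$ (note $y\neq x$, as $\gamma_0$ develops to a half great circle, sending $y$ to the antipode of the development of $M$). I then apply Lemma~3.3 to the pair $(x,y)$: the arc $\gamma_0$ lies in a maximal one-parameter family realizing an isometric embedding of a foliated digon $D_\alpha$ into $S$, with $x,y$ as its vertices, and one of the alternatives (i)--(iii), together with the half-sphere addendum when $\alpha\ge\tfrac12$, holds. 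The digon $D_\alpha$ has vertex angle $2\pi\alpha$ (it is the half-sphere exactly when $\alpha=\tfrac12$), and since the embedding is isometric with $M$ a vertex, this sector of angle $2\pi\alpha$ sits inside the cone at $M$; thus $2\pi\alpha\le\mathrm{angle}(M)$, and it is through this inequality that the hypothesis $\mathrm{angle}(M)\ge\pi$ enters.

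I first discard alternative (i): a spherical polygon has nonempty geodesic boundary, so it is not the closed sphere $S_\alpha$, and it is not a foliated digon either, since the interior leaves of $D_\alpha$ are geodesic arcs between its two conical vertices, contradicting irreducibility --- the only exception being $D_{1/2}$, the half-sphere, excluded by hypothesis. I then treat the regime $\alpha\ge\tfrac12$: here Lemma~3.3 embeds a half-sphere $H\subseteq S$ whose boundary is a (possibly broken) closed geodesic through $x$ and $y$. If $H=S$ then $S$ is a half-sphere, contrary to hypothesis. Otherwise $H\subsetneq S$; as $\partial H$ and $\partial S$ are boundary circles of embedded disks, $\partial H$ is not contained in $\partial S$, so a subarc of $\partial H$ runs through the interior of $S$. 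Since $\partial H$ is geodesic except at conical singularities and all conical singularities of a polygon lie on $\partial S$, this interior subarc is a geodesic arc between conical singularities off the boundary --- a diagonal --- again contradicting irreducibility.

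The remaining regime, $\alpha<\tfrac12$ in alternatives (ii) and (iii), is where the angle hypothesis is indispensable, and I expect it to be the main obstacle. In both alternatives some extremal leaf of the family is broken by a conical singularity $V\neq x,y$, giving a geodesic arc from $M$ to $y$ through $V$. The difficulty is to show that the subarc from $M$ to $V$ is genuinely interior rather than part of $\partial S$: for a vertex of angle $<\pi$ the arc extracted this way can fall on a boundary side --- witness the side opposite a vertex of a large irreducible spherical triangle --- so the conclusion simply fails without a hypothesis on $\mathrm{angle}(M)$. The hypothesis enters as the now-strict inequality $2\pi\alpha<\pi\le\mathrm{angle}(M)$: the angular sector occupied by the embedded digon at $M$ is strictly narrower than the full cone, so its two bounding leaves cannot simultaneously be the two boundary sides of $S$ at $M$. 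Hence the extremal leaf carrying $V$ leaves $M$ in a direction strictly between those boundary sides, i.e. strictly into the interior of $S$, and the subarc from $M$ to $V$ is an interior geodesic arc joining the conical singularities $M$ and $V$ --- a diagonal --- contradicting irreducibility.

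Every alternative of Lemma~3.3 has thus been reduced to an impossibility, so no geodesic from $M$ attains length $\pi$. The step I anticipate as genuinely delicate is the last: translating the strict angular bound $2\pi\alpha<\pi\le\mathrm{angle}(M)$ into the precise geometric assertion that the singularity-breaking extremal leaf emanates strictly into the interior of $S$, which is what certifies the extracted arc as a true diagonal rather than a boundary side; by comparison the half-sphere case is soft, depending only on the topology of the embedding $H\subseteq S$.
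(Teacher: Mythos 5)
Your proof is correct and follows essentially the same route as the paper: apply Lemma~3.3 to a length-$\pi$ initial segment, examine the extremal arcs of the resulting maximal digon, and use the hypothesis $\mathrm{angle}(M)\ge\pi$ to compare the digon's angle at $M$ with the full cone angle, deriving a contradiction with irreducibility either from an interior diagonal or from an embedded half-sphere. The only difference is organizational: the paper argues that irreducibility forces both extremal arcs to hug the boundary at $M$, hence $\alpha\ge\tfrac12$ and the half-sphere conclusion, while you run the same implication in contrapositive form as a case split on $\alpha<\tfrac12$ versus $\alpha\ge\tfrac12$.
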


\begin{proof}
We assume there is such geodesic curve $\gamma$ of length at least $\pi$. Lemma 3.3 implies that $\gamma$ belongs to the embedding of a maximal digon $D_{\alpha}$ (a family of geodesic arcs of length $\pi$ with the same end). The two extremal arcs of the family either coincide partially with one of the boundary arcs of $S$ incident to $M$ or contain a singularity. Both of the two extremal arcs cannot coincide with boundary arcs of $S$ unless $S$ is itself a digon. Therefore, at least one of the extremal arc contains a singularity. By hypothesis, the geodesic arc joining $P_{i}$ to this singularity belongs to the boundary of $S$. Consequently, the maximal digon contains the whole angular sector around $M$ and its angle is at least $\pi$. Therefore, it contains a half-sphere bounded by a (possibly broken) closed geodesic. Since $S$ is irreducible, it should coincide with the half-sphere. We get the adequate contradiction.
\end{proof}

The latter proposition implies an impossibility result for irreducible digons.

\begin{cor}
Irreducible spherical polygon that are not half-spheres have at least three vertices of angle different from $\pi$.
\end{cor}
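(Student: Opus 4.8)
The plan is to prove Corollary 3.5 by combining the structural constraint on high-angle vertices from Proposition 3.4 with the digon classification (Proposition 2.11) and the Gauss--Bonnet formula, arguing by contradiction against an irreducible polygon having at most two vertices of angle different from $\pi$.

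\medskip

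First I would recall that, by Lemma 2.9, any spherical polygon whose spherical projection lies in an open half-sphere has a convex hull that is a genuine polygon with at least three vertices of angle strictly less than $\pi$, each of which is the projection of a vertex of $S$ of the same angle. So it suffices to rule out the two degenerate possibilities for an irreducible polygon $S$ that is not a half-sphere: that $S$ has no vertex of angle different from $\pi$ at all, or that it has exactly one or two such vertices. The vertices of angle exactly $\pi$ are marked points on geodesic boundary arcs, so a polygon with no vertex of angle different from $\pi$ would be a half-sphere (all its boundary lies along a single great circle with marked points), which is excluded by hypothesis; this handles the zero case.

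\medskip

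For the main case I would suppose $S$ has exactly two vertices of angle different from $\pi$, say of angles $\theta$ and $\eta$, and derive a contradiction. The idea is that the boundary then consists of two geodesic arcs (possibly broken at angle-$\pi$ marked points, but geodesically straight through them) joining these two genuine vertices, so $S$ is a topological digon with two corners. Applying Proposition 2.11, such a digon must be a foliated digon $D_{\alpha}$ with $\alpha = \frac{\theta+\eta}{2}$, provided $\theta+\eta$ is not an integer multiple of $2\pi$. But a foliated digon is foliated by a continuous family of disjoint geodesic arcs of length $\pi$ between the two singularities, and any such interior arc is a diagonal, contradicting irreducibility. The remaining subcase, where $\theta+\eta$ is an integer multiple of $2\pi$, forces at least one of $\theta,\eta$ to be at least $\pi$; I would then invoke Proposition 3.4, which says that from a conical singularity of angle at least $\pi$ in such an irreducible polygon every geodesic curve has length strictly less than $\pi$. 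This contradicts the existence of the length-$\pi$ geodesic arcs produced inside the digon, closing the argument.

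\medskip

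The main obstacle I anticipate is the bookkeeping around vertices of angle exactly $\pi$: one must argue carefully that a polygon with at most two vertices of angle $\neq\pi$ really behaves as a digon, since the angle-$\pi$ marked points do not bend the boundary geodesically and hence do not create genuine corners. Making precise that the boundary decomposes into exactly two geodesic sides between the two true vertices — and that Proposition 2.11 applies verbatim with marked points allowed on the sides — is the delicate point; once that reduction is in place, the contradiction via the foliating length-$\pi$ arcs (either directly violating irreducibility, or violating Proposition 3.4) is immediate.
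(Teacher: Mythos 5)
Your overall strategy --- discard the angle-$\pi$ marked points, reduce to a genuine digon with angles $\theta,\eta$, then split according to whether $\theta+\eta$ is an integer multiple of $2\pi$ --- is exactly the paper's strategy, and your first subcase (Proposition 2.11 forces a foliated digon, whose interior length-$\pi$ arcs contradict irreducibility) is correct and matches the paper. The gap is in the second subcase. When $\theta+\eta$ \emph{is} an integer multiple of $2\pi$, Proposition 2.11 does not apply, so nothing tells you the digon is foliated: there are no ``length-$\pi$ geodesic arcs produced inside the digon'' to contradict. Proposition 3.4 by itself only yields the constraint that every geodesic curve from the large-angle vertex has length $<\pi$; that is an inequality, not a contradiction, and your argument stops there. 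Note that two-vertex polygons with angle sum in $2\pi\mathbb{Z}$ genuinely exist and are not foliated --- e.g.\ the standard sphere slit along an arc of length $L<\pi$ has two vertices, each of angle $2\pi$ --- so this subcase cannot be dismissed; one must actually exhibit a diagonal or a forbidden long geodesic (for the slit sphere, the diagonal is the complementary arc of the great circle containing the slit).

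This subcase is precisely where the paper does real work. It doubles $X$ to obtain a closed sphere $Y$ whose (at most two) conical angles equal $\theta+\eta\in 2\pi\mathbb{Z}$; by Theorem 2.1 (trivial monodromy), $Y$ is a cover of the standard sphere ramified at its singularities. Proposition 3.4 shows both boundary arcs of $X$ have length $<\pi$, and the covering structure forces the two arcs to project onto the same spherical arc of length $<\pi$; from this the paper deduces that $\theta$ and $\eta$ are each integer multiples of $2\pi$, so each copy of $X$ projects \emph{onto} the standard sphere, and hence some geodesic curve issued from one of the two vertices has length at least $2\pi$ --- contradicting Proposition 3.4. Without this covering/projection argument (or some substitute producing a diagonal or a long geodesic), your proof does not close. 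A minor further omission: you announce the case of exactly one vertex of angle $\neq\pi$ but never treat it; it follows from Corollary 2.3 (monogons are half-spheres), which the paper invokes at the end of its proof.
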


\begin{proof}
If a spherical polygon with marked points on its boundary is irreducible, then replacing the marked points by regular points we also get an irreducible spherical polygon. Therefore, it is enough to prove the proposition for polygons without marked points. We consider an irreducible spherical polygon $X$ with exactly two vertices $A$ and $B$ of angles $\alpha,\beta$ (and which is not a half-sphere). We duplicate $X$ and get another digon such that the boundary arcs from $A$ to $B$ and $B$ to $A$ are isometric. Gluing these two polygons provides a sphere $Y$ with at most two conical singularities $C$ and $D$ (both of angle $\alpha+\beta$). If $\alpha+\beta$ is not an integer multiple of $2\pi$, Proposition 2.11 already proved that $X$ is a foliated digon (so it is not irreducible). If $\alpha+\beta$ is an integer multiple of $2\pi$, then the angles of $Y$ are integer multiple of $2\pi$ and $Y$ is a ramified cover of the standard sphere. Besides, among $\alpha$ and $\beta$, at least one of them is at least $\pi$. Without loss of generality, we assume $\alpha \geq \pi$. Proposition 3.4 then implies that every geodesic curve starting from $A$ has a length strictly smaller than $\pi$. Therefore, the two boundary arcs of $X$ are strictly smaller than $\pi$. Since $Y$ is a ramified cover of the standard sphere, we get that the projection of the two boundary arcs is the same geodesic arc of length strictly smaller than $\pi$. Consequently, $\alpha$ and $\beta$ are themselves multiple integers of $2\pi$ and each version of $X$ has a surjective projection on the standard sphere. Therefore, some geodesic curves starting from $A$ or $B$ have a length which is at least $2\pi$. We get the adequate contradiction.\newline
Corollary 2.3 also implies that monogons are half-spheres.
\end{proof}

Like triangles in the flat case, irreducible spherical polygons have a well-defined notion of opposite side for a vertex.

\begin{lem}
In an irreducible spherical polygon, every geodesic curve starting from a conical singularity eventually crosses a boundary arc. Besides, every geodesic curve (distinct from the boundary arcs) starting from the same vertex crosses the same boundary arc. To any vertex is thus assigned an opposite side.
\end{lem}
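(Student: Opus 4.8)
The plan is to establish the two assertions in turn: existence of a boundary crossing for every geodesic issued from a vertex, and then constancy of the boundary arc that is crossed. Throughout I use that every geodesic curve in $S$ is simple (Corollary 2.4) and that all conical singularities of a polygon lie on $\partial S$. I first dispose of the half-sphere: there every geodesic issued from a marked point $M$ is an arc of a great circle through $M$, hence meets the bounding great circle again exactly at the antipode $M^{*}$, which is regular (no antipodal marked points) and lies on a fixed boundary arc; so both assertions hold and I may assume henceforth that $S$ is not a half-sphere.

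For the existence of a crossing, let $\gamma$ be a geodesic issued from $M$ and extended maximally into the interior, and suppose it never meets $\partial S$. Then $\gamma(\pi)$ is a regular interior point $q$, and $\gamma|_{[0,\pi]}$ is a geodesic arc of length $\pi$ from the conical point $M$ to the regular point $q$. Note that this arc is not itself a diagonal, since its far endpoint $q$ is regular, so the contradiction must come from the geometry around it rather than from the arc directly. Applying Lemma 3.3 to $x=M$, $y=q$ embeds a maximal foliated digon $D_{\alpha}$ into $S$ with one vertex at $M$ and the other at $q$. If $\alpha\geq\tfrac12$, its image contains an embedded half-sphere bounded by a (possibly broken) closed geodesic; exactly as in the proof of Proposition 3.4, irreducibility then forces $S$ to coincide with this half-sphere, contrary to assumption. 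If $\alpha<\tfrac12$, an extremal leaf running along $\partial S$ would have to terminate at the regular interior point $q$, which is impossible, so each extremal leaf is instead broken by a genuine conical singularity of $S$ and we are in case (iii) of Lemma 3.3. That case produces a geodesic arc joining the two singularities inside the image of $D_{\alpha}$, transverse to the foliation; this is a diagonal not contained in $\partial S$, again contradicting irreducibility. Hence $\gamma$ meets $\partial S$, which proves the first assertion: a geodesic whose first meeting is at a vertex is itself a diagonal, hence a boundary arc, and otherwise it crosses a boundary arc.

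For the second assertion I parametrize the geodesics issued from $M$ by their initial direction $\phi\in[0,\theta_{M}]$, where $\theta_{M}$ is the cone angle at $M$ and the two endpoints $\phi=0,\theta_{M}$ are the boundary arcs incident to $M$. For $\phi\in(0,\theta_{M})$ the geodesic $\gamma_{\phi}$ is not a boundary arc, hence cannot end at a vertex (that would be a diagonal), so by the first assertion it crosses $\partial S$ transversally at a regular point $p(\phi)$; a tangential meeting is impossible since a geodesic tangent to the geodesic boundary coincides with a boundary arc. The map $\phi\mapsto p(\phi)$ is continuous and avoids the finite vertex set, so its image is a connected subset of $\partial S$ minus the vertices, that is, it lies in a single open boundary arc. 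This arc is the opposite side of $M$; in the half-sphere case it is simply the arc containing $M^{*}$.

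The hard part is the first assertion in the regime $\alpha<\tfrac12$. The conceptual difficulty is that the length-$\pi$ arc reaches a regular point $q$, so the sought contradiction is not that this arc is a forbidden diagonal but rather a structural one extracted from the maximal foliated digon it generates. The delicate verification is that the geodesic arc between the two singularities carried by the extremal leaves is genuinely a new diagonal and not one of the existing boundary arcs, together with the bookkeeping of the configurations in which an extremal leaf runs partly along $\partial S$ before being broken at a vertex; these are handled by combining Lemma 3.3 with irreducibility as above. The continuity of $p(\phi)$ and the absence of interior tangencies, needed for the second assertion, are routine consequences of the boundary being geodesic and of simplicity of geodesics.
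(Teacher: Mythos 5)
Your proof is correct in substance, but it reaches the first assertion by a genuinely different route than the paper. The paper argues directly on the long-time behaviour of the geodesic: by Corollary 2.4 it is simple, by looking at the spherical projection it cannot accumulate in the polygon, so a maximally extended geodesic must either hit a conical singularity (and then, by irreducibility, be a boundary arc) or cross a boundary arc. You instead suppose the geodesic avoids $\partial S$, extract the sub-arc of length $\pi$ ending at a regular interior point $q$, and feed it to Lemma 3.3: the maximal foliated digon then either contains an embedded half-sphere (case $\alpha\geq\tfrac12$, ruled out exactly as in Proposition 3.4) or, for $\alpha<\tfrac12$, carries a conical singularity on each extremal leaf and hence an interior diagonal by case (iii), contradicting irreducibility. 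Your route has the merit of avoiding the paper's rather informal non-accumulation claim and of reusing only the digon machinery that already drives Propositions 3.4 and 3.9; the price is the bookkeeping you flag, which does need to be done but goes through: (a) case (i) of Lemma 3.3 ($S$ itself a foliated digon) must also be excluded, which is immediate since the interior leaves of a foliated digon are non-boundary geodesic arcs; (b) your dismissal of case (ii) is too quick as stated, because an extremal leaf may coincide with $\partial S$ only \emph{partially} --- the correct observation is that, since the leaf ends at the interior point $q$, it must leave the boundary, and it can only do so at a vertex, so such a leaf carries a conical singularity in its relative interior and you land in the configuration of case (iii) anyway; (c) the diagonal produced by case (iii) crosses the interior leaves of the digon, hence passes through interior points of $S$, so it is genuinely not a boundary arc. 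For the second assertion your argument and the paper's are the same intermediate-value reasoning: the paper phrases it as ``to switch opposite sides, some intermediate geodesic would have to hit the vertex between the two arcs, i.e.\ be a diagonal,'' while you phrase it through continuity and connectedness of the image of the first-crossing map $\phi\mapsto p(\phi)$; the only point worth writing out there is that first-crossing times are bounded away from $0$ for directions in a compact subset of $(0,\theta_M)$, so crossings cannot slide into $M$ itself.
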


\begin{proof}
Following Corollary 2.4, geodesic curves starting from a vertex are not self-intersecting. Looking at its spherical projection (see Subsection 2.4), it is also clear they are not accumulating on any point of the spherical polygon (there is no accumulating geodesic curve in the standard sphere). The only possibility is that they hit a conical singularity (by hypothesis, they would then belong to the boundary) or cross one boundary arc. Among the geodesic curves starting from the same conical singularity, the extremal arcs are the only curves that hit another conical singularity. Therefore, the others cross the same boundary arc (in order to switch to another arc, there should be an intermediate arc that hits the vertex between the two boundary arcs). To any vertex $P_{i}$ is thus assigned an opposite boundary arc.
\end{proof}

The following proposition proves another nontrivial constraint on vertices of irreducible spherical polygons.

\begin{prop}
For any irreducible spherical polygon $S$, if there is a singularity $P_{i}$ of angle smaller than $\pi$ such that the length of any geodesic curve starting from it is strictly smaller than $\pi$, then $S$ is a spherical triangle with three angles strictly smaller than $\pi$ (with possibly marked points on its sides).
\end{prop}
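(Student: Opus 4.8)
The plan is to develop the whole polygon into an open half-sphere and recognize it there as a convex geodesic triangle. First I would record that $S$ is \emph{not} a half-sphere: every vertex of a half-sphere is a marked point of angle $\pi$, whereas $P_i$ has angle strictly smaller than $\pi$. This frees me to use the opposite-side structure of Lemma 3.6: every geodesic curve issued from $P_i$ crosses one and the same boundary arc $e$, so $S$ is swept by the fan of geodesics from $P_i$ to $e$. The two extremal geodesics of this fan are the only ones hitting another conical singularity, and by irreducibility (Definition 3.2) they are themselves boundary arcs; calling $B,C$ the singularities they reach, the boundary of $S$ is $P_iB\cup BC\cup CP_i$ with $e=BC$. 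Hence the fan sweeps all of $S$, and the angular width of the fan at $P_i$ equals the cone angle there, which is strictly smaller than $\pi$.

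The main step is to show that the spherical projection $Y$ of $S$ lies in an open half-sphere. I would work in geodesic polar coordinates $(\phi,r)$ around the image $p$ of $P_i$ in the developing image: the fan becomes a family of great-circle arcs issued from $p$, with directions $\phi$ ranging over an interval of width $w<\pi$ (the cone angle) and radii $r<\pi$ (the length hypothesis, which by compactness of $S$ yields uniform bounds $r\le\pi-\varepsilon$ and $w\le\pi-\varepsilon$). For the midray direction $\phi_m$ pick a point $q$ on that ray at distance $\rho$ from $p$; the spherical law of cosines gives $\cos\mathrm{dist}\bigl((\phi,r),q\bigr)=\cos r\,\cos\rho+\sin r\,\sin\rho\,\cos(\phi-\phi_m)$. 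Taking $\rho=\tfrac{\pi}{2}$ already places every point with $0<r<\pi$ and $|\phi-\phi_m|<w/2<\tfrac{\pi}{2}$ strictly inside the open hemisphere centered at $q$, since then $\cos\mathrm{dist}=\sin r\,\cos(\phi-\phi_m)>0$; a small inward adjustment of $\rho$, controlled by $\varepsilon$, brings $p$ itself strictly inside as well. The hard part will be exactly this quantitative claim — that the lune of apex angle $w<\pi$ with radii $<\pi$ fits inside one open half-sphere — and it rests on having \emph{strict}, hence (by compactness) uniform, inequalities for both the angle and the lengths; without strictness $p$ or the far points sit on the bounding circle.

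Finally I would invoke Lemma 2.9: since $Y$ lies in an open half-sphere $H$, its convex hull $Y'$ is a spherical polygon in $H$ with at least three vertices, all of angle strictly smaller than $\pi$, each the projection of a vertex of $S$ of the same angle. It then remains to identify $S$ with this triangle. The developing map is a local isometry into $H$, and I claim it is injective: by the fan description each point of $S$ has unique coordinates $(\phi,r)$, and two fan geodesics with $\phi_1\ne\phi_2$, $|\phi_1-\phi_2|<\pi$, lie on great circles meeting only at $p$ and its antipode, so their arcs of length $<\pi$ meet only at $p$; combined with Corollary 2.4 (no self-intersecting geodesics) this shows distinct points of $S$ have distinct images. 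Thus the developing map embeds $S$ isometrically onto $Y$, and the fan region is bounded by the two extremal rays and the geodesic arc $e$, so $Y$ is the geodesic triangle with vertices $p$ and the projections of $B,C$; hence $Y=Y'$ has exactly three vertices. Pulling back through the isometry, $S$ is the spherical triangle $P_iBC$ with all three angles strictly smaller than $\pi$, marked points on the sides being harmless angle-$\pi$ points through which geodesics pass.
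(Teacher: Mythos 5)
Your reduction to the fan of geodesic curves issued from $P_i$ follows the same opening strategy as the paper, but the step where you dispose of the extremal geodesics has a genuine gap, and it is exactly the point where the paper has to work. You assert that, since the two extremal geodesics of the fan hit conical singularities $B$ and $C$, irreducibility forces them to \emph{be} boundary arcs, so that $\partial S = P_iB \cup BC \cup CP_i$ and the fan sweeps all of $S$. Irreducibility (every geodesic arc \emph{between singularities} lies on the boundary) does not give this: an extremal geodesic may pass \emph{through} its first singularity $B$ and continue into the interior of $S$, which happens whenever the interior angle of $S$ at $B$ is at least $\pi$ (far from exotic here --- these are precisely the type II vertices that the paper later shows dominate irreducible polygons). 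In that case the sub-arc $P_iB$ is indeed a boundary arc, so no contradiction with irreducibility arises; yet the fan region is a proper subset of $S$, one of its sides contains $B$ in its interior, and your claims that ``the fan sweeps all of $S$'' and that $Y$ is the triangle with vertices $p$, $B$, $C$ collapse. Asserting that the fan sweeps $S$ is essentially circular: it presupposes the conclusion.

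The paper closes precisely this case with an argument your proposal is missing: because the cone angle at $P_i$ is $<\pi$ and every fan geodesic has length $<\pi$, the fan triangle is contained in an open half-sphere; hence, if a singularity sits in the interior of one extremal side, it can be joined by a geodesic arc \emph{inside the triangle} to the singularity terminating (or lying in the interior of) the other side. This produces a diagonal disjoint from the boundary, contradicting irreducibility. Only after both bad cases (a singularity interior to one side, or to both) are excluded this way does one land in your situation, where both extremal sides are genuine boundary arcs ending at $B$ and $C$ and $S$ equals the triangle $P_iBC$. Your remaining work (the law-of-cosines hemisphere embedding, Lemma 2.9, injectivity of the developing map) is then a more laborious route to the standard fact that a spherical triangle with apex angle $<\pi$ and two sides of length $<\pi$ has all angles $<\pi$; it is fine in itself, but it cannot start until the gap above is filled.
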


\begin{proof}
The conical singularities of the boundary of $S$ are $P_{1},\dots,P_{n}$. Their angles are $2\pi\beta_{1},\dots,2\pi\beta_{n}$.\newline
Geodesic curves starting from $P_{i}$ are disjoint from each other (otherwise there would be a digon with geodesic arcs smaller than $\pi$). The union of these geodesic curves is a triangle with geodesic boundary, isometric to a triangle of $D_{\beta_{i}}$ a vertex of which is one of the conical singularities of the digon. One side of the triangle is a portion of the geodesic arc opposite to $P_{i}$. The two others are geodesic arcs that, at least near $P_{i}$ coincide with the boundary arcs incident to $P_{i}$. They may contain conical singularities.\newline
Since $\beta_{i}<\frac{1}{2}$, if there is such conical singularities on both sides, the triangle is contained in an open half-sphere and the two conical singularities can clearly be joined by a geodesic arc, contradicting the hypothesis.\newline
If one side is a boundary arc while the other contains a conical singularity, then there is a diagonal (geodesic arc between two conical singularities) inside the triangle (which is still contained in a half-sphere).\newline
Finally, if both sides are boundary arcs, then $S$ is a spherical triangle with an angle strictly smaller than $\pi$ bounded by two sides of length strictly smaller than $\pi$.
\end{proof}

Propositions 3.4 and 3.7 allow to state a dichotomy result on vertices of irreducible spherical polygons that have not been classified yet.

\begin{cor}
We consider an irreducible spherical polygon $S$ which is not a half-sphere nor a spherical triangle whose angles are smaller than $\pi$. Then any vertex $P$ of $S$ belongs to one of the two following types:\newline
Type I: Angle is smaller than $\pi$ and there is a geodesic curve starting from $P$ of length at least $\pi$;\newline
Type II: Angle is at least $\pi$ and the length of every geodesic curve starting from $P$ is strictly smaller than $\pi$.
\end{cor}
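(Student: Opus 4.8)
The plan is to fix an arbitrary vertex $P$ of $S$ and to run a case analysis according to whether the angle at $P$ is strictly smaller than $\pi$ or at least $\pi$. A priori there are four combinations to consider, obtained by pairing each of these two possibilities for the angle with each of the two possibilities for the geodesic curves emanating from $P$ (namely, either some such curve has length at least $\pi$, or all of them have length strictly smaller than $\pi$). The content of the corollary is precisely that only the two ``diagonal'' combinations, labeled Type I and Type II, actually occur; so the work reduces to excluding the two remaining combinations, and for each of them exactly one of the previously established propositions does the job.

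Suppose first that the angle at $P$ is at least $\pi$. Since $S$ is assumed not to be a half-sphere, Proposition 3.4 applies directly to the vertex $P$ and guarantees that every geodesic curve starting from $P$ has length strictly smaller than $\pi$. This rules out the combination (angle at least $\pi$, some geodesic of length at least $\pi$) and places $P$ in Type II. The hypothesis that $S$ is not a half-sphere is exactly the one required by Proposition 3.4, so no further verification is needed here.

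Suppose now that the angle at $P$ is strictly smaller than $\pi$. Here I would argue by contradiction: if every geodesic curve starting from $P$ had length strictly smaller than $\pi$, then $P$ would be a singularity satisfying the hypothesis of Proposition 3.7, and that proposition would force $S$ to be a spherical triangle with three angles strictly smaller than $\pi$ (possibly with marked points on its sides). This contradicts the standing assumption that $S$ is not such a triangle. Hence some geodesic curve starting from $P$ must have length at least $\pi$, which places $P$ in Type I and excludes the last remaining combination.

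The substantive geometric input has already been spent in Propositions 3.4 and 3.7; this corollary is a clean logical recombination of them. The only point demanding care is to check that the two exclusion hypotheses in the statement (not a half-sphere, and not a triangle with all angles below $\pi$) match exactly the hypothesis of Proposition 3.4 and the conclusion of Proposition 3.7, respectively. I expect no genuine obstacle beyond this matching of hypotheses.
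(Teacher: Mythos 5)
Your proposal is correct and matches the paper exactly: the paper offers no separate argument for this corollary, stating only that it follows from Propositions 3.4 and 3.7, which is precisely the case split you carry out (angle at least $\pi$ handled by Proposition 3.4 under the non-half-sphere hypothesis, angle strictly less than $\pi$ handled by contradiction via Proposition 3.7 under the non-triangle hypothesis). Your explicit check that the two exclusion hypotheses line up with the hypotheses/conclusions of those propositions is the only content needed, and it is done correctly.
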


We also have a precise result on the distribution of the vertices of the two types in the cyclic order.

\begin{prop}
Let $S$ be an irreducible spherical polygon with $n\geq 3$ singularities $P_{1},\dots,P_{n}$. If $S$ is not a half-sphere nor a spherical triangle whose angles are smaller than $\pi$, then there are exactly two consecutive vertices of type I. The $n-2$ other vertices are of type II. Besides, the boundary arc between the two vertices of type I is the opposite side of every vertex.
\end{prop}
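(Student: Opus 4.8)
Throughout I use the dichotomy of Corollary 3.8: since $S$ is neither a half-sphere nor a spherical triangle with all angles $<\pi$, every vertex is of Type I (angle $<\pi$ with some issuing geodesic of length $\ge\pi$) or of Type II (angle $\ge\pi$ with all issuing geodesics of length $<\pi$), and I keep in mind the opposite-side assignment of Lemma 3.6. The strategy is to isolate one distinguished boundary arc $c$ and prove that it is simultaneously the opposite side of every vertex and the arc joining the two Type I vertices. Once this is done, all three assertions follow formally: a single arc has exactly two endpoints, those endpoints are adjacent, and, as I explain next, a vertex carries an incident opposite side precisely when it is an endpoint of $c$.

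First I would record the easy half of the dichotomy that drives the count: a Type II vertex has opposite side not incident to it. Indeed, by Proposition 3.4 every geodesic issuing from a vertex of angle $\ge\pi$ has length $<\pi$, whereas a geodesic that returned to an incident arc would have to meet that arc's great circle away from the vertex and hence travel a distance $\ge\pi$. Consequently, once a common opposite arc $c$ is produced, its two endpoints (for which $c$ is incident) are automatically of Type I. The reverse implication, that there is no further Type I vertex, is the subtle point and is deferred to the last step.

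To produce $c$ I would start from a Type I vertex $A$. Its existence I would obtain from the Gauss--Bonnet turning identity $\sum_j(\pi-2\pi\beta_j)=2\pi-\mathrm{Area}(S)$: were all angles $\ge\pi$ the area would be at least $2\pi$, and the long geodesics this forces would, through the regime $\alpha\ge\tfrac12$ of Lemma 3.3, embed a half-sphere, so irreducibility would make $S$ a half-sphere, against the hypothesis. Applying Lemma 3.3 to a length-$\pi$ geodesic issuing from $A$ embeds a maximal foliated digon $D_\alpha$ with a pole at $A$ and opening $2\pi\alpha\le 2\pi\beta_A<\pi$, so $\alpha<\tfrac12$. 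I would then show that exactly one of the two incident arcs of $A$ is long, of length $\ge\pi$: this is the boundary arc with which one extremal meridian coincides, and I would name it $c$, while the other extremal meridian is broken at the adjacent Type II vertex, as in case (ii) of Lemma 3.3. Tracking the fan of $A$ as its geodesics pass the antipode of $A$, which lies on $c$, identifies $e(A)=c$ and exhibits the second endpoint of $c$ as a vertex of angle $<\pi$, that is, a second Type I vertex $B$ consecutive to $A$.

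It remains to propagate $c$ to every vertex and to forbid a third Type I vertex. For a Type II vertex $Q$ I would run its fan of opening $\ge\pi$ and argue, using Corollary 2.4 (no self-intersecting geodesics) together with the absence of interior diagonals, that it must land on $c$: any other target would either make two fans cross, producing a self-intersection, or leave room for a geodesic arc between two vertices off the boundary, contradicting irreducibility. A putative third Type I vertex $P$ would, by the easy dichotomy above, have incident opposite side and hence be an endpoint of its own distinguished arc; comparing the digon of $P$ with that of $A$ and using irreducibility to merge the two reconstructed arcs would force that arc to be $c$ and $P$ to be one of its endpoints, so $P\in\{A,B\}$. This propagation is the main obstacle: the fans here consist of geodesics of length $\ge\pi$ that pass the antipodal focal point before meeting the boundary, so the naive monotone correspondence between starting direction and landing point breaks down, and one must combine the foliated-digon picture of Lemma 3.3 with repeated use of irreducibility to pin every opposite side to the single arc $c$.
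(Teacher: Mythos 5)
Your skeleton (the Type I/Type II dichotomy, producing a distinguished arc $c$ from a Type I vertex via the maximal foliated digon of Lemma 3.3, and identifying its far endpoint as a second, consecutive Type I vertex) matches the paper's middle step, but two of the three load-bearing steps are not actually proved. First, your existence argument for a Type I vertex has an unjustified jump: Gauss--Bonnet does give $\mathrm{Area}(S)\ge 2\pi$ when all angles are $\ge\pi$, but nothing shows that large area forces a geodesic arc of length $\pi$, which is what you need in order to invoke Lemma 3.3 and embed a half-sphere. Indeed, under the assumption ``all vertices of Type II'' every geodesic issuing from a vertex has length $<\pi$ by definition, so the long arc would have to be found somewhere else, and you give no mechanism for producing it. The paper avoids this entirely: assuming all vertices are of Type II, it propagates the opposite-side assignment of Lemma 3.6 (vertices not incident to a given opposite side share that opposite side) to locate a vertex $P_j$ whose opposite side is the adjacent arc $P_{j+1}P_{j+2}$, and then builds a spherical triangle $P_jP_{j+1}A$ with two sides of length $<\pi$ enclosing an angle $<\pi$, which forces the angle at $P_{j+1}$ to be $<\pi$, contradicting Type II.

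Second, and more seriously, the uniqueness and propagation step --- that there is no third Type I vertex and that every vertex has opposite side $c$ --- is exactly the part you defer as ``the main obstacle,'' and the arguments you sketch for it would fail: Corollary 2.4 forbids a single geodesic curve from self-intersecting, so two fans issued from \emph{different} vertices crossing each other yields no contradiction, and ``merging the two reconstructed arcs'' of two digons is not an argument. The missing idea is again the opposite-side relation of Lemma 3.6, used combinatorially: all vertices in a maximal chain of consecutive Type II vertices share one opposite side; for an extremal vertex $P_i$ of such a chain whose neighbor $P_{i+1}$ is of Type I, that common opposite side is forced to be $P_{i+1}P_{i+2}$; applying this at both ends of the chain identifies the two boundary arcs bounding the chain, which is only possible if there is a single chain, hence a single pair of Type I vertices, whose own opposite side is then checked to be $c$ via the maximal digon (one extremal arc ends at the Type II neighbor, the other coincides with $c$). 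Without this, or an equivalent substitute, your proposal establishes only the existence of one consecutive pair of Type I vertices, not the ``exactly two'' claim nor the common opposite side.
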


\begin{proof}
We first suppose that every vertex of the polygon is of type II. Following Lemma 3.6, to any vertex $P_{i}$ is assigned an opposite side such that any geodesic curve starting from $P_{i}$ eventually crosses this boundary arc (and no other). These geodesic curves are disjoint and form a triangle $T_{i}$ (with geodesic boundary) in $S$. If there is a singularity $P_{i+1}$ in a side of $T_{i}$ incident to $P_{i}$, then the opposite sides of $P_{i}$ and $P_{i+1}$ are clearly the same. Therefore, every vertex that is not incident to the opposite side of $P_{i}$ has the same opposite side as $P_{i}$. Consequently, there is a vertex $P_{j}$ whose opposite side is $P_{j+1}P_{j+2}$. We consider a geodesic curve $P_{j}A$ with $A$ in $P_{j+1}P_{j+2}$ such that the angle between $P_{j}A$ and  $P_{j}P_{j+1}$ is strictly smaller than $\pi$. Since geodesic curves starting from $P_{j}$ have a length smaller than $\pi$, triangle $P_{j}P_{j+1}A$ has two sides of length smaller than $\pi$ bounding an angle also smaller than $\pi$. Therefore, this spherical triangle has all of its angles smaller than $\pi$ and $P_{j+1}$ is not of type II. Therefore, there always exists at least one vertex of type I.\newline
Let $M$ be a vertex of type I. There is a geodesic curve $\gamma$ of length $\pi$ that belongs to a family of geodesic curves forming a maximal foliated digon whose angle is that of $M$. The extremal arcs of the family cannot both coincide with boundary arcs ($S$ would be a digon). And there cannot be conical singularities in both of the extremal arcs (otherwise they could be joined by an additional geodesic curve inside the digon). Therefore, one extremal arc coincides with a boundary arc (whose length is thus at least $\pi$) so its vertex other than $M$ is also of type I). The other extremal arc contains a conical singularity whose angle is thus at least $\pi$. Therefore, it is of type II. Vertices of type I form pairs of consecutive vertices whose neighbors are of type II.\newline
Any vertex of type II belongs to a maximal chain of consecutive vertex of type II (in the cyclic order of the polygon) bounded by vertices of type I. The vertices of this chain have automatically the same opposite side. Besides, for an extremal vertex $P_{i}$ of the chain such that $P_{i+1}$ is of type I, it is easy to check that the opposite side of $P_{i}$ is $P_{i+1}P_{i+2}$. As a consequence, the boundary sides bounding the chain of vertices of type II are the same geodesic arc. There is only one maximal chain of vertices of type II and there is only one pair of vertices of type I.\newline
It still needs to be proven that the opposite side of the vertices of type I is the same side as the others. Since they are of type I, geodesic curves starting from them fill a maximal foliated digon (see the proof of Proposition 3.4) such that one extremal arc contains a conical singularity (its neighbor of type II) and the other coincides with the boundary arc between the two vertices of type I (which is thus of length at least $\pi$). Every geodesic curves starting from them finally cross this boundary arc (at the other end of the foliated digon).
\end{proof}

One part of the latter proof can also be deduced from a more general statement, called Drop lemma (see Lemma 8.2 in \cite{PP}). It follows from this lemma that a spherical polygon with one angle strictly smaller than $\pi$ and all other angles strictly larger than $\pi$ always contains a half-sphere.\newline

Spherical projection (see Subsection 2.4) highlights the specific properties of irreducible spherical polygons.

\begin{prop}
The spherical projection of an irreducible polygon $S$ is contained in a half-sphere. If $S$ is not a half-sphere nor a spherical triangle with angles smaller than $\pi$, the great circle bounding this half-sphere extends the projection of the boundary side $A_{1}A_{2}$ connecting the two vertices of type I in $S$. Besides, the length of $A_{1}A_{2}$ is strictly larger than $\pi$.
\end{prop}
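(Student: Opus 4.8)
The plan is to analyze the developing map (spherical projection) of an irreducible polygon $S$, splitting into cases according to the classification established so far. First I would dispose of the two exceptional cases: if $S$ is a half-sphere, its projection is literally a closed half-sphere and the statement is vacuous; if $S$ is a spherical triangle with all angles smaller than $\pi$, Lemma 2.9 already gives that its projection lies in an open half-sphere. So I may assume $S$ is of the generic type treated in Proposition 3.9, with exactly two consecutive vertices $A_1,A_2$ of type I and the remaining $n-2$ vertices of type II, and with $A_1A_2$ the common opposite side of every vertex.

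The heart of the argument is to understand the geometry of the side $A_1A_2$. Since $A_1$ and $A_2$ are both of type I, the boundary arc $A_1A_2$ is, by the proof of Proposition 3.9, one of the two extremal arcs of a maximal foliated digon whose geodesic arcs have length $\pi$; hence $A_1A_2$ is a boundary arc of length at least $\pi$. To upgrade ``at least $\pi$'' to ``strictly larger than $\pi$'', I would argue that if $A_1A_2$ had length exactly $\pi$ it would itself be a meridian of the foliating digon $D_\alpha$, forcing $A_1$ and $A_2$ to be antipodal on a great circle; then the extremal arc on the other side of the digon would also be a full geodesic of length $\pi$ with the same endpoints, and the half-sphere bounded by the resulting closed geodesic would be embedded in $S$ (the final clause of Lemma 3.3 with $\alpha\geq\frac12$), contradicting irreducibility unless $S$ is itself a half-sphere, which we have excluded. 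This pins down $\mathrm{length}(A_1A_2)>\pi$.

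Next I would produce the half-sphere containing the projection. Develop $S$ into the standard sphere. Since every vertex has $A_1A_2$ as its opposite side and geodesic curves starting from type-II vertices have length strictly smaller than $\pi$ (by definition of type II), every such geodesic, developed, stays within distance $\pi$ of its starting point and terminates on the developed image of $A_1A_2$. I would let $C$ be the great circle extending the developed geodesic arc supporting $A_1A_2$, and let $H$ be the open half-sphere on the side of $C$ into which these geodesics emanate. The claim is that the whole projection of $S$ lies in the closed half-sphere $\overline{H}$ and that $C$ extends the projection of $A_1A_2$. Because $A_1A_2$ has length strictly larger than $\pi$ while a geodesic arc confined to a closed half-sphere and lying on its bounding circle has length at most $\pi$, the developed image of $A_1A_2$ must be a sub-arc of $C$ strictly containing a half-circle, which is exactly the assertion that $C$ extends the projection of $A_1A_2$; this also shows the two endpoints $A_1,A_2$ are not antipodal, consistent with the previous step.

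I expect the main obstacle to be proving the containment in $\overline{H}$ rigorously: one must show that the developed images of all the boundary arcs and the entire filled-in polygon lie on one side of $C$, not just the individual geodesics from type-II vertices. The cleanest route is to foliate $S$ by the geodesic curves issuing from the type-II vertices together with the foliated digon structure near the type-I vertices, observe that each leaf is a geodesic of length smaller than $\pi$ starting on $\partial S$ and ending on $A_1A_2$, and note that a geodesic of length less than $\pi$ with one endpoint on $C$ cannot cross to the far side of $C$ without first exceeding length $\pi$; thus every leaf, hence all of $S$, develops into $\overline{H}$. Care is needed at the two type-I vertices, where the foliation degenerates into the extremal arcs of the maximal digon, but there the extremal arc on the $A_1A_2$ side lies in $C$ by construction and the opposite extremal arc, carrying a type-II singularity, again reaches $A_1A_2$ without leaving $\overline{H}$. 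Assembling these observations yields that the projection of $S$ is contained in the half-sphere bounded by $C$, completing the proof.
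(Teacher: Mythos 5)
Your treatment of the exceptional cases and of the length of $A_{1}A_{2}$ is essentially sound and close to the paper's. Two remarks there. Your detour through the final clause of Lemma 3.3 to exclude length exactly $\pi$ is both unnecessary and gappy: you invoke that clause ``with $\alpha\geq\frac{1}{2}$'' but never justify that the digon containing $A_{1}A_{2}$ has angle at least $\pi$. No such justification is needed: if $A_{1}A_{2}$ had length exactly $\pi$, Lemma 3.3 places it in a $1$-parameter family of geodesic arcs joining $A_{1}$ to $A_{2}$, and the arcs of this family adjacent to $A_{1}A_{2}$ lie in the interior of $S$; they are geodesic arcs between conical singularities not contained in the boundary, contradicting irreducibility outright --- which is exactly what the paper does. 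Note also that the paper does not import the estimate ``length at least $\pi$'' from the proof of Proposition 3.9 as you do; instead it rules out length strictly smaller than $\pi$ separately, by observing that in that case the maximal digon at $A_{1}$ has a conical singularity on each of its extremal arcs, again contradicting irreducibility.

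The genuine gap is in the containment argument, where you depart from the paper. Your proof rests on the claim that the geodesic curves issuing from the type-II vertices, together with the digon structure at $A_{1}$ and $A_{2}$, sweep out all of $S$, so that every point lies on a ``leaf'' ending on $A_{1}A_{2}$. This covering claim is precisely the crux, and you assert it rather than prove it: the geodesics from a single type-II vertex $P_{i}$ sweep a triangle $T_{i}$, but nothing in your argument shows that the union of these triangles and the two digons is all of $S$; one would need, say, an open-and-closed connectedness argument analyzing how $T_{i}$ and $T_{i+1}$ glue along the limiting geodesics through the vertices, together with a separate analysis along $A_{1}A_{2}$. (A minor additional slip: the leaves inside the digons at $A_{1},A_{2}$ have length exactly $\pi$, not smaller than $\pi$; this is harmless since an arc of length $\pi$ ending on $C$ from the $H^{+}$ side still stays in the closed half-sphere.) The paper avoids the covering question altogether: it shows only that the type-II vertices and the boundary arcs project into the closed half-sphere bounded by $C$, and then argues that if some point of $S$ projected into the open complementary half-sphere $H^{-}$, compactness (no boundary of $S$ lies over $H^{-}$) would make the projection proper and surjective over $H^{-}$, so components of the preimage of $H^{-}$ would be half-spheres embedded in the interior of $S$ --- contradicting either irreducibility or the assumption that $S$ is not a half-sphere. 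To make your version complete, either prove the covering claim or replace the final step by this lifting argument.
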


\begin{proof}
If $S$ is a half-sphere or a triangle with angles smaller than $\pi$, its spherical projection is clearly contained in a half-sphere. Outside these two cases, Proposition 3.9 implies that $S$ contains two consecutive vertices $A_{1},A_{2}$ of type I while the other vertices are of type II. Following Corollary 3.8 and Proposition 3.9, every geodesic curve starting from a vertex of type II is of length strictly smaller than $\pi$ and eventually crosses $A_{1}A_{2}$ (always with the same orientation). Therefore, the spherical projection of each vertex of type II belongs to the same half-sphere $H^{+}$ bounded by the great circle extending the projection of $A_{1}A_{2}$. Projections of boundary arcs also belong to this half-sphere $H^{+}$  (they are of length strictly smaller than $\pi$). Therefore, if there are points of $S$ whose spherical projection belong to the complement half-sphere $H^{-}$, every point of $H^{-}$ is the projection of a point of $S$. Preimages of $H^{-}$ in $S$ are half-spheres embedded in $S$. This would contradict either the fact of $S$ is irreducible or the fact that it is not a half-sphere itself. Therefore, the spherical projection of $S$ is contained in $H^{+}$.
If the length of $A_{1}A_{2}$ is exactly $\pi$, this geodesic arc belongs to a $1$-parameter family of disjoint geodesic arcs between $A_{1}$ and $A_{2}$ (forming a digon). This would contradict the fact that $S$ is irreducible. Finally, if the length of $A_{1}A_{2}$ is strictly smaller than $\pi$, then the maximal digon formed by geodesic curves starting from $A_{1}$ (we should remember that $A_{1}$ is of type I so has a geodesic curve of length at least $\pi$) has a conical singularity on each of its extremal arcs. This would imply existence of a geodesic arc disjoint from the boundary between these two conical singularities.
\end{proof}

\subsection{Half-spherical concave polygons}

\begin{defn}
A spherical polygon $S$ is said to be a \textit{half-spherical concave polygon} if it satisfies the following properties:\newline
(i) $S$ has exactly two consecutive vertices $A_{1},A_{2}$ with angles strictly smaller than $\pi$;\newline
(ii) The length of $A_{1},A_{2}$ is strictly bigger than $\pi$;\newline
(iii) The spherical projection of $S$ is contained in the closed half-sphere $H^{+}$ bounded by the great circle extending the projection of $A_{1}A_{2}$;\newline
(iv) $S$ has an arbitrary number (at least one) of vertices of angles at least $\pi$.
\end{defn}

Half-spherical concave polygons (see Figure 1) are the last family of irreducible polygon we need to prove our decomposition theorem.

\begin{figure}
\includegraphics[scale=0.3]{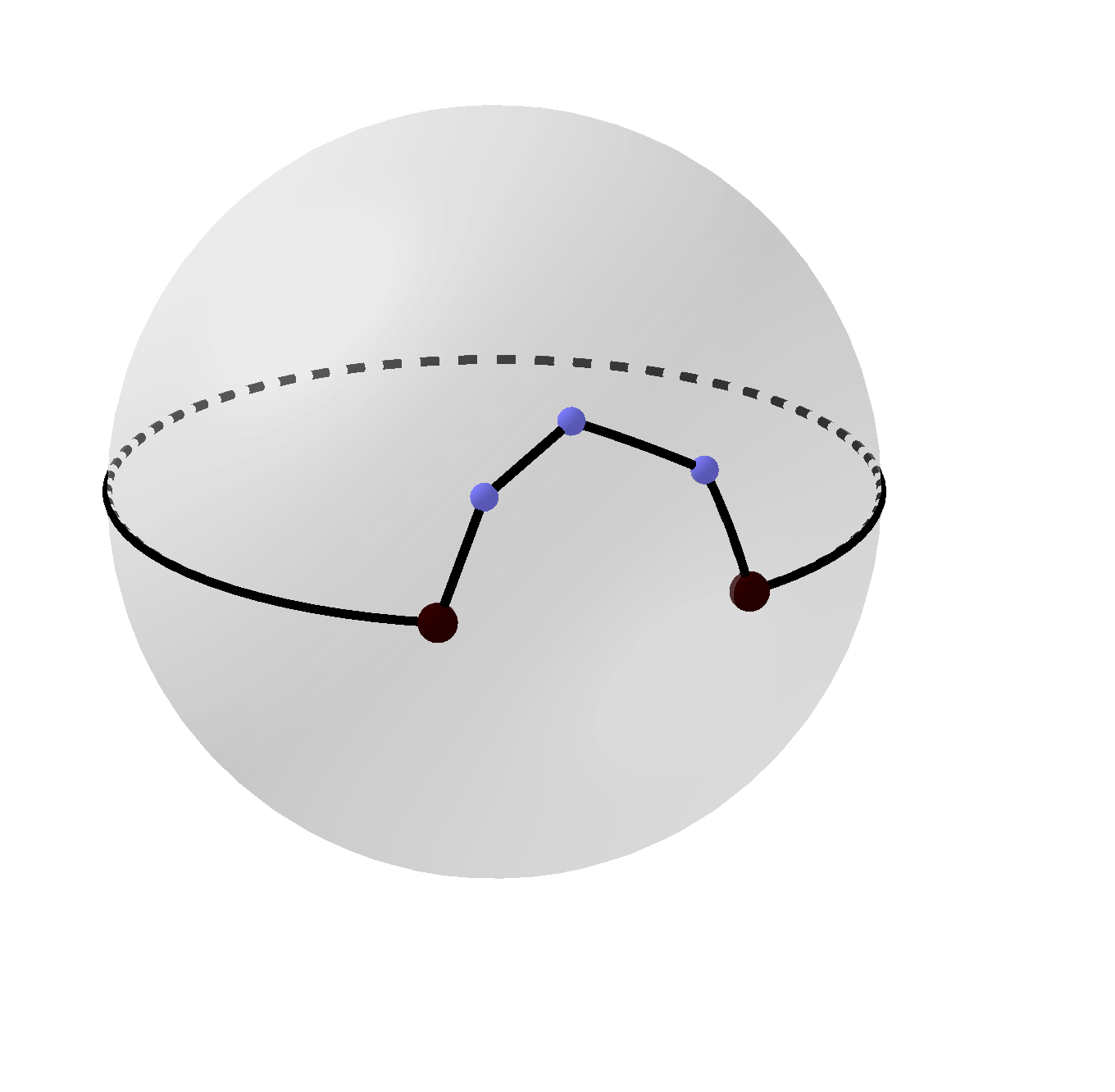}
\caption{The subset of the sphere which is above the polygonal curve is a half-spherical concave polygon.} 
\end{figure}

\begin{prop}
Half-spherical concave polygons are irreducible.
\end{prop}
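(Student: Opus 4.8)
The plan is to work entirely through the spherical projection of Subsection 2.4 and to rule out \emph{diagonals}, i.e.\ geodesic arcs between conical singularities not contained in $\partial S$. Since a spherical polygon has trivial monodromy, its developing map provides a projection of $S$ to the standard sphere which is a local isometry; by property (iii) of Definition 3.11 this projection takes values in the closed half-sphere $H^{+}$, and by properties (i)--(ii) it sends the side $A_1A_2$ onto an arc of length strictly bigger than $\pi$ of the great circle $\partial H^{+}$. Every geodesic arc of $S$ projects onto an arc of some great circle $C\subseteq H^{+}$, and I would split the argument according to whether $C=\partial H^{+}$ or not.

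Suppose first that a putative diagonal $\delta$ projects onto $\partial H^{+}$, so that both of its endpoints project to points of $\partial H^{+}$. A conical singularity of angle strictly bigger than $\pi$ cannot project onto a point of $\partial H^{+}$: at such a point only a half-circle of directions enters the open half-sphere, so an angular sector of angle larger than $\pi$ cannot develop inside the closed $H^{+}$. Hence the only vertices projecting onto $\partial H^{+}$ are $A_1$ and $A_2$ (together with possible marked points of angle $\pi$ on $A_1A_2$, which may be removed exactly as in the proof of Corollary 3.5). Thus $\delta$ would join $A_1$ to $A_2$ along $\partial H^{+}$. But $\partial H^{+}$ carries only two arcs between the projections of $A_1$ and $A_2$: the one of length strictly bigger than $\pi$, which is the side $A_1A_2$ itself, and the complementary one of length strictly smaller than $\pi$. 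The latter leaves $A_1$ in the direction opposite to the side $A_1A_2$; since the sector at $A_1$ has angle strictly smaller than $\pi$ it cannot contain two opposite directions, so this arc immediately exits $S$. No diagonal therefore projects onto $\partial H^{+}$.

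Suppose next that $\delta$ projects onto a great circle $C\neq\partial H^{+}$. Then $C$ meets $\partial H^{+}$ in two antipodal points and $C\cap H^{+}$ is a semicircle of length $\pi$; as $\pi(\delta)\subseteq H^{+}$, the projection of $\delta$ lies in this semicircle and $\delta$ has length at most $\pi$. I would cut $S$ along $\delta$ into two spherical polygons and study the component $S'$ not containing the side $A_1A_2$: its vertices are the two endpoints of $\delta$ together with the vertices of $S$ lying strictly between them along the boundary, all of which have angle at least $\pi$. Using that $\pi(S')$ lies on one side of the semicircle $C\cap H^{+}$, together with the convexity of the closed half-sphere and the fact that $A_1A_2$ projects onto an arc \emph{longer} than a semicircle, I would show that $\delta$ must in fact leave $S$ through one of the concave boundary arcs before reaching its claimed endpoint, so that it cannot be an interior arc terminating at a vertex. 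Equivalently, extending $\delta$ as a geodesic curve inside $S$ reproduces the maximal foliated digon of Lemma 3.3 with a conical singularity on each extremal arc, hence an additional diagonal, and iterating forces the relevant geodesic to cross $A_1A_2$, which contradicts $\delta$ ending off that side. This mirrors the mechanism already used in Lemma 3.6 and Propositions 3.9 and 3.10.

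I expect this second case to be the main obstacle. The difficulty is genuinely spherical: in the flat analogue a region with only two convex vertices may well possess diagonals, so the contradiction cannot be purely combinatorial and must exploit that $A_1A_2$ is longer than a semicircle and that the entire projection fits inside a single closed half-sphere. The delicate point is to show that the great-circle chord underlying $\delta$ exits $S$ through one of the concave (reflex) corners rather than closing up inside $H^{+}$; I anticipate handling it by combining the convexity of $H^{+}$ with the orientation of the concave chain, falling back on the cut-and-extend reduction above whenever the chord has length exactly $\pi$ and Lemma 3.3 applies directly.
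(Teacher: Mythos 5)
Your setup is sound and your first case (a putative diagonal whose projection lies on $\partial H^{+}$) is handled correctly, essentially reproducing the paper's preliminary observation that vertices of angle strictly bigger than $\pi$ project into the interior of $H^{+}$. But the second case --- diagonals projecting onto a great circle $C \neq \partial H^{+}$, which is where all diagonals between concave vertices, and between $A_1$ or $A_2$ and a concave vertex, live --- is not proved: you explicitly defer it (``I would show that\dots'', ``I anticipate handling it by\dots''), and the mechanisms you propose as a fallback do not work. Lemma 3.3 only applies to geodesic arcs of length \emph{exactly} $\pi$, whereas your own length estimate shows the diagonal has length at most $\pi$ and generically strictly less, so no foliated digon is produced; ``extending $\delta$'' past its endpoints is meaningless, since the endpoints are conical singularities where geodesic continuation is not defined (or not unique); and appealing to the mechanism of Lemma 3.6 and Propositions 3.9--3.10 is circular, because those statements take irreducibility as a \emph{hypothesis}, which is precisely what you are trying to establish here.

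The missing idea is Lemma 2.9, which closes this case in a few lines and is how the paper argues. Cut $S$ along the putative diagonal $\delta$ and let $T$ be the component not containing the side $A_1A_2$. By construction $T$ has at most two vertices of angle strictly smaller than $\pi$ (the endpoints of $\delta$; all other vertices of $T$ are concave vertices or marked points of $S$). If both endpoints of $\delta$ are concave vertices, the projection of $T$ lies in an open half-sphere; if one endpoint is $A_1$ (say), the projection of $T$ touches $\partial H^{+}$ only at that single point, so it lies in a slightly tilted open half-sphere. Either way Lemma 2.9 applies and forces $T$ to have at least three vertices of angle strictly smaller than $\pi$ --- a contradiction. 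The remaining configurations are the ones the paper treats separately and that your sketch also leaves aside: a diagonal joining $A_1$ to $A_2$ (it cuts off a digon with both angles strictly smaller than $\pi$, which Proposition 2.11 forces to be a foliated digon, contradicting $|A_1A_2|>\pi$), and a closed geodesic arc based at one vertex (it cuts off a monogon, which by Corollary 2.3 is a half-sphere, forcing an angle equal to $\pi$ at $A_1$ or $A_2$). Without these ingredients your argument rules out only the easy diagonals, so as it stands there is a genuine gap.
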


\begin{proof}
We first note that the spherical projection of the vertices of angles strictly bigger than $\pi$ cannot belong to the boundary of $H^{+}$. This implies that boundary arcs between them also belong to the interior of $H^{+}$. Consequently, it is also true for marked points.\newline
We list each kind of geodesic arc distinct from the boundary and prove why there cannot exist.\newline
If there were a closed geodesic arc attached to one of the vertices (or the marked points) of $S$, it would cut out a monogon which would be a half-sphere (see Corollary 2.3). The projection of this half-sphere would coincide with $H^{+}$. The vertex incident to this closed geodesic arc should be $A_{1}$ or $A_{2}$. This would grant them an angle of $\pi$.\newline
If there were a geodesic arc between $A_{1}$ and $A_{2}$, then this arc would cut out a digon whose angles are strictly smaller than $\pi$. Thus, following Proposition 2.11 it would be a foliated digon and the length of $A_{1},A_{2}$ would be $\pi$.\newline 
If there were a geodesic arc between two vertices different from $A_{1}$ and $A_{2}$, then it would cut out $S$ into two components. The component that does not contain $A_{1}$ and $A_{2}$ is a spherical polygon whose spherical projection is contained in an open half-sphere. Lemma 2.9 implies that it should have at least three vertices with angles strictly smaller than $\pi$. By construction it has at most two such vertices (the ends of the geodesic arc since the other vertices have angle at least $\pi$).\newline
Finally, if there were a geodesic arc between $A_{1}$ (without loss of generality) and a vertex (or marked point) $B$ different from $A_{2}$, then it would cut out $S$ into two components. Let $T$ be the component that does not contain $A_{2}$. The spherical projection $Y$ of $T$ is a polygon of the closed half-sphere $H^{+}$. Since it has exactly one vertex on the great circle, $Y$ is contained in a slightly different half-sphere. Therefore, Lemma 2.9 can be used and implies that $T$ has at least three vertices with angles strictly smaller than $\pi$. By construction $T$ has at most two such vertices.\newline
Every geodesic arc of a half-spherical polygon is a boundary arc.
\end{proof}

Finally, we are able to state a complete characterization of irreducible spherical polygons.

\begin{thm}
Irreducible spherical polygons are the elements of the three following families:\newline
(i) Spherical triangles with angles strictly smaller than $\pi$;\newline
(ii) Half-spheres bounded by a great circle with marked points (with no pair of antipodal marked points in the boundary);\newline
(iii) Half-spherical concave polygons.
\end{thm}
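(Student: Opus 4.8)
The plan is to prove Theorem 3.13 by showing that the three listed families exhaust all irreducible spherical polygons, using the classification machinery developed in Section 3. First I would dispose of the degenerate low-complexity cases. By Corollary 3.5, any irreducible spherical polygon that is not a half-sphere has at least three vertices of angle different from $\pi$; in particular there are no irreducible digons other than foliated ones (which are not irreducible). For polygons with few vertices or with all angles small, I would invoke Proposition 3.7: if there is a vertex of angle smaller than $\pi$ from which every geodesic curve has length strictly smaller than $\pi$, then $S$ is a spherical triangle with all angles strictly smaller than $\pi$, landing us in family (i). Half-spheres with marked points and no antipodal pair are irreducible by Proposition 3.2, giving family (ii).

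The heart of the argument is the remaining case: an irreducible spherical polygon $S$ that is neither a half-sphere nor a spherical triangle with all angles smaller than $\pi$. The goal is to show such an $S$ must be a half-spherical concave polygon in the sense of Definition 3.11. Here I would assemble the structural results already proved. Corollary 3.8 gives the type~I / type~II dichotomy on vertices. Proposition 3.9 then forces exactly two consecutive vertices $A_{1},A_{2}$ of type~I, with all remaining $n-2$ vertices of type~II (angle at least $\pi$), and identifies $A_{1}A_{2}$ as the common opposite side. This immediately verifies properties (i) and (iv) of Definition 3.11, since type~I means angle strictly smaller than $\pi$ and type~II means angle at least $\pi$. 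Finally, Proposition 3.10 supplies the two remaining properties at once: the spherical projection of $S$ is contained in the closed half-sphere $H^{+}$ whose bounding great circle extends the projection of $A_{1}A_{2}$ (property (iii)), and the length of $A_{1}A_{2}$ is strictly larger than $\pi$ (property (ii)).

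Conversely, I would note that the inclusions are equalities: each listed family does consist of irreducible polygons. Family (i) is irreducible because a triangle with all angles below $\pi$ projects into an open half-sphere and, by Lemma 2.9, admits no interior diagonal without acquiring a third small-angle vertex elsewhere, which a triangle cannot accommodate; family (ii) is Proposition 3.2; and family (iii) is exactly Proposition 3.12. Thus the three families are precisely the irreducible polygons.

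I expect the main obstacle to be purely expository rather than mathematical, since the substantive work has been front-loaded into Propositions 3.4, 3.7, 3.9, 3.10 and 3.12. The delicate point to articulate carefully is that the case analysis is genuinely exhaustive: one must confirm that \emph{every} irreducible polygon falls under the hypotheses of Proposition 3.7 (yielding a triangle) or else has its vertices governed by the type~I/type~II dichotomy of Corollary 3.8 (yielding, via Propositions 3.9 and 3.10, a half-spherical concave polygon), with half-spheres isolated as the one family excluded from both Propositions 3.4 and 3.7. The only subtlety requiring a word of justification is why a polygon with no type~I vertex cannot occur once half-spheres and small triangles are removed; this is precisely the content established in the first paragraph of the proof of Proposition 3.9, so I would cite it rather than reprove it.
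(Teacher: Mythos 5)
Your proposal is correct and takes essentially the same route as the paper's own proof: exhaustiveness is obtained exactly as in the paper by feeding Propositions 3.9 and 3.10 into the four conditions of Definition 3.11, and irreducibility of families (ii) and (iii) is quoted from Propositions 3.2 and 3.12. The only (harmless) divergence is your argument for irreducibility of triangles with angles below $\pi$: you apply Lemma 2.9 to the piece cut out by a putative diagonal, whereas the paper notes that such a diagonal would cut out a digon with angles smaller than $\pi$, which Proposition 2.11 forces to be foliated, making a side of the triangle have length $\pi$ --- both arguments are valid.
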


\begin{proof}
Proposition 3.2 and 3.12 proves that polygons of families (ii) and (iii) are indeed irreducible. If a triangle with angles strictly smaller than $\pi$ contained a geodesic arc disjoint from the boundary, then it would cut out a digon whose angles would be strictly smaller than $\pi$. Proposition 2.11 would imply that it is a foliated digon and the length of one side of the triangle would be $\pi$.\newline
Then, Propositions 3.9 and 3.10 prove that an irreducible spherical polygon that is not a triangle nor a half-sphere satisfies the four conditions of Definition 3.11.
\end{proof}

The main theorem of the paper follows from Theorem 3.13 and some cuttings.

\begin{proof}[Proof of Theorem 1.3]
We consider a spherical surface $S$. If its singular locus is not connected, Proposition 2.7 implies existence of a geodesic arc connecting two distinct components of $Sing(S)$. The complement of this arc in $S$ is a new spherical surface whose singular locus has one connected component less. Iterating this process, we get a spherical surface $S'$ with a connected singular locus. If $S'$ has nonzero genus, Proposition 2.8 implies existence of a geodesic arc in $S'$ such that its complement has lower genus (and a singular locus which is still connected). Iterating this process, we get a spherical surface $S''$ of genus zero with a connected singular locus.\newline
If $Sing(S'')$ is just a conical singularity, then Corollary 2.2 implies that $S''$ is the standard sphere with a marked point. Cutting along a great circle passing through the marked point decomposes $S''$ into two half-spheres with a marked point on their boundary.\newline
Otherwise $S''$ is a spherical polygon (surface of genus zero with a connected boundary and without interior conical singularities). If $S''$ has only one side, it is a half-sphere with one marked point on its boundary (see Corollary 2.3). If $S''$ contains a geodesic arc whose ends are the same conical singularity, its complement has two connected components. One is a half-sphere with a marked point on its boundary (see Corollary 2.3). The other is a spherical polygon with one side more. However, its area has decreased by $2\pi$. Therefore, after finitely many such steps (the area of $S''$ gives a bound on the number of these steps), we get a spherical polygon $S'''$ with at least two sides and without any closed geodesic arc at any of its vertices.\newline
If there is a geodesic arc in $S'''$ between two nonconsecutive vertices, its complement has two connected components. These components have a strictly smaller number of sides than $S'''$. After finitely many steps, $S'''$ is decomposed into finitely many spherical polygons for which the only geodesic arcs between two conical singularities are between two consecutive vertices.\newline
Let $T$ by any such spherical polygon. If there is a geodesic arc between two consecutive vertices (and which is distinct from the boundary arc between them), then it cuts out a digon. Following Proposition 2.11, either the total angle in this digon is at least $2\pi$ or it is a digon foliated by a $1$-parameter family of disjoint geodesic arcs of length $\pi$. In the latter case, this family can be continued in $T$ until it hits another boundary arc (in this case, $T$ is a digon) or some conical singularities. Removing the geodesic arcs forming the boundary of this maximal digon cuts out $T$ into digons and some spherical polygons with a lower number of sides. Foliated digons can be arbitrarily cut into smaller foliated digons. We thus assume that foliated digons in our decomposition have two vertices with angles different from $2\pi$. If such a digon has marked points on both sides (between the two special vertices), we can cut it in order to get digons with marked points on only one side. Since this process always reduces the number of sides of spherical polygons, we finally get spherical polygons such that the length of any geodesic arc between conical singularities is not $\pi$. For such polygons, if there is a geodesic arc between two consecutive vertices, then it cuts a digon of total angle at least $2\pi$. Thus, after cutting finitely many such digons, we get a finite number of spherical polygons without any geodesic arc between the conical singularities and that would be distinct from the boundary.\newline
Irreducible spherical polygons have already been classified in Theorem 3.13.
\end{proof}

There is a natural question about combinatorial moves between geometric decompositions. Is it true that any pair of geometric decompositions of the same surface can be joined by a chain of moves that change one edge at a time. In other words, is the flip graph connected ?

\section{Core of a spherical surface}

\subsection{Motivation}

In the theory of translation surfaces, the geometric structure outside singularites is defined by local charts to $\mathbb{C}$ that are anti-derivatives of a holomorphic or meromorphic $1$-form (zeroes of the differentials are the conical singularities). The distinction between these two classes of differentials is crucial since a translation surface has finite area if the corresponding differential is holomorphic while any neighborhood of a pole has infinite area.\newline

This distinction seems not to have a counterpart for spherical surfaces (they always have finite area). The following examples might indicate that the situation is more complicated.

\begin{ex}
Let $A \subset \mathbb{C}$ be a parallelogram of the complex plane. If the sides of $A$ are identified, we get a torus with a translation structure of finite area (corresponding to the constant $1$-form on some elliptic curve). On the opposite, the complex plane $\mathbb{C} \setminus A$ with parallelogram $A$ removed and identified sides is a topological torus with a translation structure of infinite area. It corresponds to a meromorphic $1$-form with a pole of order $2$ and a zero of order $2$ (conical singularity of angle $6\pi$).
\end{ex}

The two constructions have very explicit counterparts in the real of tori with constant positive curvature and one conical singularity.

\begin{ex}
Let $A \subset S$ be a spherical quadrilateral (included in an open half-sphere) with two pairs of opposite sides of the same length. If the sides of $A$ are identified, we get a torus of constant positive curvature with a conical singularity. On the opposite, the complement of $A$ in the sphere with opposite sides identified is also a torus of constant positive curvature with one conical singularity.
\end{ex}

The two spherical surfaces constructed here are geometrically very different and it should be reflected by some invariant.\newline

For flat surfaces, the adequate notion is the core (see Section 4 in \cite{Ta1}) which is defined as the convex hull of the conical singularities for the underlying metric structure. The core is a connected compact subset of finite area of the surface, cut out by geodesic segments. Besides, its complement is a disjoint union of topological disks (one for each pole if the translation structure is induced by a meromorphic $1$-form).\newline

Such a definition would be trivial for spherical surfaces. We could proceed differently by defining the complement of the core.

\subsection{The core and the soul}

In this section, we assume spherical surfaces are closed (i.e. without boundary).

\begin{defn}
We denote by $H$ an open half-sphere of the standard sphere. Let $E(S)$ be the set of isometric embeddings of $H$ into a spherical surface $S$. For $U=\bigcup\limits_{\phi \in E(S)} \phi(H)$. The \textit{core} $\mathcal{C}(S)$ is the complement of $U$ in $S$.
\end{defn}

In Example 4.2, the core of the first spherical surface is the full surface. The core of the second one is just the union of the two geodesic arcs.\newline

Since the area of half-spheres is $2\pi$, $U$ is formed by finitely many connected components we call \textit{exterior domains}. In order to get a clear description of them, we introduce the \textit{soul} of an exterior domain.

\begin{defn}
We denote by $N \in H$ the uppermost point of $H$ (thought as the North Pole if $H$ is the northern hemisphere). Let $E(D)$ be the set of isometric embeddings of $H$ into an exterior domain $D$ of a spherical surface $S$. The soul of exterior domain $D$ is $Soul(D)=\bigcup\limits_{\phi \in E(D)} \phi(\{N\})$.
\end{defn}

Singular spheres $S_{\alpha}$ (see Subsection 2.5) are very specific. Their core is just the union of the two conical singularities. The complement of the core is connected. It is the unique exterior domain. It is not contractible and its soul is a closed geodesic (the equator of the singular sphere).\newline
Outside this exceptional case, we will prove that the core and the exterior domains satisfy the properties we expect: the core should be connected and exterior domains should be contractible.

\begin{lem}
For any exterior domain $D$ in a spherical surface $S$ which is not a singular sphere $S_{\alpha}$, $Soul(D)$ is either:\newline
(i) an isolated point if $D$ is a half-sphere bounded by a least three marked points and intervals of length strictly smaller than $\pi$ between them;\newline
(ii) a geodesic segment of length $\alpha$ if $D$ is foliated digon of angle $\alpha+\pi$ with at least one marked point on each of its sides;\newline
(iii) a spherical polygon whose angles are strictly smaller than $\pi$ otherwise.\newline
In particular, $Soul(D)$ is always connected.
\end{lem}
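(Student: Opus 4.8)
The plan is to identify $Soul(D)$ with the set of centres of the embedded half-spheres, to develop this set into the standard sphere $S^2$, and then to read off the three cases from the dimension of the resulting spherically convex region. First I would reformulate the soul. Every self-isometry of the open half-sphere $H$ fixes its centre $N$, so an embedding $\phi\in E(D)$ is determined up to such an isometry by its image $\phi(H)$, and $\phi(N)$ is exactly the centre of that image. Hence
$$Soul(D)=\{\,p\in D:\ B(p,\pi/2)\ \text{embeds isometrically as an open half-sphere contained in}\ D\,\},$$
the set of centres of embedded half-spheres; equivalently $p\in Soul(D)$ precisely when $B(p,\pi/2)$ contains no conical singularity and the exponential map at $p$ is injective up to radius $\pi/2$. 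The family of embedded half-spheres then varies continuously with its centre, and centres that are close produce overlapping half-spheres.

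Next I would pass to the developing map. Since $D$ is not a singular sphere, I would first show that $D$ carries a single-valued developing map $\mathrm{dev}\colon D\to S^2$ (the only exterior domain with nontrivial monodromy and a closed-geodesic soul being the excluded $S_\alpha$, whose soul is its equator), and that $\mathrm{dev}$ is injective except in the foliated-digon case. An embedded half-sphere maps under $\mathrm{dev}$ to a genuine half-sphere of $S^2$ — an isometric immersion of $H$ into $S^2$ extends to a rotation — and its centre to the corresponding centre. The obstruction to fitting a half-sphere $B(c,\pi/2)$ inside the developed region is the frontier $\partial D$, which lies in the core and is a union of geodesic arcs meeting at the conical singularities bounding $D$; the fitting condition is that every such arc $a$ stay at distance at least $\pi/2$ from $c$, and each of these conditions cuts out a closed half-sphere (its bounding great circle being the $\pi/2$-equator of the perpendicular foot, or of an endpoint singularity). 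Thus, in the injective case,
$$\mathrm{dev}\big(Soul(D)\big)=\bigcap_{a\subseteq\partial D}\{\,c:\ \dist(c,a)\ge \pi/2\,\}.$$
This is a finite intersection of closed half-spheres, hence a convex set with geodesic sides, i.e. a spherical polygon whose sides are indexed by the conical singularities of $\partial D$ and (as for the convex hulls of Lemma 2.9) all of whose interior angles are strictly smaller than $\pi$. Being convex it is connected.

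I would then distinguish the three cases according to the dimension of this convex body. If it reduces to a single point $p$, I would argue that $D$ is the single half-sphere $B(p,\pi/2)$: two embedded open half-spheres of equal area $2\pi$ with one contained in the other must coincide, so $D$ is itself a half-sphere and $Soul(D)=\{p\}$, which is case (i) (the marked-point conditions following from maximality of $\partial D$). If the body is two-dimensional, it is a genuine convex spherical polygon with all angles strictly smaller than $\pi$, which is case (iii). The intermediate one-dimensional case I would identify with the foliated digon: the embedded half-spheres are then centred along a geodesic and sweep out a $1$-parameter family of length-$\pi$ meridians sharing two endpoints; if the digon has total angle $\alpha+\pi$, a centre admits the required angular window of width $\pi$ exactly along a sub-arc of length $\alpha$ of the midpoint curve, so $Soul(D)$ is an embedded geodesic segment of length $\alpha$, which is case (ii). In all three cases the soul is a point, a segment, or a convex polygon, hence connected.

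The hard part will be precisely this one-dimensional case, where the developing map genuinely fails to be injective: the meridians sweep an angle $\alpha+\pi$ that may exceed $2\pi$, so the developed soul can be an arc of length greater than $2\pi$ and is no longer convex in $S^2$, which invalidates the intersection-of-half-spheres description. There one must instead argue directly inside the abstract foliated digon that the valid centres form an embedded segment of length $\alpha$. Dually, I must rule out a \emph{wrapping} two-dimensional soul: a two-parameter family of half-sphere centres winding around a great circle would force an embedded $S_\alpha$ inside $D$, contradicting that $D$ is an exterior domain distinct from a singular sphere. Establishing that the only non-injective behaviour of $\mathrm{dev}$ on the soul is the foliated digon is where the real work lies; once that dichotomy is in place, the three cases are exactly the zero-, one-, and two-dimensional convex souls, each manifestly connected.
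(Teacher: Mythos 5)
Your reduction of $Soul(D)$ to the set of centres of embedded half-spheres is the same starting point as the paper's, but the route you take from there has a genuine gap, and it is exactly the one you flag at the end: nothing in your argument establishes that $D$ has trivial holonomy and that its developing map is injective outside the foliated-digon case. At the stage where this lemma sits, $D$ is merely a connected component of the union of all embedded half-spheres; a priori it need not be simply connected, and the intersection formula $\mathrm{dev}\bigl(Soul(D)\bigr)=\bigcap_{a}\{c:\mathrm{dist}(c,a)\ge\pi/2\}$ is meaningless without a well-defined, injective $\mathrm{dev}$. Worse, your description of $\partial D$ as a finite union of geodesic arcs meeting at the conical singularities is itself not yet available: in the paper, that boundary structure is deduced in Theorem 4.6 \emph{from} the present lemma (via the duality between sides and angles of $Soul(D)$ and of $\partial D$), so invoking it here is circular. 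Your treatment of the ``wrapping'' case is also not a proof: a closed surface $S_\alpha$ cannot be embedded \emph{inside} $D$, so the contradiction you point to does not exist as stated; what must actually be excluded is an annular soul, and that requires an argument.

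The paper avoids all of this by staying local and never developing $D$ globally. For each soul point $\phi(N)$ it classifies the local geometry of $Soul(D)$ according to the boundary circle of the half-sphere $\phi(H)$: no singularity on the circle gives a two-parameter deformation, so $\phi(N)$ is an interior point; all boundary arcs of length $<\pi$ gives an isolated point (case (i)); some arc of length exactly $\pi$ makes $\phi(H)$ a foliated $D_{1/2}$ whose family of meridians extends until it hits singularities, so $D$ is a foliated digon and the soul is a segment of length $\alpha$ (case (ii)); a longest arc of length $\beta\in(\pi,2\pi]$ gives a corner of angle $\beta-\pi\le\pi$. Compactness then shows $Soul(D)$ is cut out by finitely many geodesic arcs with angles at most $\pi$, and Gauss--Bonnet pins down its Euler characteristic: $\chi=2$ would force $S$ to have no conical singularity at all, $\chi=0$ would be a geodesic annulus, which spherical Gauss--Bonnet forbids, hence $\chi=1$ and the soul is a spherical polygon (case (iii)); connectedness comes from the separate observation that two overlapping embedded half-spheres produce a geodesic segment of centres joining their centres. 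If you want to salvage your global approach, you must first prove the holonomy-triviality/injectivity dichotomy you defer, and that is essentially the content of Theorem 4.6 itself --- i.e.\ harder than the lemma you are trying to prove.
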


\begin{proof}
Any isometric embedding $\phi$ of $H$ into $D$ is completely characterized by $\phi(\{N\}) \in Soul(D)$. The soul of an exterior domain $D$ is always connected. Indeed, if two embedded half-spheres are not disjoint, their intersection is formed by a pencil of great circles that provide a geodesic segment inside $Soul(D)$ between the images of their uppermost points. The local geometry of $Soul(D)$ at each of this point depends on the boundary of the embedded half-sphere centered on this point.\newline
If $\phi(H)$ is a half-sphere bounded by a closed geodesic passing only through regular points, then there is a $2$-parameters local deformation (bending and turning) of the embedding. There is a open neighborhood of $\phi(H)$ that is completely covered by isometrically embedded half-spheres. Consequently, there exists an open neighborhood of $\phi(\{N\})$ that is completely contained in $Soul(D)$. $\phi(\{N\})$ is an interior point of $V$.\newline
If $\phi(H)$ is a half-sphere bounded by a closed geodesic with singularities so that the longest arc between two consecutive singularities is strictly smaller than $\pi$, then $\phi(\{N\})$ is isolated in $Soul(D)$ (there is no nontrivial deformation).\newline
If $\phi(H)$ is a half-sphere bounded by a closed geodesic with singularities such that there is at least on arc between two consecutive singularities whose angle is exactly $\pi$, then the half-sphere is in fact foliated digon $D_{1/2}$ whose $1$-parameter of geodesic arcs can be continued until it hits a conical singularity (the family cannot be circular since we excluded the case of foliated singular spheres). In this case, $D$ is a foliated digon of angle $\alpha+\pi$ with $\alpha >0$. Its soul is an interval of length $\alpha$ in the geodesic curve formed by the midpoints of the geodesic arcs of the digon.\newline
If $\phi(H)$ is a half-sphere bounded by a closed geodesic with singularities such that the longest arc between two consecutive singularities is of length $\beta$ with $\pi < \beta \leq 2\pi$, then the local geometry of $Soul(D)$ around $\phi(\{N\})$ is that of a corner of angle $\beta-\pi$ between two geodesic arcs. In particular, if there is only one singularity in the boundary of $\phi(H)$, then the longest arc is of length $2\pi$ and the local geometry of $Soul(D)$ around $\phi(\{N\})$ is that of a corner of angle $\pi$ (a half-sphere).\newline
In the following, we assume $D$ is not a half-sphere or a foliated digon. Therefore, the local geometry of $Soul(D)$ around each of its points is either that of an interior point, a boundary point or a vertex point of a spherical polygon. Since $S$ is compact, $Soul(D)$ is clearly a domain of $D$ cut out by finitely many geodesic arcs. Its angles are at most $\pi$. Gauss-Bonnet formula then implies that Euler characteristic of $Soul(D)$ is either $0$, $1$ or $2$. In the latter case $V$ is a topological sphere without boundary so there is no conical singularity at all in $S$. If Euler characteristic of $Soul(D)$ is zero, then $Soul(S)$ is an annulus bounded by two closed geodesics. Gauss-Bonnet formula implies such an annulus does not exist in spherical geometry. Otherwise, the Euler characteristic of $Soul(S)$ is one and it is a spherical polygon.
\end{proof}

There is a complete correspondance between the shape of the soul and the shape of an exterior domain.

\begin{thm}
In any spherical surface $S$ which is not a singular sphere $S_{\alpha}$, every exterior domain is either a foliated digon of angle at least $\pi$ or the complement in the sphere of a (possibly degenerate) convex polygon contained in a closed half-sphere.\newline
Besides, the core $\mathcal{C}(S)$ is a connected subset of $S$ that contains every conical singularity of $S$ and whose boundary is formed by geodesic arcs between the singularities. The length of its sides is strictly smaller than $\pi$.
\end{thm}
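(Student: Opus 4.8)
The plan is to study each exterior domain $D$ through its soul, exploiting the classification of $Soul(D)$ already obtained in Lemma 4.5, and then to read off the properties of the core as the complement of the exterior domains. Since $S$ is assumed not to be a singular sphere $S_{\alpha}$, Lemma 4.5 tells us that $Soul(D)$ is a point, a geodesic segment, or a spherical polygon with angles strictly smaller than $\pi$; in every case it is connected and contractible. The segment case is precisely the foliated digon of angle at least $\pi$, which is the first alternative of the statement, so the substance of the argument concerns the point and polygon cases. In those cases $D$ is swept out by the embedded half-spheres whose uppermost points fill $Soul(D)$, and retracting each such half-sphere along its meridians onto its centre produces a deformation retraction of $D$ onto the contractible soul. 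Hence $D$ is simply connected, its monodromy is trivial, and its developing map $\mathrm{dev}\colon D \to S^{2}$ onto the standard sphere is defined up to a rotation.

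Next I would describe the developed image. Each embedded half-sphere $\phi_{p}(H)$ with uppermost point $p \in Soul(D)$ develops isometrically onto the open half-sphere $H_{p}\subset S^{2}$ of radius $\tfrac{\pi}{2}$ centred at $\mathrm{dev}(p)$, so that $\mathrm{dev}(D)=\bigcup_{p}H_{p}$ and
$$K=S^{2}\setminus\bigcup_{p}H_{p}=\bigcap_{p}\overline{B\left(-\mathrm{dev}(p),\tfrac{\pi}{2}\right)},$$
an intersection of closed half-spheres, hence a (possibly degenerate) convex spherical polygon. Fixing one point $p_{0}\in Soul(D)$ shows that every $x\in K$ lies within $\tfrac{\pi}{2}$ of $-\mathrm{dev}(p_{0})$, so $K$ is contained in the closed half-sphere centred at $-\mathrm{dev}(p_{0})$, which gives the required containment. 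It then remains to promote the developing map to an isometry $D\cong S^{2}\setminus K$: since $D$ is simply connected, $\mathrm{dev}$ is a local isometry onto the simply connected disc $S^{2}\setminus K$, and I would verify injectivity from the fact that $D$ is covered by the injectively embedded half-spheres $\phi_{p}(H)$ with connected overlaps assembled over the contractible soul. I expect this injectivity (equivalently, that $\mathrm{dev}$ is a covering of $S^{2}\setminus K$) to be the main obstacle, since it is exactly the step where the local picture furnished by Lemma 4.5 must be globalised.

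With the classification of exterior domains established, the statements about the core follow. Every open embedded half-sphere is smooth, so it contains no conical singularity; hence each singularity lies in $\mathcal{C}(S)=S\setminus U$, and the core contains all of them. For connectedness, each exterior domain is now known to be an open topological disc (a half-sphere, the complement of a convex polygon, or a foliated digon) and these discs are pairwise disjoint; removing a disjoint family of open discs from the connected closed surface $S$ leaves a connected set, so $\mathcal{C}(S)$ is connected. Finally $\partial\mathcal{C}(S)=\bigcup_{i}\partial D_{i}$ develops, domain by domain, onto the boundary $\partial K$ of the corresponding convex polygon, a finite union of geodesic arcs meeting at the vertices of $K$; these vertices are the images of the conical singularities that block the extremal half-spheres (a smooth boundary point cannot be a corner, since a half-sphere could still be slid across it), so the sides of the core are geodesic arcs between conical singularities. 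Each such side lies in a closed half-sphere and therefore has length at most $\pi$; equality is excluded because a length-$\pi$ arc between two consecutive singularities would, by Lemma 3.3, generate a $1$-parameter family of length-$\pi$ geodesics, i.e. an embedded half-sphere reaching those points and contradicting their membership in $\mathcal{C}(S)$, while in the foliated-digon case the two extremal arcs of length $\pi$ each carry a subdividing marked point and so split into proper sub-arcs shorter than $\pi$. Thus every side of the core has length strictly smaller than $\pi$, which completes the plan.
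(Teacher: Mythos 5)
Your overall skeleton (classify each exterior domain through its soul via Lemma 4.5, treat the segment case as the foliated digon, and describe the remaining cases as complements of convex polygons) parallels the paper, and the polar-dual formula $K=\bigcap_{p}\overline{B(-\mathrm{dev}(p),\tfrac{\pi}{2})}$ is a clean way to phrase the target. But the proof has a genuine gap exactly where you flag it: you never prove that the developing map is injective on $D$, i.e.\ that $D$ is isometric to $S^{2}\setminus K$ rather than a multi-sheeted spread over it. This is not a routine verification that can be deferred: $D$ is an incomplete spherical surface, and a local isometry from a simply connected incomplete surface onto its image need not be a covering map, so simple connectedness of $D$ plus connectedness of the overlaps does not yield injectivity. (Even your preliminary step, the deformation retraction of $D$ onto $Soul(D)$, is only asserted; a point of $D$ lies in many embedded half-spheres with different centres, so the retraction must be constructed, not just described.) This injectivity is precisely the content that the paper's proof supplies by a different mechanism: it analyses the \emph{boundary} of $D$, showing every boundary point lies on the boundary circle of an extremal embedded half-sphere, that these circles are blocked by conical singularities, and that $\partial D$ is a closed polygonal chain in duality with $\partial Soul(D)$ (a vertex of angle $\beta<\pi$ in the soul is dual to a boundary side of length $\pi-\beta$ in $\partial D$, a side of length $L$ in the soul is dual to a vertex of angle $\pi+L$ in $\partial D$). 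Combined with the fact that the soul is convex of diameter strictly less than $\pi$ (proved by excluding embedded foliated digons inside the soul), this duality forces $D$ to be the complement of the classical polar dual of $Soul(D)$, which is what rules out the multi-sheeted scenario. Without that step your classification of exterior domains is unproved, and the core statements you derive from it (connectedness via disjoint discs, boundary structure) inherit the gap.

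A secondary error: to bound the sides of the core you argue that each side ``lies in a closed half-sphere and therefore has length at most $\pi$.'' That inference is false --- a closed half-sphere contains geodesic arcs of any length up to $2\pi$ (its boundary great circle), and degenerate convex sets such as lunes have sides of length exactly $\pi$. In the paper the bound is immediate from the duality: each side of $\partial D$ has length $\pi-\beta$ where $\beta\in(0,\pi)$ is the corresponding angle of the soul, so it is automatically strictly smaller than $\pi$; no separate appeal to Lemma 3.3 is needed. Your Lemma 3.3 argument for excluding length exactly $\pi$ could probably be repaired, but as written it does not address sides of length strictly between $\pi$ and $2\pi$, which only the duality (or an equivalent boundary analysis) eliminates.
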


\begin{proof}
For any exterior domain $D$ of $S$, the cases where $Soul(D)$ is an isolated point or a geodesic segment are already settled in Lemma 4.5. In the following, we assume $Soul(D)$ is a (nondegenerate) spherical polygon.\newline
Let $x$ be an element of the boundary of $D$. As such it belongs to the boundary of an embedded sphere $\phi(H) \subset D$. Clearly, $\phi(\{N\})$ should also belong to the boundary of $Soul(D)$. If $\phi(\{N\})$ is a vertex of angle $\beta<\pi$ in $Soul(D)$, the boundary of $\phi(H)$ is a great circle with conical singularities such that the length of the longest arc between two singularities is $\pi+\beta$. Element $x$ of the boundary of $D$ should belong to the complement arc of length $\pi-\beta$ (otherwise we could deform the embedding $\phi$ to have $x$ as an interior point of $D$). Elements of this complement arc of length $\pi-\beta$ clearly belong to the boundary of $D$. They form a chain of geodesic arcs between conical singularities.\newline
If $\phi(\{N\})$ belongs to the boundary of $Soul(D)$ but is not a vertex, then the boundary of $\phi(H)$ is a great circle with only one conical singularity (of angle at least $\pi$ inside $D$). We can find a 1-parameter family of half-spheres embeddings turning around this singularity until the boundary of one of these half-spheres hits another singularity. Therefore, both the boundary of $D$ and that of $Soul(D)$ are closed polygonal curves. Vertices of the boundary of $D$ are conical singularities and its sides are geodesic arcs. There is a duality between these two polygonal curves: An angle $\beta$ in $Soul(D)$ is dual to a side of length $\pi-\beta$ in the boundary of $D$. A side of length $L$ in $Soul(D)$ corresponds to a singularity with an angle of $\pi+L$ in $D$.\newline
Since elements of $D$ are always regular points of $S$, every conical singularity belongs to the core $\mathcal{C}(S)$. Exterior domains are spherical polygons (topological disks) so they are separated from each other by the core. As such, if we make one puncture in each exterior domain, the punctured surface retracts on the core which is thus connected.\newline
It remains to prove that $D$ is the complement in the sphere of a (possibly degenerate) polygon contained in a closed half-sphere.\newline
Following Lemma 4.5, $Soul(D)$ is a nondegenerate spherical polygon whose vertices have angles strictly smaller than $\pi$. If there is a geodesic arc $\gamma$ of length $\pi$ in $Soul(D)$ between two (possibly regular) points $x$ and $y$, then $\gamma$ belong to an embedded maximal foliated digon. Since $Soul(D)$ is not itself a digon, then the two extremal arcs of the digon meet the boundary of $Soul(D)$. However, two geodesics that are tangent to each other are in fact the same. Besides, interior angles of $Soul(D)$ are strictly smaller than $\pi$ so they cannot belong to an extremal arc. Therefore, existence of such an embedded foliated digon implies a contradiction. The diameter of $Soul(D)$ is thus strictly smaller than $\pi$. Together with the fact that interior angles are strictly smaller than $\pi$, this implies that $Soul(D)$ is a convex spherical polygon contained in a half-sphere.\newline
Correspondance between angles and sides of $Soul(D)$ and $D$ then implies that $D$ can be embedded into a sphere in such a way the complement of $D$ in the sphere is the classical dual polygon of $Soul(D)$. Since $D$ contains a half-sphere (by hypothesis), it is the complement of a spherical polygon contained in a half-sphere. It is then easy to show that the length of its sides is strictly smaller than $\pi$ and the angles are at least $\pi$ and strictly smaller than $2\pi$.
\end{proof}

In particular, the class of complements in the sphere of a possibly degenerate polygon contained in a closed half-sphere contains:\newline
(i) Half-spheres bounded by at least three marked points and intervals of length strictly smaller than $\pi$ between them;\newline
(ii) Spheres with a slit of length strictly smaller than $\pi$.\newline
It is clear that the latter family is the only possible shape for exterior domains with exactly two boundary arcs (and there is no exterior domain with only one boundary arc).

\begin{cor}
If $S$ is a spherical surface distinct from a foliated singular sphere $S_{\alpha}$, each exterior domain in $S$ is a spherical polygon bounded by at least two arcs.
\end{cor}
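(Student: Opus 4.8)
The plan is to read the statement off the classification of exterior domains established in Theorem 4.6, spending the real effort only on bounding the number of boundary arcs. First I would apply Theorem 4.6: since $S$ is not a foliated singular sphere $S_{\alpha}$, every exterior domain $D$ is either a foliated digon of angle at least $\pi$ or the complement in the sphere of a (possibly degenerate) convex polygon contained in a closed half-sphere, and in either case $D$ is a topological disk cut out of $S$ by geodesic arcs between conical singularities. Every point of $D$ is regular, since $D$ is covered by isometrically embedded half-spheres, so $D$ has genus zero, one boundary component and no interior conical singularity; that is, $D$ is a spherical polygon in the sense of Subsection 2.4. This settles the qualitative half of the statement, and it remains to show that the boundary consists of at least two arcs.

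For the arc count I would treat the two cases of Theorem 4.6 separately, invoking the duality between $D$ and $Soul(D)$ recorded in its proof: each vertex of $Soul(D)$ is dual to a boundary arc of $D$, so the number of boundary arcs of $D$ equals the number of vertices of $Soul(D)$. A foliated digon has two sides meeting at its two vertices, hence at least two boundary arcs, and there is nothing more to do. When $D$ is the complement of a convex polygon, I would distinguish according to $Soul(D)$. If $Soul(D)$ is a non-degenerate spherical polygon (Lemma 4.5 (iii)), then by the argument of Theorem 4.6 it is convex of diameter strictly smaller than $\pi$, so it can be neither a monogon (which Corollary 2.3 forces to be a half-sphere, of diameter $\pi$) nor a digon (whose sides have length $\pi$); it therefore has at least three vertices, and $D$ has at least three boundary arcs. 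If $Soul(D)$ degenerates to a point (Lemma 4.5 (i)), then $D$ is a half-sphere bounded by at least three marked points, again yielding at least three arcs.

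The only configuration that could defeat the bound is a monogon exterior domain, and excluding it is the one step that needs genuine care. I would argue by area. A monogon is, by Corollary 2.3, a half-sphere bounded by a great circle with a single marked point, so it has area exactly $2\pi$. Any open half-sphere embedded in $D$ also has area $2\pi$ and is contained in $D$; since $D$ itself has area $2\pi$, each such embedded half-sphere must coincide with the interior of $D$, so $Soul(D)$ reduces to a single point. But Lemma 4.5 (i) shows that an isolated soul point forces its bounding great circle to carry at least three marked points, contradicting the single marked point of the monogon. Hence no exterior domain is a monogon, and together with the three cases above this shows that every exterior domain of $S$ is a spherical polygon bounded by at least two arcs. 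The main obstacle is precisely this exclusion of the monogon, where the soul machinery of Lemma 4.5 must be balanced against the monogon classification of Corollary 2.3; the rest is a direct transcription of Theorem 4.6.
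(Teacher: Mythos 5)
Your proposal is correct, but it excludes the monogon by a genuinely different mechanism than the paper. The paper's own proof, after the same reduction you make (Theorem 4.6 gives topological disks, so a one-arc domain would be a monogon, and Corollary 2.3 identifies it with a half-sphere carrying a single marked point $A$ on its bounding great circle), is constructive: it rotates that great circle slightly about the axis through $A$ to produce a second half-sphere embedded in $S$; this half-sphere meets $D$, lies in the union $U$ of embedded half-spheres, hence lies in the component $D$, so $D$ is strictly larger than a half-sphere --- contradiction. You argue in the opposite direction: rigidity shows a monogon contains no embedded half-sphere other than its own interior, so $Soul(D)$ would be a single point, which contradicts the trichotomy of Lemma 4.5 (single-point souls occur only for half-spheres with at least three marked points). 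Note that your converse reading of Lemma 4.5 is legitimate --- the paper reads it the same way in the proof of Theorem 4.6, where the point and segment cases are declared ``already settled'' --- but it means the hard geometric content is not avoided, only relocated: the proof of Lemma 4.5 in the relevant sub-case (exactly one singularity on the boundary of the embedded half-sphere) consists of precisely the rotation the paper's proof of this corollary makes explicit. So both arguments ultimately rest on the same deformation fact; yours black-boxes it into a cited lemma, and in exchange your soul-duality bookkeeping gives slightly more, namely at least three arcs outside the digon case, consistent with Corollary 4.8. One step to tighten: the claim that an embedded open half-sphere $\phi(H)\subseteq D$ with $\mathrm{area}(\phi(H))=\mathrm{area}(D)=2\pi$ must equal the interior of $D$ does not follow from area alone, since containment plus equal finite area only forces $D\setminus\phi(H)$ to be a relatively closed set of measure zero (a priori a slit, say). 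The clean justification is rigidity: the developing map of the simply connected set $\phi(H)$ shows it is an open half-sphere of the model sphere nested inside the open half-sphere $D$, and two nested open half-spheres of the sphere coincide, since otherwise their boundary great circles would cross.
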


\begin{proof}
Since exterior domains are topological disks (Theorem 4.6), if they are bounded by only one arc, they are monogons. We consider such an exterior domain. Following Corollary 2.3, a monogon is a half-sphere with only one marked point $A$ on a great circle $\gamma$. If we rotate slightly the great circle $\gamma$ around the axis passing through $A$ in the half-sphere and get, in a neighborhood of $\gamma$, another great circle bounding a half-sphere. The two half-spheres have nontrivial intersection so the exterior domain is bigger than a half-sphere.
\end{proof}

\begin{cor}
In any closed spherical surface, an exterior domain bounded by two arcs is a sphere with a slit of length $L<\pi$. Its total angle and its area are both equal to $4\pi$.
\end{cor}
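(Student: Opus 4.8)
The plan is to run the two-arc exterior domain $D$ through the structure theorem (Theorem 4.6) together with its corollaries, discard every normal form that cannot have exactly two boundary arcs, and then read off the two invariants from the surviving model. I would begin by recalling that $D$ cannot come from a foliated singular sphere (whose soul is a closed geodesic, not a polygon) and that, by Corollary 4.7, every exterior domain already has at least two boundary arcs, so $D$ realizes the minimal case. By Theorem 4.6, $D$ is then either a foliated digon of angle at least $\pi$ or the complement in the sphere of a (possibly degenerate) convex polygon contained in a closed half-sphere. The foliated-digon case is eliminated immediately: by Lemma 4.5~(ii) a foliated digon occurring as an exterior domain carries at least one marked point on each of its two sides, so each side splits into two boundary arcs and $D$ would have at least four arcs.

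Hence $D$ is the complement of a convex polygon $P$ sitting in a closed half-sphere, and the number of boundary arcs of $D$ equals the number of sides of $P$. A genuine (nondegenerate) convex spherical polygon has at least three sides and so would contribute at least three arcs; the only two-sided alternative is a lune, whose complement is again a spherical digon and hence a foliated digon (Proposition 2.11), which we have already excluded. Therefore, to realize exactly two arcs, the polygon $P$ must be degenerate, collapsing to a single geodesic segment, and its complement is a sphere slit along this segment. By the list recorded immediately after Theorem 4.6, such a slit has length $L<\pi$, so $D$ is a sphere with a slit of length $L<\pi$, its two boundary arcs being the two sides of the slit and its two boundary vertices being the endpoints of the slit.

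It remains to compute the total angle and the area. Each endpoint of the slit is a regular point of the ambient sphere, around which the total angle is $2\pi$; cutting along the slit, which terminates there, opens the full disk around the point along one radius and turns it into a boundary corner whose interior angle is the entire $2\pi$. Thus $D$ has exactly two corners, each of angle $2\pi$, giving total angle $2\pi+2\pi=4\pi$. For the area I would note that excising a one-dimensional slit does not alter the area, so $\mathrm{Area}(D)=4\pi$; this is confirmed by the Gauss-Bonnet formula of Section 2 with $g=0$, $n=0$, $p=1$ and two boundary singularities of angle $2\pi$ (i.e.\ $\beta_1=\beta_2=1$), which gives $\mathrm{Area}(D)/(2\pi)=2-1+(\tfrac12+\tfrac12)=2$.

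The main obstacle I anticipate is the careful handling of the degenerate case in the second paragraph: one must match the two boundary arcs of $D$ with the combinatorics of $P$ and see that the only admissible two-sided option is the collapsed segment, ruling out the lune both as $P$ and as $D$ through the foliated-digon exclusion. The angle count is the other delicate point, since one must verify that a slit endpoint yields a corner of angle $2\pi$ rather than $\pi$. Once these two points are settled, the identification of $D$ and the values of the invariants follow directly from the structure theorem and Gauss-Bonnet.
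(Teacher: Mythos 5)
Your proposal is correct and follows essentially the same route as the paper: exclude the foliated-digon case via Lemma 4.5 (which forces at least four boundary arcs), invoke Theorem 4.6 to realize $D$ as the complement of a two-sided polygon in the sphere, identify that polygon as a degenerate slit, and read off the angles $2\pi+2\pi$ and the area $4\pi$. Your treatment is in fact slightly more careful than the paper's, since you explicitly rule out the nondegenerate two-sided polygon (the lune, whose complement is a foliated digon) — a case the paper passes over silently — and you confirm the area with Gauss--Bonnet.
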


\begin{proof}
If an exterior domain $D$ is a digon, then it cannot be a foliated digon because in this case, Lemma 4.5 implies that it has at least four boundary arcs (there is at least one conical singularity on each extremal arc of the digon otherwise the continuous family of geodesic arcs can be continued). $Soul(D)$ is also a digon. Theorem 4.6 implies that $D$ is the complement in the sphere of a polygon with exactly two boundary arcs. The only possibility is that $D$ is the sphere with a slit. The two vertices have an angle of $2\pi$ and the area of $D$ is that of the sphere.
\end{proof}

\subsection{Geometric decompositions of the core and exterior domains}

Theorem 1.3 proves that any spherical surface $S$ can be decomposed into pieces of simple shape. However, there could be a lot of such decompositions. It could be reasonable to focus on geometric decompositions in which the core is a union of pieces of the decomposition. 

\begin{cor}
For a (closed) spherical surface $S$ (which is not a foliated singular sphere), there is a geometric decomposition of the interior of $Core(S)$ into spherical triangles with angles strictly smaller than $\pi$.
\end{cor}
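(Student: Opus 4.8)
The plan is to run the cutting procedure underlying Theorem 1.3 directly on the core and to show that the only pieces that can survive inside it are spherical triangles with angles strictly smaller than $\pi$. First I would record that, by Theorem 4.6, $Core(S)$ is a compact connected spherical surface whose geodesic boundary consists of arcs joining conical singularities of $S$, each of length strictly smaller than $\pi$, and which contains every conical singularity of $S$. In particular $Core(S)$ is itself a spherical surface in the sense of Definition 1.1, so Theorem 1.3 applies and decomposes it into pieces of the four listed types: triangles with angles $<\pi$, foliated digons, half-spheres, and half-spherical concave polygons; restricting this decomposition to $\mathrm{int}(Core(S))$ will give the statement once the last three families are eliminated.

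The key point is to discard those three families using the defining property of the core, namely that $\mathrm{int}(Core(S))=S\setminus U$ is disjoint from $U$, the union of all isometrically embedded open half-spheres. A half-sphere piece is literally such a half-sphere, and a half-spherical concave polygon contains one as well: by Definition 3.11 its developing image lies in the closed half-sphere $H^+$ bounded by the great circle carrying the long side $A_1A_2$ (of length $>\pi$), and, as in the analysis of Proposition 3.10 together with Corollary 2.4, the developing map is injective over the open hemisphere, so the preimage of $\mathrm{int}(H^+)$ is an embedded open half-sphere. Since each such piece sits in $S$ with interior contained in $\mathrm{int}(Core(S))$ while simultaneously containing a subset lying in $U$, we reach a contradiction; hence no half-sphere and no half-spherical concave polygon occurs.

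For a foliated digon $D_\alpha$ occurring inside the core I would argue according to its angle. If $\alpha\ge\frac{1}{2}$, then by Lemma 3.3 its image contains an embedded half-sphere, again impossible in $Core(S)$; hence $\alpha<\frac{1}{2}$. In that case Lemma 3.3(iii) furnishes a geodesic arc $d$ joining the two boundary conical singularities of $D_\alpha$ inside the digon, and the proof of that lemma places these singularities in a common open hemisphere, so $d$ has length $<\pi$. I would then cut along $d$: since its endpoints are conical singularities of $S$, hence lie in the core, the cut stays inside $Core(S)$ and splits $D_\alpha$ into two spherical polygons all of whose sides (subarcs of the length-$\pi$ leaves together with $d$) now have length $<\pi$. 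Re-running the procedure of Theorem 1.3 on these polygons, once more discarding half-spheres and concave polygons by disjointness from $U$ and subdividing any thin foliated digon along its diagonal, and iterating, the construction terminates by the same finiteness as in Theorem 1.3 (a decomposition of a fixed surface uses only finitely many disjoint arcs). Every surviving piece is then a spherical triangle with angles $<\pi$.

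The step I expect to be the main obstacle is the treatment of foliated digons of angle less than $\pi$. Unlike half-spheres and concave polygons, these contain no embedded half-sphere and can genuinely lie inside the core, so they cannot be excluded and must instead be subdivided along the diagonal of Lemma 3.3(iii). The delicate points are checking that this diagonal has length $<\pi$ and remains in the core, and, more seriously, verifying that re-decomposing the two resulting polygons never regenerates a foliated digon of angle at least $\frac{1}{2}$ — this is exactly what guarantees that the iteration terminates in honest triangles rather than in further digons.
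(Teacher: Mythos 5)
Your exclusion of half-spherical concave polygons rests on a false claim, and this is where the proposal breaks down. You assert that the developing map of a half-spherical concave polygon is injective over the open hemisphere $\mathrm{int}(H^{+})$, so that the piece contains an embedded open half-sphere and therefore cannot meet the core. But the argument of Proposition 3.10 that you invoke does not transfer: there, preimages of the \emph{complementary} open hemisphere $H^{-}$ are embedded half-spheres precisely because no boundary point of the polygon projects into $H^{-}$, which makes the projection a covering map over the simply connected $H^{-}$. Over $\mathrm{int}(H^{+})$ this fails, since the concave chain of vertices of angle at least $\pi$ projects into the interior of $H^{+}$. Concretely, the polygon of Figure 1 is a proper subset of the closed hemisphere $H^{+}$; the only open half-sphere contained in a closed hemisphere is its interior, which is not contained in the polygon, so this piece contains no embedded open half-sphere at all. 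Hence half-spherical concave polygons can genuinely occur inside the core and cannot be discarded by disjointness from $U$. This is exactly the point where the paper does real work: within the core, concave polygons are the only pieces having a boundary arc of length strictly greater than $\pi$, so they occur in pairs glued along their long sides $A_{1}A_{2}$; the paper erases the common edge of each pair and re-cuts the merged polygon $P$ by an iterative argument at $A_{1}$ (a sub-arc of length $\pi$ of the erased edge spans a maximal embedded digon; an angle at least $\pi$ there would force an embedded half-sphere inside the core, a contradiction; an angle less than $\pi$ yields a diagonal cutting off a triangle with angles less than $\pi$, reducing the vertex count). None of this is recoverable from your half-sphere exclusion.

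Your treatment of foliated digons also has a gap, which you partly flag yourself. A foliated digon piece of the Theorem 1.3 decomposition carries marked points on at most one of its boundary arcs, possibly none; in the latter case its only singularities are its two vertices, every interior geodesic arc joins these two vertices, and cutting along such arcs produces only further digons, never triangles. So ``the two boundary conical singularities'' needed for your appeal to Lemma 3.3(iii) need not exist on the piece at all. The correct move (sketched in the paper) is to erase the piece's boundary arcs, extend its foliation inside $S$ to a maximal foliated digon whose extremal arcs hit conical singularities of $S$ (the family cannot be cyclic since $S$ is not a singular sphere, and the maximal angle must be less than $\pi$ because the piece lies in the core), and only then cut along the diagonal of Lemma 3.3(iii); this modifies the decomposition outside the original piece. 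Finally, the termination of the resulting iteration --- which you yourself identify as the delicate step --- is asserted but not proven in your proposal, whereas the paper's induction terminates because each step strictly decreases the number of vertices of the polygon being processed.
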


\begin{proof}
Most of propositions follow from Theorems 1.3 and 4.6. The interior of $core(S)$ is a subsurface of $S$ cut out by geodesic arcs between conical singularities (it is a spherical surface with boundary). By definition, there is no half-sphere in the core and any spherical triangle whose vertices (conical singularities) belong to the same half-sphere in the spherical projection cannot belong to embedding of a half-sphere. If there is a foliated digon inside an exterior domain, then its family of geodesic arcs can be continued til the boundary of the exterior domain. In this case, the exterior domain is itself a foliated digon.\newline
Foliated digons inside the core can also be continued until extremal arcs hit some conical singularity (the family of geodesic arc cannot be cyclic because in this case the whole surface would be a foliated singular sphere).\newline
Half-spherical concave polygons are the only polygons of the decomposition that have boundary arcs of length strictly bigger than $\pi$. Thus, they form pairs glued along their longest side. For each pair, we remove the edge $A_{1}A_{2}$ that separates them an get a spherical polygon $P$. We take a part of edge $A_{1}A_{2}$ of length $\pi$ starting from $A_{1}$. This geodesic arc of length $\pi$ belongs to a maximal embedded digon whose extremal arcs contain strictly the incident boundary arcs of $A_{1}$ (extremal arcs cannot hit other conical singularities because it would imply existence of a diagonal which is forbidden by Proposition 3.12). If the angle obtained at a vertex $A_{1}$ is at least $\pi$, then there is an embedded half-sphere in the digon (which is a clear contradiction since there polygons belong to the core). If the angle is strictly smaller than $\pi$, then the two neighbors of $A_{1}$ can be joined by a geodesic arc disjoint from the boundary. This arc cuts out a spherical triangle with angles strictly smaller than $\pi$. The complement of this triangle in $P$ is a polygon with a vertex less than $P$. Theorem 1.3 can be applied to this new polygon and iterating the process, we finally get a complete decomposition into triangles.
\end{proof}

The number of triangles in such decompositions of the core is controlled by the number of exterior domains of the number of their sides. This provides also a topological bound on the total angle of the core.

\begin{cor}
For a (closed) spherical surface $S$ (which is not a foliated singular sphere) of genus $g$ with $n$ conical singularities, $p$ domains of poles with $b_{1},\dots,b_{p} \geq 2$ boundary arcs, the interior of $\mathcal{C}(S)$ decomposes into $4g-4+2n+2p-\sum_{i=1}^{p} b_{i}$ spherical triangles.\newline
In particular, the total angle of the core is strictly smaller than $(12g-12+6n)\pi$. Its area is strictly smaller than $(8g-8+4n)\pi$.
\end{cor}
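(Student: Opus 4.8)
The plan is to run an Euler--characteristic count on the cell decomposition of $S$ furnished by Corollary 4.9 and the classification of exterior domains, and then to read the two inequalities off the angle and the area of a single spherical triangle.

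First I would assemble a genuine cell decomposition of the closed surface $S$. By Theorem 4.6 every conical singularity lies in $\mathcal{C}(S)$ and the boundary arcs of the core are geodesic arcs joining singularities, while by Corollary 4.9 the interior of the core is cut into $T$ spherical triangles, all of whose angles are strictly smaller than $\pi$ and whose vertices are the singularities (no Steiner points are introduced). Taking the $n$ conical singularities as vertices, the geodesic arcs --- both the interior diagonals of the triangulation and the boundary arcs shared with the exterior domains --- as edges, and the $T$ triangles together with the $p$ exterior domains (each a topological disk by Theorem 4.6, hence a legitimate $2$-cell) as faces, I obtain a cell decomposition of $S$ with $V=n$ and $F=T+p$.

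Next I would solve for $T$. Counting incidences, each triangle contributes three boundary arcs and each exterior domain contributes $b_i$, while every edge borders exactly two faces, so $2E=3T+\sum_{i=1}^{p}b_i$. Substituting $E=\tfrac12\bigl(3T+\sum_i b_i\bigr)$ together with $V=n$ and $F=T+p$ into Euler's relation $V-E+F=2-2g$ and clearing denominators gives, after simplification, $T=4g-4+2n+2p-\sum_{i=1}^{p}b_i$, the asserted count.

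Finally I would extract the two bounds. The total angle of the core equals the sum over the $T$ triangles of their angle sums; since each angle is strictly smaller than $\pi$ (Corollary 4.9), every angle sum is strictly smaller than $3\pi$, so the total angle is strictly smaller than $3\pi T$. As each $b_i\geq 2$ (Corollary 4.7), we have $\sum_i b_i\geq 2p$, whence $T\leq 4g-4+2n$ and $3\pi T\leq(12g-12+6n)\pi$, giving the angle bound. For the area, spherical Gauss--Bonnet gives each triangle an area equal to its angle sum minus $\pi$, hence strictly smaller than $2\pi$; summing yields an area strictly smaller than $2\pi T\leq(8g-8+4n)\pi$. The computation is short once the decomposition is fixed, so the main obstacle is purely bookkeeping: verifying that the triangles and exterior domains really form a valid cell decomposition to which the identity $2E=3T+\sum_i b_i$ applies. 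The one delicate case is an exterior domain that is a sphere with a slit (Corollary 4.8), where the two boundary arcs are the two sides of a single geodesic; counting boundary arcs with their natural multiplicity keeps Euler's relation and the incidence identity simultaneously valid, so the formula for $T$ --- and hence both inequalities --- is unaffected.
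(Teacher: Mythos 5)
Your proposal is correct and follows essentially the same route as the paper: the paper likewise forms the embedded graph with $n$ vertices, $\frac{1}{2}\bigl(3t+\sum_{i=1}^{p}b_{i}\bigr)$ edges and $t+p$ faces, solves Euler's relation for $t$, and then combines the per-triangle bounds (angle sum $<3\pi$, area $<2\pi$) with $2p-\sum_{i}b_{i}\leq 0$ from Corollary 4.7. Your extra care about the sphere-with-a-slit case (counting the slit's two sides with multiplicity) is a point the paper leaves implicit, but it does not change the argument.
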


\begin{proof}
Corollary 4.9 implies existence of a geometric decomposition of the core into $t$ triangles. We get an embedded graph into $S$ with $n$ vertices, $p+t$ contractible faces and $\frac{1}{2}(3t+\sum_{i=1}^{p} b_{i})$ edges. Therefore, Euler characteristic provides the value of $t$. Every spherical triangle of the core has a total angle strictly smaller than $3\pi$ (and an area strictly smaller than $2\pi$). Besides, Corollary 4.7 implies that $2p-\sum_{i=1}^{p} b_{i} \leq 0$.
\end{proof}

Bounds on the area provide an estimation of the number of exterior domains.

\begin{cor}
For a (closed) spherical surface $S$ (distinct from a singular sphere) of genus $g$ with $n$ conical singularities of angles $2\pi\alpha_{1},\dots,2\pi\alpha_{n}$. If there $S$ has $p$ domains of poles with $b_{1},\dots,b_{p}$ boundary arcs none of which being a foliated digon, then we have
$p \geq 3-3g-\frac{3n}{2}+\frac{1}{2}\sum_{i=1}^{n} \alpha_{i}$.\newline
Even if we allow foliated digons as exterior domains, we also have the following upper bound $p \leq 2-2g-n+\sum_{i=1}^{n} \alpha_{i}$.
\end{cor}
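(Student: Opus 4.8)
The plan is to run an area-accounting argument, comparing the total area of $S$ furnished by Gauss-Bonnet against the areas contributed by the core and by the $p$ exterior domains. Throughout I will use the closed Gauss-Bonnet formula from Section 2, which gives $\mathrm{Area}(S) = 2\pi\bigl(2 - 2g - n + \sum_{i=1}^n \alpha_i\bigr)$, together with the partition $\mathrm{Area}(S) = \mathrm{Area}(\mathcal{C}(S)) + \sum_{i=1}^p \mathrm{Area}(D_i)$ into the core and the exterior domains $D_1,\dots,D_p$. This partition is legitimate up to the measure-zero boundary arcs: by Definition 4.3 the set $U$ is open, its connected components are precisely the $D_i$, and $\mathcal{C}(S)$ is its complement.

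For the upper bound the key observation is that every exterior domain has area at least $2\pi$. Indeed each $D_i$ contains an isometrically embedded open half-sphere by construction, so $\mathrm{Area}(D_i) \geq 2\pi$; and for a foliated-digon domain (angle at least $\pi$ by Lemma 4.5(ii)) one checks directly, by doubling it to a singular sphere and applying Gauss-Bonnet, that its area equals twice its angle and is therefore again at least $2\pi$. Summing over the $p$ domains and discarding the nonnegative core contribution yields $2\pi p \leq \mathrm{Area}(S)$, that is $p \leq 2 - 2g - n + \sum \alpha_i$. This step is valid whether or not foliated digons occur, which is why the upper bound needs no extra hypothesis.

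For the lower bound I instead need \emph{upper} bounds on the per-domain areas, combined with the upper bound on the core area from Corollary 4.10, namely $\mathrm{Area}(\mathcal{C}(S)) < (8g - 8 + 4n)\pi$. Assuming no exterior domain is a foliated digon, Theorem 4.6 presents each $D_i$ as the complement in the standard sphere of a convex polygon contained in a closed half-sphere, so $\mathrm{Area}(D_i) = 4\pi - \mathrm{Area}(\text{polygon}) \leq 4\pi$. Summing gives $\sum_i \mathrm{Area}(D_i) \leq 4\pi p$, hence $4\pi p \geq \mathrm{Area}(S) - \mathrm{Area}(\mathcal{C}(S)) > \mathrm{Area}(S) - (8g-8+4n)\pi$. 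Substituting the Gauss-Bonnet value of $\mathrm{Area}(S)$ and dividing by $4\pi$ produces $p > 3 - 3g - \tfrac{3n}{2} + \tfrac12\sum \alpha_i$, which is even stronger than the claimed inequality.

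The main obstacle is not any single estimate but ensuring the per-domain area bounds are applied to the correct class of domains: the bound $\mathrm{Area}(D_i) \leq 4\pi$ is exactly what breaks for foliated digons (whose area $2(\alpha+\pi)$ grows without bound as the soul length $\alpha$ increases), which both explains and forces the hypothesis excluding them from the lower bound, whereas the universal bound $\mathrm{Area}(D_i) \geq 2\pi$ is what renders the upper bound unconditional. A secondary point to verify is that the strict inequality of Corollary 4.10 propagates correctly through the chain, and that the degenerate slit domains of Corollary 4.8, which attain area exactly $4\pi$, remain consistent with the $\leq 4\pi$ estimate.
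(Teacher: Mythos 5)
Your proposal is correct and follows essentially the same route as the paper: Gauss--Bonnet for the total area, the core bound from Corollary 4.10, the $\leq 4\pi$ bound from Theorem 4.6 for non-digon exterior domains (giving the lower bound on $p$), and the $\geq 2\pi$ bound from the embedded half-sphere (giving the upper bound). The only cosmetic difference is that the paper carries the term $\frac{1}{2}\sum_{j}(b_{j}-2)$ through the estimate and discards it at the end via $b_{j}\geq 2$, whereas you discard it at the outset by using the simplified core bound $(8g-8+4n)\pi$; the arguments are otherwise identical.
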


\begin{proof}
Gauss-Bonnet formula states that $Area(S)=(4-4g-2n+2\sum_{i=1}^{n} \alpha_{i})\pi$. Corollary 4.10 proves that the total area of the core is at most $(8g-8+4n-2\sum_{j=1}^{p} (b_{j}-2))\pi$. Consequently, the total area of exterior domains is at least:
$$(2\sum_{i=1}^{n} \alpha_{i}+12-12g-6n+2\sum_{j=1}^{p} (b_{j}-2))\pi.$$
Theorem 4.6 proves that the area of any exterior domain which is not a foliated digon is at most $4\pi$. Therefore we have $p \geq 3-3g-\frac{3n}{2}+\frac{1}{2}\sum_{i=1}^{n} \alpha_{i}+\frac{1}{2}\sum_{j=1}^{p} (b_{j}-2)$. Since every exterior domain has at least two boundary arcs, we get the lower bound.\newline
Every exterior domain contains a half-sphere so its area is at least $2\pi$. This induces the upper bound on the number of exterior domains.
\end{proof}

As a consequence of these estimates, we can deduce that for a given $n$ and $g$, every spherical surface with a large enough total conical angle, at least one exterior domain is just a sphere with a slit (a pearl). Therefore, spherical surfaces with a total conical angle exceeding the bound are easily reduced to spherical surfaces that satisfy the bound.

\begin{thm}
For a (closed) spherical surface $S$ (distinct from a singular sphere) of genus $g$ with $n$ conical singularities of angles $2\pi\alpha_{1},\dots,2\pi\alpha_{n}$. We assume no exterior domain is a foliated digon. If the total conical angle $2\pi\sum_{i=1}^{n} \alpha_{i}$ is at least $(10g-10+5n)2\pi$, then at least one exterior domain is a sphere with a slit.
\end{thm}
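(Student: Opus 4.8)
The plan is to prove the contrapositive: assuming that \emph{no} exterior domain is a sphere with a slit, I will show $\sum_{i=1}^{n}\alpha_{i}<10g-10+5n$. Since foliated digons are excluded by hypothesis, Corollary 4.7 gives $b_{i}\geq 2$ for every exterior domain $D_{i}$, and Corollary 4.8 identifies the domains with $b_{i}=2$ as exactly the spheres with a slit. Hence my assumption says precisely that $b_{i}\geq 3$ for all $i$, and the whole argument will be a race between the total area forced by Gauss--Bonnet and the area the core and the exterior domains can actually hold.

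The crucial step is a bound on the number $p$ of exterior domains. Rewriting the triangle count of Corollary 4.10 via $\sum_{i=1}^{p}b_{i}=2p+\sum_{i=1}^{p}(b_{i}-2)$ gives
\[
t=4g-4+2n-\sum_{i=1}^{p}(b_{i}-2),
\]
and since $t\geq 0$ this forces the topological ceiling $\sum_{i}(b_{i}-2)\leq 4g-4+2n$. When every $b_{i}\geq 3$ each summand is at least $1$, so $p\leq\sum_{i}(b_{i}-2)\leq 4g-4+2n$. This is the bound that breaks the circularity one meets when naively combining the two estimates of Corollary 4.11, both of which grow linearly with the total angle and therefore cancel.

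With $p$ controlled I will close the argument by an area count. By Theorem 4.6 each $D_{i}$ is the complement in the sphere of a convex spherical polygon, which is nondegenerate exactly because $b_{i}\geq 3$; hence $Area(D_{i})<4\pi$, the value $4\pi$ being reached only for the slit. Decomposing $Area(S)$ into the core and the exterior domains, bounding the core by $2\pi t$ (each of the $t$ triangles has area $<2\pi$ by Corollary 4.9) and each $Area(D_{i})$ by $4\pi$, and inserting $Area(S)=(4-4g-2n+2\sum_{i}\alpha_{i})\pi$ from Gauss--Bonnet, I obtain $2-2g-n+\sum_{i}\alpha_{i}<t+2p$. Substituting the expression for $t$ and using $\sum_{i}(b_{i}-2)\geq p$ reduces this to $\sum_{i}\alpha_{i}<6g-6+3n+p$, and finally the ceiling $p\leq 4g-4+2n$ yields $\sum_{i}\alpha_{i}<10g-10+5n$, contradicting the hypothesis.

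I expect the only real obstacle to be locating the bound $p\leq 4g-4+2n$: the two inequalities of Corollary 4.11 are individually too weak, and the identity $t+\sum_{i}(b_{i}-2)=4g-4+2n$ together with $t\geq 0$ is what turns ``every domain has at least three sides'' into a genuine ceiling on $p$. The remaining points are routine: one checks that $p\geq 1$ (the hypothesis makes $Area(S)$ exceed the topological cap $4\pi(2g-2+n)$ available to the core, so the exterior is nonempty) and that the inequality $Area(D_{i})<4\pi$ is strict for $b_{i}\geq 3$, both of which follow directly from Theorem 4.6 and Corollary 4.8.
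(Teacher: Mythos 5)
Your proof is correct and takes essentially the same route as the paper: the contrapositive via the area count, with each core triangle bounded by $2\pi$, each exterior domain (necessarily with $b_{i}\geq 3$) bounded by $4\pi$, the triangle count of Corollary 4.10 combined with $t\geq 0$ to cap $p$ by $4g-4+2n$, and Gauss--Bonnet to conclude. Your explicit chain of inequalities in fact makes rigorous the step the paper states only informally (``the maximal area is obtained with a degenerate core and exterior domains with exactly three boundary arcs''), but the underlying argument is identical.
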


\begin{proof}
We assume $S$ has $p$ domains of poles with $b_{1},\dots,b_{p} \geq 3$ boundary arcs. Spherical surface $S$ decompose into $p$ exterior domains of area strictly smaller than $4\pi$ and a core formed by $4g-4+2n+2p-\sum_{i=1}^{p} b_{i}$ spherical triangles of area strictly smaller than $2\pi$.\newline
The maximal area is obtained with a degenerate core and exterior domains with exactly three boundary arcs. In this case, $p=4g-4+2n$ and the total area is strictly smaller than $(16g-16+8n)\pi$. Gauss-Bonnet formula states that the total area of the surface is $(4-4g-2n+2\sum_{i=1}^{n} \alpha_{i})\pi$. Therefore, we have $\sum_{i=1}^{n} \alpha_{i} < 10g-10+5n$.
\end{proof}

The bound $(10g-10+5n)2\pi=2\pi(-5\chi(\dot{S}))$ (where $\dot{S}$ is the surface punctured at the singularities) can be proven to be optimal. Indeed, following a construction suggested by the anonymous referee, we consider a spherical surface $S$ formed by the gluing of $2m$ triangles of angles $2\pi(\frac{5}{6}-\epsilon)$ (these triangles are complements to tiny equilateral triangles in the standard sphere). These surface clearly does not contain pearls and foliated digons. At the same time, the sum of conical angles in $S$ is $6m.2\pi(\frac{5}{6}-\epsilon)$ that can be made arbitrarily close to $5m.2\pi=2\pi.(-5\chi(\dot{S}))$.\newline

Our intuition here is that exterior domains should play the same role in spherical surfaces as poles in translation surfaces corresponding to meromorphic differentials. Some spherical surfaces seems to be the counterparts of holomorphic $1$-forms (those for which the core is the full surface). While others are the counterparts of meromorphic $1$-forms (those that have embedded half-spheres).

\subsection{Core and discriminant}

We define a notion of discriminant in the moduli spaces of spherical surfaces in such a way that outside the discriminant, the topological pair formed by the core in the surface is structurally stable.\newline

\begin{defn}
For any $g,n \geq 0$, let $\mathcal{S}_{g,n}$ be the moduli space of spherical surfaces of genus $g$ with $n$ conical singularities (up to isometry), with a topology induced by Gromov-Hausdorff metric. We say that a spherical surface $S$ belongs to the discriminant $\Delta \subset$ if there is an interior angle equal to $\pi$ in an exterior domain $D$ of $X$.
\end{defn}

In spherical surfaces that do not belong to the discriminant, possible shapes of exterior domains are more constrained. In particular, half-spheres and foliated digons are impossible. Therefore, the bound of Theorem 4.12 is satisfied by spherical surfaces that do not belong to the discriminant.

\begin{thm}
Provided $2g+n \geq 3$, for any spherical surface $S$ in $\mathcal{S}_{g,n} \setminus \Delta$, any exterior domain is the complement of a polygon contained in an open half-sphere. Besides, there is an open neighborhood $V$ of $S$ in $\mathcal{S}_{g,n} \setminus \Delta$ such that for any spherical surface $T \in V$, the topological pair $(T,\mathcal{C}(T))$ is homeomorphic to $(S,\mathcal{C}(S))$.
\end{thm}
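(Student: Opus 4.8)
The plan is to treat the two assertions separately: the first follows from the classification in Theorem 4.6 together with a spherical polarity argument, and the second from the fact that being outside the discriminant is governed entirely by strict inequalities, so that the homeomorphism type of $(T,\mathcal{C}(T))$ is locally constant on $\mathcal{S}_{g,n}\setminus\Delta$.

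For the first assertion I would begin from Theorem 4.6, which presents each exterior domain $D$ as either a foliated digon of angle at least $\pi$ or the complement in the sphere of a convex polygon $P$ contained in a closed half-sphere, $P$ being the classical dual of $Soul(D)$. The task is to discard the degenerate possibilities using $S\notin\Delta$. By Lemma 4.5 a half-sphere exterior domain carries at least three marked points on its bounding great circle, and a foliated digon carries at least one marked point on each of its sides; in both cases these are interior angles of $D$ equal to $\pi$, so such a domain would place $S$ in $\Delta$. Hence for $S\in\mathcal{S}_{g,n}\setminus\Delta$ every $Soul(D)$ falls under case (iii) of Lemma 4.5, i.e.\ is a nondegenerate spherical polygon, in particular of positive area and nonempty interior. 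I would then invoke spherical polarity: for a convex polygon $K$ in a closed half-sphere, $K$ has nonempty interior if and only if its dual $K^{\circ}$ is contained in an \emph{open} half-sphere (equivalently, $Soul(D)$ having nonempty interior forbids $P=Soul(D)^{\circ}$ from containing a pair of antipodal points). This upgrades the ``closed'' of Theorem 4.6 to ``open.'' The hypothesis $2g+n\geq 3$ enters precisely to remove the low-complexity surfaces — spheres with one or two singularities and the singular spheres $S_{\alpha}$ — for which this dichotomy degenerates.

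For the structural-stability assertion the plan is to show that the combinatorial data of the decomposition is locally constant. This data consists of the number of exterior domains, the cyclic word of boundary arcs bounding each one, and the triangulation of the core produced by Corollary 4.9; all of it is controlled by strict inequalities. By Theorem 4.6 the core boundary arcs have length strictly smaller than $\pi$; by the first assertion each $Soul(D)$ has positive area (hence area bounded below on the finite collection of domains) and each $P$ sits in an open half-sphere with room to spare; and by the very definition of $\Delta$ every interior angle of every exterior domain is bounded away from $\pi$. In the Gromov-Hausdorff topology the conical singularities, the geodesic arcs joining them, and the embedded half-spheres all vary continuously with $T$, so I would track each core boundary arc of $S$ and each extremal embedded half-sphere to a nearby one in $T$. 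Since all defining inequalities are strict and $T$ is assumed to remain off $\Delta$, none of these features can degenerate, and matching the corresponding pieces assembles a homeomorphism of pairs carrying $\mathcal{C}(S)$ to $\mathcal{C}(T)$.

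The main obstacle I expect is the continuity of the core and exterior-domain decomposition in the Gromov-Hausdorff topology: one must rule out that a new embedded half-sphere appears, or that existing exterior domains merge or split, under an arbitrarily small perturbation. This is exactly where avoiding $\Delta$ is essential, since the birth of a new family of embedded half-spheres occurs precisely when a bounding great circle sweeps through a conical singularity at the critical angle $\pi$, that is, when an interior angle of an exterior domain passes through $\pi$. The heart of the argument is therefore to verify that $\Delta$ is closed in $\mathcal{S}_{g,n}$ and that it is exactly the wall across which the combinatorial type of the decomposition changes; granting this, $\mathcal{S}_{g,n}\setminus\Delta$ is open and the homeomorphism type of $(T,\mathcal{C}(T))$ is constant on each of its connected components, which yields the desired neighborhood $V$.
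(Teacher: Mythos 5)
Your treatment of the first assertion is correct and essentially the paper's own argument: Lemma 4.5 and Theorem 4.6 reduce the possibilities to foliated digons, half-spheres, and complements of convex polygons in closed half-spheres, and the discriminant condition kills the first two because each forces an interior angle equal to $\pi$ (at a conical singularity on an extremal arc, respectively at a boundary marked point). The only difference is cosmetic: the paper upgrades ``closed'' to ``open'' by noting that the complement polygon has all angles strictly smaller than $\pi$, hence is convex and lies in an open half-sphere, whereas you invoke spherical polarity ($Soul(D)$ nondegenerate implies its dual lies in a cap of radius strictly less than $\pi/2$); both work.

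The stability assertion is where you have a genuine gap, and you concede it yourself: after isolating the key claim --- that $\Delta$ is closed and is exactly the wall across which the combinatorial type of the decomposition changes --- you write ``granting this'' and conclude. But that claim essentially \emph{is} the theorem: local constancy of the pair $(T,\mathcal{C}(T))$ is precisely what the neighborhood $V$ must witness, so as written the argument is circular. The concrete missing step is ruling out the birth, under an arbitrarily small perturbation, of a new embedded half-sphere at a location unrelated to the exterior domains of $S$, i.e.\ deep inside the core. Your assertion that half-spheres can only be born ``when an interior angle of an exterior domain passes through $\pi$'' is exactly what needs proof, and it does not follow from the strict inequalities you list, since those all concern the already existing exterior domains. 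The paper closes this hole with a compactness argument: a point of $\mathcal{C}(S)$ lies in no embedded half-sphere, so its injectivity radius is strictly less than $\pi/2$; since $\mathcal{C}(S)$ is compact, the injectivity radius is bounded on the core by some $m_{S}<\pi/2$, and this uniform quantitative margin persists in a Gromov--Hausdorff neighborhood, forbidding new half-spheres near the core. Combined with the finitely many strict bounds on exterior domains (boundary-arc lengths $<\pi$, interior angles $>\pi$, distances from singularities to boundary arcs) --- the part you do sketch --- this prevents exterior domains from appearing, disappearing, merging, or acquiring new boundary arcs, and yields $V$. Without the injectivity-radius bound, or an equivalent uniform estimate on the core, your proposal does not establish the stability claim.
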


\begin{proof}
Hypothesis $2g+n \geq 3$ just eliminates the case of singular spheres. Thus, Theorem 4.6 implies that $D$ is either a foliated digon of angle at least $\pi$ or a convex polygon contained in a closed half-sphere.\newline
If $S$ does not belong to the discriminant, then for any exterior domain $D$, any interior angle of $D$ is distinct from $\pi$. Extremal arcs of a foliated digon contain at least one conical singularity (otherwise they can be extended and we eliminate the case of singular surfaces for which such a family is cyclic). The interior angle at such conical singularities is $\pi$. Therefore, $D$ cannot be a foliated digon. Condition on angles also implies that $D$ cannot be a half-sphere (where interior angles are equal to $\pi$). A spherical polygon (the complement of $D$ in a spherical chart) with angles strictly smaller than $\pi$ is convex and contained in an open half-sphere.\newline

The core $\mathcal{C}(S)$ is a compact. Therefore, there is a bound $m_{S}<\frac{\pi}{2}$ such that the injectivity radius of any point of $\mathcal{C}(S)$ is at most $m_{S}$. Consequently, there is a neighborhood of $S$ in the moduli space where no embedded half-sphere can appear in the middle of the core.\newline
There are finitely many exterior domains in $S$ and they are bounded by finitely many arcs. There is neighborhood in the moduli space where the lengths if the boundary arcs stay strictly smaller than $\pi$ and where the magnitudes of interior angles stay strictly bigger than $\pi$; The maximal distance of any conical singularity to any boundary arc of an exterior domain can also be controlled. Therefore, there is a neighborhood $V$ in the moduli space where exterior domains cannot appear, disappear, merge or be modified by creation of a new boundary arc. The topological pairs formed by the embedding of the core inside the surface are thus structurally stable.
\end{proof}

The topological pair $(\mathcal{C}(S),S)$ is invariant for any deformation inside a connected component of $\mathcal{S}_{g,n} \setminus \Delta$. Discriminant defines a walls-and-chambers structure on the moduli space (and any stratum obtained by fixing the conical angle of every singularity).
It would be interesting to refine the classification of spherical surfaces with the same conical singularities by fixing also the topological pair $(\mathcal{C}(S),S)$.\newline

\textit{Acknowledgements.} The author is grateful to Alexandre Eremenko, Dmitry Novikov, Dmitri Panov and the anonymous referee for their valuable remarks. The author is supported by the Israel Science Foundation (grant No. 1167/17) and the European Research Council (ERC) under the European Union Horizon 2020 research and innovation programme (grant agreement No. 802107).\newline

\nopagebreak
\vskip.5cm
\end{document}